\documentclass[twoside]{article}

%
\usepackage[accepted]{aistats2020}
%


\setlength{\pdfpageheight}{11in}
\setlength{\pdfpagewidth}{8.5in}

\setlength\textwidth{7in}



\usepackage[utf8]{inputenc} 
\usepackage{hyperref}       
\usepackage{url}            
\usepackage{booktabs}       
\usepackage{nicefrac}       
\usepackage{microtype}      
\usepackage{bm}
\usepackage{verbatim}
\usepackage{amsthm}
\usepackage{amssymb}        
\usepackage{mydefs}         
\usepackage{algorithm}
\usepackage{algpseudocode}
\usepackage{mathtools}
\usepackage{natbib}
\usepackage{color}
\usepackage{graphicx}
\usepackage{soul}
\DeclarePairedDelimiter\ceil{\lceil}{\rceil}
\DeclarePairedDelimiter\floor{\lfloor}{\rfloor}

\begin{document}

%
\runningtitle{Accelerated Primal-Dual Algorithms for Distributed Optimization over Networks}
\runningauthor{Jinming Xu$^{\dagger,\ddagger}$, Ye Tian$^\ddagger$, Ying Sun$^\ddagger$, and Gesualdo Scutari$^\ddagger$}

%

\twocolumn[

\aistatstitle{Accelerated Primal-Dual Algorithms for Distributed
Smooth \\ Convex Optimization over Networks}

\aistatsauthor{ Jinming Xu$^{\dagger,\ddagger}$~~~~~~~~~~~~~ \And  Ye Tian$^\ddagger$, Ying Sun$^\ddagger$, and Gesualdo Scutari$^\ddagger$}
\aistatsaddress{State Key Lab of Industrial Control Technology~~~~~~~~~~~~~\\  $^\dagger$Zhejiang University~~~~~~~~~~~~~ \And School of Industrial Engineering \\ $^\ddagger$Purdue University } 
]


\begin{abstract}
This paper proposes a novel family of primal-dual-based distributed algorithms for  smooth, convex, multi-agent optimization  over  networks that uses   only gradient  information and gossip communications.  The algorithms can also employ acceleration on the computation and communications.  We provide a unified analysis of their convergence rate, measured in terms of the Bregman distance associated to the saddle point reformation of the distributed optimization problem.    When acceleration is employed, the rate is shown to be  optimal, in the sense that it matches (under the proposed metric) existing complexity lower bounds of distributed algorithms applicable to such a class of problem and using only gradient information and gossip communications. 
Preliminary numerical results on distributed least-square regression problems show that the proposed algorithm compares favorably on existing distributed schemes.\vspace{-0.2cm}
\end{abstract}

\section{Introduction}\vspace{-0.2cm}
We study distributed (smooth) convex    optimization over multi-agent networks, modeled as a  fixed, undirected graph. Agents  aim to cooperatively solve 
\vspace{-0.1cm}
\begin{equation}  
\label{prob:dop}
\min_{x\in\mathbb{R}^d} F(x):=\sum_{i=1}^m f_i(x), 
\end{equation}
where $x\in\mathbb{R}^d$  is the vector of optimization variables, shared among the $m$ agents; and $f_i:\mathbb{R}^d\rightarrow\mathbb{R}$ is the  cost function of agent $i$, assumed to be smooth, convex   and known only to that agent. We are interested in  network architectures that do not have any centralized (master) node handling  the entire optimization process or able to  gather   information from all the other agents in the system (such as master/slave architectures); 
each agent instead   controls a local estimate of the common vector $x$, which is iteratively updated based upon its local gradient and information   received from  its immediate neighbors. 
 This scenario  arises naturally from several   large-scale machine learning applications wherein  the sheer volume and spatial/temporal disparity of scattered data render centralized processing and storage infeasible or inefficient.   


 The focus of this paper is on {\it optimal rate}  decentralized algorithms for Problem (\ref{prob:dop}) that use only gradient information and gossip communications. By {\it optimal} we mean that these algorithms provably achieve  lower complexity bounds for such a class of  problems and oracle decentralized algorithms.  Primal~\citep{Duchi2012_DualAveraging,yuan2016convergence,Jakovetic2014_FastGradMethod,Nedic2014_LipschitzGrad,di2016next,nedich2016achieving,qu2017harnessing,xu2015augmented,sun2019convergence}
  and primal-dual distributed  methods~\citep{shi2015extra,shi2014linear,ling2015dlm,wei2012distributed_admm,chang2015multi} 
  applicable to Problem (\ref{prob:dop}) have been extensively studied in the literature, enjoying different convergence rates.  In general, these rates are not optimal for several reasons: i) the schemes do not employ any acceleration on the local optimization step
   and/or communications; 
  or ii) they do not balance optimally the number of optimization  and communications  steps.  
Optimal rates of first-order distributed algorithms have been recently studied in~\cite{scaman17optimal,scaman2018optimal,sun2018distributed,uribe2018dual,lan2017communication,shamir2014fundamental,arjevani2015communication} for different classes of optimization problems and network topologies; they however are not optimal or applicable to the formulation considered in this paper.\vspace{-0.2cm}

 \noindent \textbf{Related works.}  
  Optimal lower complexity bounds and matching distributed algorithms have been recently investigated in    \cite{scaman17optimal} for smooth strongly convex   functions, in  \cite{scaman2018optimal} for nonsmooth convex functions, and   in \cite{sun2018distributed} for smooth nonconvex functions.  Fully connected networks have been considered in   \cite{shamir2014fundamental,arjevani2015communication}. However, to  our knowledge, no first-order gossip algorithm  is known that achieves  {\it both}  computation  {\it and} communication lower complexity bound for the minimization of   {\it smooth convex} functions over graphs. Attempts of  designing  accelerated distributed algorithms for Problem \eqref{prob:dop} 
can be found in  \cite{li2018sharp,qu2017accelerated,uribe2018dual}   and are briefly discussed next.   The scheme in \cite{qu2017accelerated}  combines the technique of gradient tracking  \citep{di2016next,xu2015augmented,nedich2016achieving} with  Nesterov acceleration of local computations and   achieves an $\epsilon>0$ solution in   $\Upbound{{1}/{\epsilon^{5/7}}}$ gradient and communication steps, 
under the assumption that the solution set of the optimization problem \eqref{prob:dop} is compact. 
 {Algorithm 7 in \cite{uribe2018dual}   is designed for general smooth convex objectives; it reaches an $\epsilon$ solution in $\Upbound{\sqrt{L_f/(\eta\,\epsilon)} \,\log 1/\epsilon}$ outer loops of communications and $\Upbound{\sqrt{L_f/\epsilon } \log 1/\epsilon}$ inner loops of computations (per communication), resulting in an overall gradient evaluations of $\Upbound{L_f/(\epsilon\sqrt{\eta}) \, \log^2 1/\epsilon}$, which do not match existing lower bounds. 
The subsequent  work \citep{li2018sharp} proposes an accelerated  penalty-based  method with increasing penalty values;  the algorithm achieves the lower bound of $\Upbound{\sqrt{{L_f}/{\epsilon}}}$ gradient evaluations but at the cost of an {\it increasing}  number of communications per gradient evaluation (iteration)--namely: $\Upbound{\sqrt{{L_f}/\bracket{\eta\epsilon }}\log{1}/{\epsilon}}$, making it  not optimal in terms of communication steps.}  
 
\noindent \textbf{Summary of the contributions.} We propose  a novel family of primal-dual-based distributed algorithms for Problem (\ref{prob:dop}) that use {\it only gradient} information and gossip communications. The algorithms can also employ acceleration on the computation and communications.  We provide a unified analysis of their convergence rate, measured in terms of the Bregman distance associated to the saddle point reformation of (\ref{prob:dop}). 
When acceleration on both computation and communications is properly designed,  the proposed algorithms are shown to be optimal, in the sense that they match existing complexity lower bounds \citep{li2018sharp}, rewritten in terms of the Bregman distance metric.  
Furthermore, differently from \cite{scaman17optimal,uribe2018dual},
our algorithms do not  require any  information on the Fenchel conjugate of the agents'  functions, which significantly enlarge the class of functions to which provably optimal rate algorithms can be applied to. Hence, we termed our algorithms 
OPTRA (\emph{optimal conjugate-free distributed primal-dual methods}).  Our preliminary numerical results show that OPTRA compares favorably with existing distributed accelerated methods \citep{li2018sharp,qu2017accelerated,uribe2018dual} proposed for Problem (\ref{prob:dop}), which supports our theoretical findings.

 \noindent \textbf{Technical novelties.}
 {While the genesis of OPTRA finds routs in  the primal-dual algorithm \citep{chambolle2011first} and   employs Nesterov acceleration similarly to  \cite{chen2014optimal} (which also builds on \cite{chambolle2011first}),  there are some substantial differences between the proposed distributed algorithms and  the aforementioned  schemes  \citep{chambolle2011first,chen2014optimal}, which are briefly discussed next.  
The scheme in \cite{chambolle2011first} is meant for abstract saddle-point problems and so \cite{chen2014optimal} does; the focus therein is not on distributed optimization. Hence communications over networks are not explicitly accounted. Furthermore,  \cite{chambolle2011first} does not employ any acceleration while \cite{chen2014optimal} accelerates the computation   but lacks of the communication (networking) component (no gossip-based updates are present in \cite[Alg. 2]{chen2014optimal}).  
On the other hand,    OPTRA adopts Nesterov {\it and} Chebyshev acceleration to balance computation and communication,  so that   lower complexity bounds on  both are achieved (in terms of Bregman distance). This is a major novelty with respect to \cite{chambolle2011first,chen2014optimal}. Because of these differences,  the convergence analysis of OPTRA can not be deducted by that of \cite{chambolle2011first,chen2014optimal}; a novel convergence proof is provided, which  shows an  explicit   dependence of the rate on key  network parameters.
 
\noindent \textbf{Notations:}
 We use $\Null{\cdot}$ (resp.\ $\Span{\cdot}$) to denote the null space (resp.\ range space) of the matrix argument. The vector or matrix (with proper dimension) of all ones (resp.\ all zeros) is denoted by   $\ones$ (resp.\ $\zeros$);   $e_i$   denotes the $i$-th canonical vector; and the identity matrix 
  is denoted by $\mathbf{I}$; the dimensions of these vector and matrices will be clear from the context.  
 The inner product between two matrices  $\x,\y$  is defined as $\innprod{\x}{\y}:=\text{trace}(\x,\y)$ while the induced norm is $\norm{\x}:=\norm{\x}_F$; we will use the same notation for vectors, treated as special cases.     Given a positive semidefinite matrix $\mathbf{G}$, we define  $\innprod{\x}{\x'}_\G=\innprod{\G\x}{\x'}~\text{and}~\norm{\x}_\G=\sqrt{\innprod{\G\x}{\x}}.$  
 \vspace{-0.2cm}



\section{Problem formulation}
\label{gen_inst}\vspace{-0.2cm}

\subsection{Distributed optimization over networks}\vspace{-0.2cm}
We study Problem \eqref{prob:dop} under the following assumptions. 
 

\begin{assum}
\label{assum:L_f-smooth_convex} 
(i) Each cost function $f_i:\mathbb{R}^d\to \mathbb{R}$ is convex and $L_{f_i}$-smooth and 
(ii) Problem \eqref{prob:dop} has a solution.  {Define $L_f := \max_{i=1}^m L_{f_i}$}
\end{assum}
\noindent \textbf{Network  model}
Agents are embedded in a communication network, modeled as  an undirected graph $\Gh=(\Eg,\Vx)$, where $\Vx$ is the set of vertices--the agents--and $\Eg$ is the set of edges; $\{i,j\}\in\Eg$ if there is   a communication link between agent $i$ and agent $j$. We assume that the graph has no self-loops, i.e., $\{i,i\}\notin \Eg$.
 We use $\Nb_i:=\{j|\{i,j\}\in \Eg\}$ to denote the set of    neighbors of agent $i$.
\begin{dfn}[Graph Induced Matrix]
 {The symmetric matrix $\mathbf{S}=[s_{ij}]\in\mathbb{R}^{m\times m}$ is said to be induced by the graph $\Gh=(\Eg,\Vx)$ if $s_{ij}\neq 0$ only if $i=j$ or $\{i,j\}\in\Eg$. The set of such matrices is denoted by  $\mathcal{W}_\Gh$.}\vspace{-0.2cm}
\end{dfn}
Since we are interested in optimization over  networks with no centralized nodes, we will focus on distributed algorithms whereby   agents communicate with their neighbors using a suitably designed   gossip matrix. Standard assumptions on such matrices are the following. 
\begin{assum}
\label{assum:weight_matrix}
Given the graph $\mathcal{G}$, the gossip matrix $\Lap\in \mathbb{R}^{m\times m}$   satisfies:\vspace{-0.2cm}
\begin{itemize}
\item[(i)] $\Lap\in \mathcal{W}_\Gh$;
\item[(ii)] Positive semi-definiteness: $\Lap\succeq 0$, with $0=\lambda_1\leq\lambda_2\leq\lambda_3\leq...\leq\lambda_m$;
\item[(iii)] Connectivity: $\Null{\Lap}= \Span{\ones}$; \vspace{-0.2cm}
\end{itemize}
where $\{\lambda_i\}_{i=1}^m$ are the eigenvalues of $\Lap$.
\end{assum}
It is not difficult to check   a gossip matrix satisfying Assumption \ref{assum:weight_matrix} always exists if the associated graph is connected; {see, e.g.,~\cite{olfati2007consensus}. Several    gossip matrices have been  considered in the literature; we refer the reader to   \cite{xiao2004fast,nedic2018network} and references therein for specific examples.} \vspace{-0.2cm}  


\subsection{Saddle-point reformulation}

  A standard approach for solving (\ref{prob:dop}) consists in rewriting the optimization problem in the  so-called  consensus optimization form, that is \vspace{-0.2cm}\begin{equation}\label{prob:ocp}
\min_{\x\in\mathbb{R}^{m\times d}} f(\x)+\iota_{\mathcal{C}}(\x),\vspace{-0.2cm}
\end{equation}
where $\x=[x_1,x_2,...,x_m]^\top\in\mathbb{R}^{m\times d}$, with $x_i$ being the local estimate of $x$ owned by  agent $i$; $f(\x):=\sum_{i=1}^mf_i(x_i)$; and  $\iota_\mathcal{C}(\cdot)$ is the indicator function on the consensus space $\mathcal{C}:=\{\mathbf{1}_m x^\top \,|\, x\in\mathbb{R}^d\}$. Note that $\nabla f(\x)=[\nabla f_1(x_1),\nabla f_2(x_2),...,\nabla f_m(x_m)]^\top\in\mathbb{R}^{m\times d}$.



To solve Problem~\eqref{prob:ocp}, we consider the following closely related saddle point formulation
\begin{equation}\label{prob:saddle_point}
\max_{\y\in\mathbb{R}^{m\times d}}\min_{\x\in\mathbb{R}^{m\times d}} \Phi(\x,\y):=f(\x)+\innprod{\y}{\x}-\iota_{\mathcal{C}^\perp}(\y),
\vspace{-0.2cm}  
\end{equation}
where $\mathcal{C}^\perp$ is the space orthogonal to $\mathcal{C}$ and $\Phi(\x,\y)$ is the Lagrangian associated to problem~\eqref{prob:ocp}. By  Assumption~\ref{assum:L_f-smooth_convex},  strong duality holds for \eqref{prob:saddle_point}; hence,  \eqref{prob:saddle_point}  admits a primal-dual optimal solution pair $(\x^\star,\y^\star)\in\mathcal{D}:=\mathbb{R}^{m\times d}\times \mathcal{C}^\perp$ that satisfies the following KKT conditions
\vspace{-0.2cm}  
\begin{subequations}
	\label{eq:kkt_condtion}
	\begin{align}
	(\text{Lagrangian Optimality})~~~~~\y^\star&=-\nabla f(\x^\star), \label{eq:kkt_condtion_y}\\
	(\text{Primal Feasibility})~~~~~\x^\star&\in \mathcal{C}, \label{eq:kkt_condtion_x}
	\end{align}
\end{subequations}
and the saddle-point property $\Phi(\x^\star,\y)\leq\Phi(\x^\star,\y^\star)\leq\Phi(\x,\y^\star),$ for all $(\x,\y)\in\mathcal{D}$. Note that $\x^\star$ solves   Problem~\eqref{prob:ocp} and thus it is also a solution of the original formulation \eqref{prob:dop}~\citep{bertsekas2003convex}. 

Using (\ref{prob:saddle_point}) and (\ref{eq:kkt_condtion}),  one can write 
	\begin{equation}\label{eq:Bregman_Lagrangian_relation}
	\begin{aligned}
	&\Phi(\x,\y^\star)-\Phi(\x^\star,\y)=f(\x)+\innprod{\y^\star}{\x}-f(\x^\star)-\innprod{\y}{\x^\star}\\
	&\overset{\eqref{eq:kkt_condtion}}{=}f(\x)-f(\x^\star)-\innprod{\nabla f(\x^\star)}{\x-\x^\star}\overset{\Delta}{=}G(\x,\x^\star)\geq 0.
	\end{aligned}
	\end{equation} 




where $G(\x,\x^\star)$ is the   Bregman distance. The following properties of $G$ are  instrumental for our develoments (the proof is provided in the supporting material). 
\begin{prop}\label{prop:properties_G}
Let  $\x^\star$ be any optimal solution of~\eqref{prob:ocp}; the following hold for  $G$ defined in \eqref{eq:Bregman_Lagrangian_relation}:\vspace{-0.2cm}
\begin{itemize}\setlength{\itemsep}{0pt}
\item [(a)] $\bar{\x}$ is an optimal solution of \eqref{prob:ocp} if and only if $\bar{\x}\in \mathcal{C} $ and $G(\bar{\x},\x^\star)=0$;
\item [(b)] $G(\x,\bullet)$ is constant over the solution set of \eqref{prob:ocp}. 
\end{itemize}
\end{prop}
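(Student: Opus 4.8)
The plan is to reduce both statements to two elementary facts that hold at \emph{any} optimal $\x^\star$: first, the objective value $f(\x^\star)$ equals the common optimal value of \eqref{prob:ocp}; and second, the stationarity condition \eqref{eq:kkt_condtion_y} together with $\y^\star\in\mathcal{C}^\perp$ forces $\nabla f(\x^\star)=-\y^\star\in\mathcal{C}^\perp$, so that $\nabla f(\x^\star)$ is orthogonal to every element of the consensus subspace $\mathcal{C}$. The single observation I will reuse throughout is that whenever $\bar\x,\x^\star\in\mathcal{C}$ their difference lies in $\mathcal{C}$, hence $\innprod{\nabla f(\x^\star)}{\bar\x-\x^\star}=0$ and the linear term in the definition \eqref{eq:Bregman_Lagrangian_relation} of $G$ drops out.

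For part (a) I would argue both implications directly. If $\bar\x$ solves \eqref{prob:ocp}, feasibility forces $\bar\x\in\mathcal{C}$ (otherwise $\iota_\mathcal{C}(\bar\x)=+\infty$) and optimality gives $f(\bar\x)=f(\x^\star)$; substituting these into \eqref{eq:Bregman_Lagrangian_relation} and using the vanishing of the linear term yields $G(\bar\x,\x^\star)=0$. Conversely, if $\bar\x\in\mathcal{C}$ and $G(\bar\x,\x^\star)=0$, the same vanishing reduces the identity $G(\bar\x,\x^\star)=0$ to $f(\bar\x)=f(\x^\star)$; since $\bar\x$ is feasible and attains the optimal value, it solves \eqref{prob:ocp}.

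For part (b) the content is that $G(\x,\x^\star)$ is independent of which optimizer $\x^\star$ is chosen. Writing out $G(\x,\x^\star_1)-G(\x,\x^\star_2)$ for two optimal solutions, the terms $f(\x^\star_1)$ and $f(\x^\star_2)$ cancel (equal optimal values), and the surviving $\x$-dependent discrepancy is governed by $\nabla f(\x^\star_1)-\nabla f(\x^\star_2)$. The crux is therefore to show the gradient is constant over the solution set. I would establish this via co-coercivity of the $L_f$-smooth convex map $f$: for any two optimizers, $\tfrac{1}{L_f}\norm{\nabla f(\x^\star_1)-\nabla f(\x^\star_2)}^2\le\innprod{\nabla f(\x^\star_1)-\nabla f(\x^\star_2)}{\x^\star_1-\x^\star_2}$, and the right-hand side vanishes because $\nabla f(\x^\star_1)-\nabla f(\x^\star_2)\in\mathcal{C}^\perp$ while $\x^\star_1-\x^\star_2\in\mathcal{C}$. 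Hence $\nabla f(\x^\star_1)=\nabla f(\x^\star_2)$, and together with the equal objective values this makes $G(\x,\bullet)$ constant on the solution set.

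The main obstacle is precisely this gradient-invariance step in (b): each $\nabla f(\x^\star)$ is a full matrix in $\mathbb{R}^{m\times d}$ and could a priori vary across optimizers even with the objective value fixed. It is the orthogonality $\mathcal{C}\perp\mathcal{C}^\perp$ combined with co-coercivity that pins it down. A pleasant byproduct is that the argument is conjugate-free, in line with the paper's design philosophy, avoiding any appeal to uniqueness of the Fenchel-dual multiplier.
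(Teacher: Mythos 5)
Your proof is correct. Both implications in (a) and the invariance in (b) go through, and the whole argument rests, as it should, on the first-order optimality condition $\nabla f(\x^\star)\in\mathcal{C}^\perp$ (the normal cone to the subspace $\mathcal{C}$) together with $\x^\star_1-\x^\star_2\in\mathcal{C}$. The route differs from the paper's in two places. For (a) the paper simply cites \cite[Prop.~6.1.1]{bertsekas2003convex}, whereas you give the short self-contained argument; that is a harmless and arguably preferable substitution. For (b) the key lemma is genuinely different: the paper observes that $G(\cdot,\x^\star)$ is a nonnegative convex function vanishing at $\widetilde{\x}^\star$, so $\widetilde{\x}^\star$ is an \emph{unconstrained} global minimizer of it and hence $\nabla f(\widetilde{\x}^\star)-\nabla f(\x^\star)=\nabla_{\x}G(\widetilde{\x}^\star,\x^\star)=0$; you instead invoke the Baillon--Haddad co-coercivity inequality and kill the right-hand side by the orthogonality $\mathcal{C}\perp\mathcal{C}^\perp$. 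Both establish gradient invariance over the solution set, and the concluding cancellation (using $\innprod{\nabla f(\x^\star)}{\x^\star_1-\x^\star_2}=0$) is identical. The paper's variant is marginally lighter in that it needs only convexity and differentiability of $f$ (no Lipschitz constant), but it implicitly relies on part (a) to assert $G(\widetilde{\x}^\star,\x^\star)=0$ first; your variant is self-contained at the cost of invoking $L_f$-smoothness, which Assumption~\ref{assum:L_f-smooth_convex} supplies anyway. No gap in either direction.
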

\vspace{-0.2cm}
Due to (b), for notational  simplicity, in what follows, we will write  $G(\x)$ for $G(\x,\x^\star)$. 
\begin{rem} 
 In this paper we will use $G$ as metric to assess the (worst-case) convergence rate  of the proposed algorithms as well as to state lower complexity bounds. Note that, since  $f$ is   not assumed to be strictly convex,    $G(\x)=0$ does not imply $\x=\x^\star$, but  it is  only   a necessary condition for $\x$ to be optimal (cf. Proposition~\ref{prop:properties_G}(a)).  Still, $G$ is a valid merit function for both purposes above, as explained next.  First,    $G(\x)>\epsilon$ implies that  $\x$  is $\epsilon$ ``far'' away (in the $G$-measure) from {\it any}  optimal solution of \eqref{prob:ocp}; hence, a lower bound in terms of $G$ is an  informative measure. Furthermore, when it comes to the convergence rate analysis of distributed algorithms,  Proposition~\ref{prop:properties_G}-(a) legitimates the use of (the decay rate of) $G$ along the agents' iterates $\{\x^k\}_{k=0}^\infty$, as the distance of $\x^k$  {from $\mathcal{C}$ is proved to be vanishing--see Sec. \ref{sec:optimal_algo}.}   
\end{rem}\vspace{-0.3cm}

\section{Lower Complexity Bounds}\vspace{-0.2cm}

 {We recall here  existing lower complexity bounds for decentralized first-order schemes belonging to the same oracle class of the distributed algorithms we are going to introduce.} The difference from the literature is that we will write such bounds in terms of the Bregman distance $G$. We begin introducing the distributed oracle model, followed by the lower  complexity bound.  \vspace{-0.2cm}



\subsection{Decentralized first-order oracle}
\label{sec:oracle}\vspace{-0.2cm}
Given Problem \eqref{prob:dop} over the graph $\mathcal{G}$, we consider distributed algorithms wherein each agent $i$ controls a local variable $x_i\in \mathbb{R}^d$, which is an estimate of the shared optimization variable $x$ in \eqref{prob:dop}. The value of $x_i$ at (continuous) time $t\in \mathbb{R}_+$ is denoted by $x_i^{(t)}$. To update its own variable, each agent $i$: 1) has access to the gradient of its own function--we assume that the time to inquire such a gradient is normalized to one; and 2)  can communicate values (vectors in $\mathbb{R}^d$) to  (some of)  its neighbors $j\in \mathcal{N}_i$--this communication requires a time $\tau_c\in \mathbb{R}_+$ (which may be smaller or greater than one). Each update $x_i^{(t)}$ is generated by the following general \emph{black-box procedure}.

\noindent \textbf{Distributed first-order oracle $\mathcal{A}$:}
  A distributed first order iterative method generates a sequence $\big\{\x^{(t)}\big\}_{t\geq 0}$, with $\x^{(t)}\triangleq [x_1^{(t)},\ldots ,x_m^{(t)}]$, such that
\begin{equation}\label{eq:oracle}\begin{aligned}
x^{(t)}_i\in & \,\,\underset{\textbf{\small local communication}}{\underbrace{\Span{x^{(s)}_j\,|\,j\in\Nb_i\,\text{ and }\,0\leq s<t-\tau_c}}}\\ &\quad +\underset{\textbf{\small local computation}}{\underbrace{\Span{x_i^{(s)},\nabla f_i(x^{(s)}_i)\,|\,0\leq s<t-1}}},
\end{aligned}\end{equation}
for all $i\in \mathcal{V}$. We made the blanket assumption that each $x_i^0=0$,   without loss of generality. 


The   oracle  \eqref{eq:oracle} allows  each agent to use all the historical values of its local gradients  (local computations) as well as that of the decision variables received from its neighbors (local communications). Furthermore, \eqref{eq:oracle} also captures algorithms employing  multiple rounds of communications (resp. gradient computations) per gradient evaluation (resp.  communication). In the supporting material (Appendix \ref{sec:connections_gradtrack_primaldual}),  we show that the above oracle indeed accounts for most existing distributed algorithms, such as primal-dual methods \citep{shi2015extra} and gradient tracking methods \citep{di2016next,nedich2016achieving,qu2017harnessing,xu2015augmented}.

A similar black-box procedure has been introduced  in \cite{scaman17optimal} for strongly convex instances of   \eqref{prob:dop}. The difference here is that the oracle in \eqref{eq:oracle}  cannot  return the gradient of the conjugate of the $f_i$'s. The reason of considering such  ``less powerful''  methods is that, in practice, it is hard to compute the gradient of conjugate functions. This means that the gossip (dual-based)  methods in \cite{scaman17optimal} do not belong to the oracle considered in this paper. 


\subsection{Lower complexity bounds} \label{sec:LB_complexity}We state   now   lower complexity bounds  in the $G$-metric for the class of algorithms $\mathcal{A}$ applied to Problem \eqref{prob:ocp} [and thus \eqref{prob:dop}] over a connected graph $\mathcal{G}$.   In Section \ref{sec:optimal_algo} we will introduce a primal-dual distributed algorithm that indeed converges to an optimal solution of \eqref{prob:ocp}     driving $G$ to zero  at a rate that matches the  lower complexity bound (see the proofs in the supporting material).

\begin{thm}\label{thm:lowerbound_eigengap}
	Consider Problem \eqref{prob:dop} under Assumption \ref{assum:L_f-smooth_convex} and let $\Gh$  be a connected graph. For any given $\eta\in(0,1]$ and  $L_f>0$, there exists a gossip matrix $\Lap\in \mathcal{W}_\mathcal{G}$  with   eigengap $\eta\triangleq \frac{\lambda_2(\Lap)}{\lambda_m(\Lap)}$, and a set of local cost functions $\{f_i\}_{i=0}^m$, $f_i:\mathbb{R}^d\to \mathbb{R}$, with  {$f(\x)=\sum_i f_i(x_i)$} being $L_f$-smooth 
such that,   for any first-order gossip algorithm in $\mathcal{A}$ using $\Lap$, we have \vspace{-0.2cm}
	\begin{align}\label{eq:lowerbound_eigengap}
	G\big(\x^{(t)}\big) = \Lobound{\frac{  L_f R^2}{\left(\frac{t}{1+\left\lceil \frac{1}{5\sqrt{\eta}} \right\rceil \tau_c}+2\right)^2} +   \frac{R\,\norm{\nabla f(\x^\star)}}{ \frac{t}{1+ \left\lceil \frac{1}{5\sqrt{\eta}} \right\rceil \tau_c }+2}},
	\end{align} for all $t\in \left[0, \frac{d-1}{2}\left({1+\left\lceil \frac{1}{5\sqrt{\eta}} \right\rceil \tau_c}\right)\right]$, where $R\triangleq \|\x^0-\x^\star\|$. 
	Furthermore,\vspace{-0.2cm} 
	\begin{equation}\label{eq:lb_spec}
		\frac{  L_f R^2 }{t \ / \left( 1+\left\lceil \frac{1}{5\sqrt{\eta}} \right\rceil \tau_c \right)} = \Theta \left(   R \norm{\nabla f(\x^\star)}\right).
	\end{equation}
\end{thm}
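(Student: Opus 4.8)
The plan is to adapt the smooth-convex lower-bound machinery of \cite{scaman17optimal} (itself built on Nesterov's worst-case quadratics) and then re-express the resulting bound in the Bregman metric $G$. Concretely, I would exhibit one fixed pair (gossip matrix $\Lap$, separable objective $f=\sum_i f_i$) — depending only on $\eta$, $L_f$, $\tau_c$, $R$ and $d$, not on the algorithm — on which every oracle in $\mathcal{A}$ is forced to keep $G(\x^{(t)})$ above the claimed quantity. Since \eqref{eq:Bregman_Lagrangian_relation} identifies $G(\x)=f(\x)-f(\x^\star)-\innprod{\nabla f(\x^\star)}{\x-\x^\star}$, the bound in $G$ follows once I control this Bregman divergence along the reachable iterates.

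First I would build $\Lap$ from a path (linear) graph rescaled so that $\lambda_2(\Lap)/\lambda_m(\Lap)=\eta$ exactly, and split a smooth convex ``chain'' objective across the two endpoints of the path. The variable-space chain is the classical tridiagonal construction enforcing the zero-chain property — a single local gradient can activate at most one new coordinate of $x\in\mathbb{R}^d$ — while the network structure forces newly activated information to traverse the whole path before the next coordinate can be opened. To separate the two terms of \eqref{eq:lowerbound_eigengap}, I would take $f$ as a direct sum of two blocks: a pure quadratic chain, whose constrained optimum carries the $1/(K+2)^2$ term, and a second block designed so that $\nabla f(\x^\star)\in\mathcal{C}^\perp$ is nonzero, producing the consensus/dual contribution $R\,\norm{\nabla f(\x^\star)}/(K+2)$; by additivity of $G$ over the blocks the two estimates add.

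The technical heart is a propagation (dimension) lemma. Combining the oracle recursion \eqref{eq:oracle} with a Chebyshev-extremal estimate on $\Lap$, I would show that any iterate obtainable with fewer than $\lceil \tfrac{1}{5\sqrt{\eta}}\rceil$ communication rounds cannot move signal mass from one endpoint of the path to the other; hence the effective cost of opening one additional coordinate is one gradient step plus $\lceil \tfrac{1}{5\sqrt{\eta}}\rceil$ communications, i.e.\ time $1+\lceil \tfrac{1}{5\sqrt{\eta}}\rceil\tau_c$. Thus after time $t$ the iterate $\x^{(t)}$ lies in a coordinate subspace of dimension $K\le t/(1+\lceil \tfrac{1}{5\sqrt{\eta}}\rceil\tau_c)$, which stays below the number of available coordinates exactly on the stated interval $K\le (d-1)/2$. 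Restricting $G$ to this subspace and minimizing then yields \eqref{eq:lowerbound_eigengap} via the standard tridiagonal tail estimate (quadratic block) and the dual estimate (second block). I expect this lemma — realizing the \emph{exact} eigengap $\eta$ on a graph that still admits endpoint separation, and obtaining the sharp constant $5$ from an extremal polynomial bound rather than a crude $1/\eta$ diffusion estimate — to be the main obstacle.

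Finally, \eqref{eq:lb_spec} is a calibration identity. Since $R=\norm{\x^0-\x^\star}$, $\norm{\nabla f(\x^\star)}$ and the boundary value $K_{\max}=(d-1)/2$ are all explicit functions of the construction parameters (the quadratic scale, the second-block gradient magnitude and the ambient dimension $d$), I would fix the free parameters so that $L_f R^2/K=\Theta(R\,\norm{\nabla f(\x^\star)}\,)$ holds throughout the valid range; this makes the two contributions in \eqref{eq:lowerbound_eigengap} comparable, so that neither is vacuous and the stated $\Theta$-relation follows by direct substitution.
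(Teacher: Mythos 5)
Your overall skeleton — Nesterov's tridiagonal chain split across the two ends of a path graph, a zero-chain/propagation lemma, and the relation between the path length and the eigengap — is the same as the paper's. But there is a genuine quantitative gap in the construction: you place the nonzero objectives only at the \emph{two endpoints} of the path. With $m=\Theta(1/\sqrt{\eta})$ nodes and only two active agents, the consensus optimum $\x^\star=\ones (x^\star)^\top$ still satisfies $R^2=\|\x^0-\x^\star\|^2=\Theta(m(k+1))$, so the claimed bound $\Omega\bigl(L_fR^2/(k+1)^2\bigr)=\Omega\bigl(mL_f/(k+1)\bigr)$, whereas the Bregman gap actually attainable from a single split chain is only $\Theta\bigl(L_f/(k+1)\bigr)$ — short by a factor of $m$. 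Moreover $\|\nabla f(\x^\star)\|$ would then be $\Theta(L_f/\sqrt{k+1})$ rather than $\Theta(\sqrt{m}\,L_f/\sqrt{k+1})$, so the two terms of \eqref{eq:lowerbound_eigengap} would scale as $m$ and $\sqrt{m}$ respectively and the calibration identity \eqref{eq:lb_spec} could not hold. The paper repairs exactly this by placing $\lceil \zeta m\rceil$ agents (with $\zeta=1/32$) at each end of the line, each carrying a copy of the corresponding half of the chain, which multiplies the achievable Bregman gap and $\|\nabla f(\x^\star)\|^2$ by $\Theta(m)$ and makes all three quantities consistent. Your final "calibration" paragraph cannot absorb this, because the mismatch is a factor of $m$ between the two terms themselves, not a free scale.

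Two secondary points. First, a path Laplacian cannot be "rescaled so that $\lambda_2/\lambda_m=\eta$ exactly" — global rescaling preserves the ratio; the paper instead picks the number of nodes $n$ with $\eta_n\ge \eta>\eta_{n+1}$ and invokes the existence of a weighted line-graph Laplacian with the prescribed eigengap, then bounds the inter-group distance by $d_c\ge\lceil 1/(5\sqrt{\eta})\rceil$ via $\eta>\eta_{n+1}>2/(n+1)^2$. Second, the constant $5$ does not come from a Chebyshev-extremal estimate (that machinery belongs to the \emph{upper} bound); it comes from the purely combinatorial facts that one gossip round moves information by one hop and that a path with eigengap $\eta$ has $\Omega(1/\sqrt{\eta})$ nodes. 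Relatedly, your two-block direct-sum device for "separating" the two terms is unnecessary: \eqref{eq:lb_spec} asserts the two terms are of the same order for the hard instance, and the single split chain already delivers both simultaneously because the splitting forces $\nabla f(\x^\star)\in\mathcal{C}^\perp$ to be nonzero with exactly the right magnitude.
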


\begin{col} 
In the setting of  Theorem \ref{thm:lowerbound_eigengap}, the overall time needed by any first-order algorithm in $\mathcal{A}$ using the gossip matrix $\Lap$ to drive $G$ below $\epsilon>0$, 
with $f$ given in Theorem \ref{thm:lowerbound_eigengap}, is  \vspace{-0.2cm}
\begin{equation}\label{eq:Bregman_lowbound}
\Lobound{ \left(1+\frac{1}{\sqrt{\eta}}\tau_c\right)\bracket{   \sqrt{\frac{L_f R^2}{\epsilon} } + \frac{R \norm{\nabla f(\x^\star)}}{\epsilon} } }.\vspace{-0.2cm}
\end{equation}
\end{col}

Notice that, because of \eqref{eq:lb_spec},
  the lower bound \eqref{eq:Bregman_lowbound} can be equivalently stated as  \vspace{-0.2cm}
\begin{equation}\label{eq:lb_equiv}
	\Lobound{ \left(1+\frac{1}{\sqrt{\eta}}\tau_c\right)   \sqrt{\frac{L_f R^2}{\epsilon} } }.\vspace{-0.2cm}
\end{equation}

It is not difficult to check that the lower bound in terms of the traditional function-error-based metric (FEM): \begin{equation}\label{eq:FEM-metric}
\max_{i\in\mathcal{V}} (F(x_i)-\min_{x\in \mathbb{R}^d} F(x)) \end{equation} has the same expression as \eqref{eq:lowerbound_eigengap} [and thus (\ref{eq:Bregman_lowbound}) and \eqref{eq:lb_equiv}] up to some constants.  This observation is also reported in \cite{li2018sharp} without proof, and  stated formally  below for completeness (see the proofs the supporting material).

\begin{thm}[Lower bound on the FEM-metric]\label{thm:fem_lb} In the setting of  Theorem \ref{thm:lowerbound_eigengap}, the overall time needed by any first-order algorithm in $\mathcal{A}$ using the gossip matrix $\Lap$ to drive the function-error-based metric, $\max_{i\in\mathcal{V}} (F(x_i)-\min_{x\in \mathbb{R}^d} F(x)),$ below $\epsilon>0$, 
with $f$ given in Theorem \ref{thm:lowerbound_eigengap}, is bounded by (\ref{eq:Bregman_lowbound}) [or, equivalently, by (\ref{eq:lb_equiv})].
\end{thm}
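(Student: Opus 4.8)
The plan is to reuse \emph{verbatim} the hard instance constructed in the proof of Theorem~\ref{thm:lowerbound_eigengap}---the same local costs $\{f_i\}$ (hence the same global $F=\sum_i f_i$ and minimizer $x^\star$, with $\x^\star=\ones (x^\star)^\top$) and the same gossip matrix $\Lap$ of eigengap $\eta$---and to change only the merit function. The guiding observation is relation~\eqref{eq:lb_spec}: on this instance the two summands in \eqref{eq:lowerbound_eigengap} are of the same order, so the Bregman bound collapses to the single-term expression \eqref{eq:lb_equiv}, i.e.\ to the classical ``Nesterov $+$ communication'' rate $\sqrt{L_f R^2/\epsilon}$ inflated by $1+\tau_c/\sqrt{\eta}$. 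It therefore suffices to show the FEM obeys this same single-term bound, after which \eqref{eq:lb_spec} lets me rewrite it in the two-term form \eqref{eq:Bregman_lowbound} exactly as for $G$.

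The first step is to reduce the max-over-agents FEM (which evaluates the \emph{global} $F$ at each \emph{local} estimate) to a single centralized optimality gap. Since $F$ is convex, Jensen's inequality gives, for any iterate $\x^{(t)}=[x_1^{(t)},\dots,x_m^{(t)}]$,
\begin{equation*}
\max_{i\in\mathcal{V}}\left(F(x_i^{(t)})-F^\star\right)\ \geq\ \frac{1}{m}\sum_{i=1}^m\left(F(x_i^{(t)})-F^\star\right)\ \geq\ F(\bar{x}^{(t)})-F^\star,\qquad \bar{x}^{(t)}:=\frac{1}{m}\sum_{i=1}^m x_i^{(t)},
\end{equation*}
with $F^\star=\min_{x}F(x)$. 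Thus it is enough to lower bound the centralized gap of the average iterate $\bar{x}^{(t)}$.

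I would then invoke the support-restriction argument already established for Theorem~\ref{thm:lowerbound_eigengap}: because of the topology realizing the eigengap $\eta$ together with the communication delay $\tau_c$, every local iterate $x_i^{(t)}$ produced by an oracle in $\mathcal{A}$ is confined, after $t$ units of time, to a ``staircase'' subspace whose deepest reachable coordinate index grows only like $t/(1+\lceil 1/(5\sqrt{\eta})\rceil\tau_c)$. As a convex combination, $\bar{x}^{(t)}$ lies in the union of these supports, whose depth is the maximum single-agent depth and hence is bounded by the same quantity. Applying to $F$ the standard Nesterov lower bound for $L_f$-smooth convex functions restricted to such a subspace yields
$F(\bar{x}^{(t)})-F^\star=\Lobound{ L_f R^2 / \left(\tfrac{t}{1+\lceil 1/(5\sqrt{\eta})\rceil\tau_c}+2\right)^2 }$,
which is precisely \eqref{eq:lb_equiv}; converting the target accuracy $\epsilon$ into a time budget and using \eqref{eq:lb_spec} reproduces \eqref{eq:Bregman_lowbound}.

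I expect the main obstacle to be the passage from the distributed metric to the centralized one without losing the communication factor. Concretely, one must verify that (i) the global function $F$ built from the $\{f_i\}$ is genuinely Nesterov-hard, so that restricting to coordinates up to index $\approx t/(1+\lceil 1/(5\sqrt{\eta})\rceil\tau_c)$ forces the claimed gap, and (ii) averaging cannot help---the union of the per-agent supports reaches no deeper coordinate than the deepest single agent, which is exactly what the convexity bound above exploits to rule out cancellation among the per-agent errors. A secondary technical point is to keep explicit track of how $\tau_c$ and $\eta$ enter the effective iteration count $t/(1+\lceil 1/(5\sqrt{\eta})\rceil\tau_c)$, so that the resulting FEM bound matches \eqref{eq:Bregman_lowbound} (equivalently \eqref{eq:lb_equiv}) up to absolute constants, as claimed.
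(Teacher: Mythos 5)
Your proposal is correct and follows essentially the same route as the paper's proof: reuse the hard instance of Theorem~\ref{thm:lowerbound_eigengap}, confine the iterates to the truncated subspace $\mathbb{R}^{k,d}$ with $k\lesssim t/(1+\lceil 1/(5\sqrt{\eta})\rceil\tau_c)$, lower-bound the restricted optimality gap of the (Nesterov-hard) global objective by $\Theta(L_f R^2/(k+1)^2)$, and invoke the $\Theta$-equivalence \eqref{eq:lb_spec} to recover both forms \eqref{eq:Bregman_lowbound} and \eqref{eq:lb_equiv}. The only difference is your Jensen/averaging detour, which is harmless but superfluous: since each $x_i^{(t)}$ individually lies in $\mathbb{R}^{k,d}$, the paper bounds $\max_{i}(F(x_i^{(t)})-F^\star)\geq \min_{y\in\mathbb{R}^{k,d}}F(y)-F^\star$ directly without passing through the average iterate.
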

 


\begin{rem}[Balancing computations \& communications]\label{rmk:lower_bound}
The above lower bounds tell us that one cannot reach an $\epsilon$-solution of \eqref{prob:ocp} (measured either in terms of the $G$ or FEM-metrics) in less than $\Upbound{\sqrt{ L_f R^2/{\epsilon}}+ R \| \nabla f (\x^\star)\|/{\epsilon}}$ computing time and $\Upbound{ \tau_c/\sqrt{\eta} \cdot \bracket{\sqrt{ L_f R^2/\epsilon}+R \| \nabla f (\x^\star)\|/\epsilon }}$ communication time for the worst-case problem as stated in Theorem~\ref{thm:lowerbound_eigengap} (see Eq.~\eqref{eq:cost_func_two-agent} in the supporting material for a concrete example). Since the time for a single gradient evaluation has been normalized to one, the former  lower bound corresponds also to the overall number of gradient evaluations while the overall communication steps read  $\Lobound{1/\sqrt{\eta} \cdot \bracket{\sqrt{ L_f R^2/{\epsilon}}+R \| \nabla f (\x^\star)\|/{\epsilon}}}$. 
This sheds light also on the optimal balance between computation and communication: the optimal number of communication steps per gradient evaluations is $\lceil 1/\sqrt{\eta}\rceil$ (in the worst case). 
In the next section,   we introduce a distributed, gossip-based algorithm that achieves lower complexity bounds in the  $G$-metric.\vspace{-0.2cm}
\end{rem}
 

\section{Distributed primal-dual algorithms}
\label{sec:optimal_algo}\vspace{-0.2cm}
\subsection{A general primal-dual scheme}\vspace{-0.2cm} 
A gamut   of primal-dual algorithms has been  proposed in the literature to solve Problem~\eqref{prob:ocp} in a centralized setting; see, e.g.,  \cite{condat2013primal,chambolle2011first} and references therein for  details. Building on  \cite{condat2013primal,chambolle2011first}, here, 
we propose a general primal-dual algorithm to solve the saddle point problem~\eqref{prob:saddle_point} in a {\it distributed} manner. The algorithm reads: given $\x^k$ and $\y^k$ at iteration  $k$,}
\begin{subequations}\label{alg:g-pd-ATC}
\begin{align}
\x^{k+1}&=\mathbf{A}(\x^k-\gamma(\nabla f(\x^k)+\hat{\y}^k)),\label{alg:g-pd-ATC_x}\\
\y^{k+1}&=\y^k+\tau\mathbf{B}\x^{k+1}, \label{alg:g-pd-ATC_y}\\
\hat{\y}^{k+1}&=\y^{k+1}+(\y^{k+1}-\y^k), \label{alg:g-pd-ATC_haty}
\end{align}
\end{subequations}
where $\y^k$ is the dual vector variable;   $\gamma$ and $\tau$ are the primal and dual step-sizes common to all  the agents;  and $\mathbf{A},\mathbf{B}\in\mathbb{R}^{m\times m}$ 
satisfy the  following assumption.
\begin{assum}\label{assum:A_B_weight_matrices}
 The weight matrices  $\mathbf{A},\mathbf{B}\in\mathbb{R}^{m\times m}$ in (\ref{alg:g-pd-ATC}) are such that   \vspace{-0.3cm}
\begin{enumerate}
 \item[(i)]  $\mathbf{A}=\mathbf{A}^\top$, $\zeros\preceq\mathbf{A}\preceq \I$,  and  $\Null{\I-\mathbf{A}}\supseteq\Span{\mathbf{1}}$;   
  \item[(ii)]  $\mathbf{B}=\mathbf{B}^\top$,  $\mathbf{B}\succeq\zeros$, and $\Null{\mathbf{B}}=\Span{\mathbf{1}}$. 
\end{enumerate}
\end{assum}

\begin{rem}
  Several choices for $\mathbf{A}$ and $\mathbf{B}$ satisfying Assumption \ref{assum:A_B_weight_matrices} are possible, 
resulting in a gamut of specific   algorithms, obtained as   instances of  (\ref{alg:g-pd-ATC}). Note that, when $\mathbf{A}$ and $\mathbf{B}$ satisfy also Assumption \ref{assum:weight_matrix}, all these algorithms are implementable over the graph $\mathcal{G}$. Several examples of such distributed algorithms 
are discussed in details in Appendix~\ref{sec:connections_gradtrack_primaldual}. Here, we only mention that  the gradient tracking methods \citep{di2016next,nedich2016achieving,qu2017harnessing,xu2015augmented}  and  primal-dual methods, such as EXTRA~\citep{shi2015extra}, are all special cases of  (\ref{alg:g-pd-ATC}); the former schemes are obtained   setting $\mathbf{A}=\W^2$ and $\mathbf{B}=(\mathbf{\I-\W})^2$, where $\W\in \mathcal{W}_{\mathcal{G}}$ is the weight matrix used by the agents to employ the consensus step; and EXTRA is obtained  setting $\mathbf{A}=\W$ and $\mathbf{B}=\I-\W$.

 We begin studying convergence of the general primal-dual algorithm (\ref{alg:g-pd-ATC}), under the following tuning of the free parameters:\vspace{-0.2cm}
\begin{equation}\label{eq:Alg-general-setting}
\gamma=\frac{\nu}{\nu L_f+ 1},\quad   \tau=\frac{1}{\nu \lambda_m(\mathbf{B})},\quad (1-\gamma L_f)\I-\gamma\tau\mathbf{B} \succeq \zeros, \quad \end{equation} 
  where  
  $\lambda_m(\mathbf{B})$ is the largest eigenvalue of $\mathbf{B}$.  


\end{rem}

 
\begin{thm}\label{thm:sublinear_rate_g-pd}
Consider Problem \eqref{prob:dop} under Assumption \ref{assum:L_f-smooth_convex}.  Given  $(\x^1,\y^1)$, let $\{(\x^k,\y^k)\}_{k=1}^{\infty}$ be the sequence generated by Algorithm \eqref{alg:g-pd-ATC}, under Assumption~\ref{assum:A_B_weight_matrices} and the setting in \eqref{eq:Alg-general-setting}. Define $\bar{\x}^k:=\frac{1}{k-1}\sum_{t=2}^k \x_t$ and $R\triangleq \|\x^1 -\x^\star\|$. 
Then, the following hold: (i)  $\{\x^k\}_{k=0}^{\infty}$   converges to an optimal solution $\x^\star$ of \eqref{prob:ocp} [thus $\x^\star=\mathbf{1} x^\star$, for some solution  $x^\star$ of  \eqref{prob:dop}]; therefore $\lim_{k\to \infty} G(\x^k)= 0$; and (ii)    the number of iterations needed for $G(\bar{\x}^k)$ to go below $\epsilon>0$ is\footnote{We use $\eta(\B)$ to denote the eigengap of $\B$.}
\begin{equation}
\label{eq:rate-Alg-general}
\Upbound{\frac{L_f R^2}{\epsilon}+\frac{1}{\sqrt{\eta(\mathbf{B})}}\frac{R \| \nabla f (\x^\star)\|}{\epsilon}}.
\end{equation}
\end{thm}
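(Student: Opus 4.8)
The plan is to run an ergodic primal--dual analysis in the spirit of \cite{chambolle2011first,chen2014optimal}, but with the reference point fixed at an optimal pair $(\x^\star,\y^\star)$; this choice is what lets the mixing matrix $\mathbf{A}$ and the singular coupling $\mathbf{B}$ be handled cleanly. The central object is the one-step gap $Q^{k+1}:=\Phi(\x^{k+1},\y^\star)-\Phi(\x^\star,\y^{k+1})$. Since $\y^{k+1}\in\mathcal{C}^\perp$, $\x^\star\in\mathcal{C}$, and $\innprod{\y^\star}{\x^\star}=0$, this collapses to $Q^{k+1}=f(\x^{k+1})-f(\x^\star)+\innprod{\y^\star}{\x^{k+1}-\x^\star}$; and by convexity of $\Phi(\cdot,\y^\star)$, linearity of $\Phi(\x^\star,\cdot)$, and the relation \eqref{eq:Bregman_Lagrangian_relation}, Jensen's inequality gives $\frac1N\sum_{k=1}^N Q^{k+1}\ge G(\bar\x^{N+1})$. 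Thus it suffices to bound $\sum_k Q^{k+1}$ from above by the initial distances.

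First I would derive the primal inequality. Writing $\tilde\x^{k+1}:=\x^k-\gamma(\nabla f(\x^k)+\hat\y^k)$ so that $\x^{k+1}=\mathbf{A}\tilde\x^{k+1}$, I combine $L_f$-smoothness and convexity of $f$ at $\x^\star$ to get $f(\x^{k+1})-f(\x^\star)\le \innprod{\nabla f(\x^k)}{\x^{k+1}-\x^\star}+\frac{L_f}{2}\norm{\x^{k+1}-\x^k}^2$, then substitute the gradient-step identity $\nabla f(\x^k)+\hat\y^k=\frac1\gamma(\x^k-\tilde\x^{k+1})$. The subtle point is the leftover term $\frac1\gamma\innprod{(\mathbf{A}-\I)\tilde\x^{k+1}}{\x^{k+1}-\x^\star}$ produced by the combine step: using $\mathbf{A}\x^\star=\x^\star$ (hence $\x^{k+1}-\x^\star=\mathbf{A}(\tilde\x^{k+1}-\x^\star)$) and $\mathbf{A}^2\preceq\mathbf{A}$, it equals $\frac1\gamma\innprod{(\mathbf{A}^2-\mathbf{A})\tilde\x^{k+1}}{\tilde\x^{k+1}}\le 0$ and is dropped. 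The three-point identity on the remaining term then yields
\begin{equation*}
Q^{k+1}\le \innprod{\y^\star-\hat\y^k}{\x^{k+1}-\x^\star}+\tfrac1{2\gamma}\big(\norm{\x^k-\x^\star}^2-\norm{\x^{k+1}-\x^\star}^2\big)-\tfrac1{2\nu}\norm{\x^{k+1}-\x^k}^2,
\end{equation*}
where $\tfrac1\gamma-L_f=\tfrac1\nu$ was used.

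Next I would convert the bilinear term into a telescoping dual quantity. Since $\y^\star-\hat\y^k\in\mathcal{C}^\perp$ and $\mathbf{B}$ is invertible on $\mathcal{C}^\perp$, the dual step $\y^{k+1}-\y^k=\tau\mathbf{B}\x^{k+1}$ gives $\mathbf{P}_{\mathcal{C}^\perp}(\x^{k+1}-\x^\star)=\frac1\tau\mathbf{B}^\dagger(\y^{k+1}-\y^k)$, so that $\innprod{\y^\star-\hat\y^k}{\x^{k+1}-\x^\star}=\frac1\tau\innprod{\y^\star-\hat\y^k}{\y^{k+1}-\y^k}_{\mathbf{B}^\dagger}$. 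Substituting $\hat\y^k=\y^k+(\y^k-\y^{k-1})$ and applying the three-point identity produces a telescoping pair $\frac1{2\tau}\big(\norm{\y^\star-\y^k}_{\mathbf{B}^\dagger}^2-\norm{\y^\star-\y^{k+1}}_{\mathbf{B}^\dagger}^2\big)$, a diagonal term $\frac\tau2\norm{\x^{k+1}-\x^\star}_{\mathbf{B}}^2$, and a cross term $-\tau\innprod{\x^k-\x^\star}{\x^{k+1}-\x^\star}_{\mathbf{B}}$; expanding the latter via $-\innprod{a}{b}=\frac12\norm{a-b}^2-\frac12\norm{a}^2-\frac12\norm{b}^2$ cancels the diagonal term and leaves $\frac\tau2\norm{\x^{k+1}-\x^k}_{\mathbf{B}}^2-\frac\tau2\norm{\x^k-\x^\star}_{\mathbf{B}}^2$. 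Combining with the primal remainder, the $\norm{\x^{k+1}-\x^k}$ contributions assemble into $-\tfrac12\innprod{\x^{k+1}-\x^k}{(\tfrac1\nu\I-\tau\mathbf{B})(\x^{k+1}-\x^k)}$, which is $\le 0$ precisely because $(1-\gamma L_f)\I-\gamma\tau\mathbf{B}=\gamma(\tfrac1\nu\I-\tau\mathbf{B})\succeq\zeros$ under \eqref{eq:Alg-general-setting}. This is exactly where the step-size condition is consumed, and the careful bookkeeping of these extrapolation cross terms (together with the pseudoinverse metric $\mathbf{B}^\dagger$ on the singular $\mathbf{B}$) is the main obstacle I anticipate.

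Summing $k=1,\dots,N$ with the convention $\hat\y^1=\y^1$ (so boundary cross terms vanish) and discarding the nonpositive residuals gives $\sum_k Q^{k+1}\le \frac1{2\gamma}R^2+\frac1{2\tau}\norm{\y^\star-\y^1}_{\mathbf{B}^\dagger}^2$. For part (ii) I divide by $N$, use $\norm{\cdot}_{\mathbf{B}^\dagger}^2\le\frac1{\lambda_2(\mathbf{B})}\norm{\cdot}^2$ on $\mathcal{C}^\perp$ together with $\y^\star=-\nabla f(\x^\star)$ and the values of $\gamma,\tau$ in \eqref{eq:Alg-general-setting}, obtaining $G(\bar\x^{N+1})\le\frac1N\big[\frac{L_f}{2}R^2+\frac1{2\nu}R^2+\frac{\nu}{2\eta(\mathbf{B})}\norm{\nabla f(\x^\star)}^2\big]$; minimizing over the free parameter $\nu$ balances the last two terms at $\nu=R\sqrt{\eta(\mathbf{B})}/\norm{\nabla f(\x^\star)}$ and yields the stated rate \eqref{eq:rate-Alg-general}. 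For part (i), keeping the nonpositive residuals shows the potential $V^k:=\frac1{2\gamma}\norm{\x^k-\x^\star}^2+\frac1{2\tau}\norm{\y^\star-\y^k}_{\mathbf{B}^\dagger}^2$ is nonincreasing and that $\sum_k\norm{\x^k-\x^\star}_{\mathbf{B}}^2<\infty$ and $\sum_k\norm{\x^{k+1}-\x^k}^2<\infty$; hence the iterates are bounded, $\x^k$ approaches $\mathcal{C}$, and $\y^{k+1}-\y^k\to\zeros$. A standard Fej\'er/Opial argument then identifies every cluster point as a KKT point \eqref{eq:kkt_condtion} of \eqref{prob:ocp} and upgrades subsequential to full convergence $\x^k\to\x^\star$, whence $G(\x^k)\to 0$ by continuity.
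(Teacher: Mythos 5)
Your proof is correct and takes essentially the same route as the paper's: the paper obtains the identical telescoping bound $\sum_{k}\bigl(\Phi(\x^k,\y^\star)-\Phi(\x^\star,\y^\star)\bigr)\le\tfrac{1}{2\gamma}R^2+\tfrac{1}{2\tau\lambda_2(\mathbf{B})}\|\nabla f(\x^\star)\|^2$ by specializing its accelerated-scheme analysis (Lemma~\ref{lem:fundamental_inequality_II}) to $\theta_k\equiv 1$, and your handling of the mixing term via $\mathbf{A}^2\preceq\mathbf{A}$ and of the singular coupling via $\mathbf{B}^\dagger$ coincides with the paper's $h(\cdot)=\tfrac{1}{2\gamma}\|\cdot\|^2_{\mathbf{A}-\mathbf{A}^2}$ and $(\mathbf{B}+\mathbf{J})^{-1}$ devices, with the step-size condition and the Jensen-plus-$\nu$-balancing consumed in the same places. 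The only (inconsequential) imprecision is that with $\tau=1/(\nu\lambda_m(\mathbf{B}))$ the matrix $\tfrac{1}{\nu}\I-\tau\mathbf{B}$ is merely positive semidefinite, so $\sum_k\|\x^{k+1}-\x^k\|^2$ is summable only in that seminorm (asymptotic regularity still follows by combining this with $\|\x^k-\x^\star\|_{\mathbf{B}}\to 0$), which does not affect the cluster-point argument for part (i) that the paper itself only defers to a citation.
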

\vspace{-0.2cm}
The proof of the theorem can be found in the supporting material. Note that the convergence rate (\ref{eq:rate-Alg-general}) does not match the lower bound   given in Theorem \ref{thm:lowerbound_eigengap}. For instance, consider as concrete example the choice $\mathbf{A}=\mathbf{I}-\mathbf{L}$ and $\mathbf{B}=\mathbf{L}$; and let  $\tau_c\in \mathbb{R}_+$ (resp. 1) be the time  for each agent to perform a single communication to its neighbors (resp. gradient evaluation).  
The time complexity of the  primal-dual algorithm~\eqref{alg:g-pd-ATC} becomes  $$\Upbound{(1+\tau_c) \left(\frac{L_f R^2}{\epsilon}+\frac{1}{\sqrt{\eta}}\frac{R \| \nabla f (\x^\star)\|}{\epsilon}\right)}.\vspace{-0.2cm}$$ 

To match the lower lower bound given in Theorem \ref{thm:lowerbound_eigengap}, our next step is   accelerating the algorithm, both the computational part and the communication step; we leverage     Nesterov acceleration \citep{nesterov2013introductory} for the optimization step while employ  Chebyshev polynomials \citep{auzinger2011iterative}  to accelerate   communications. To provide some insight of our construction, we begin with the former acceleration;  the latter is added in Section~\ref{sec:cheb}.\vspace{-0.2cm} 

\subsection{Accelerated primal-dual algorithms}
We  accelerate the primal-dual algorithm \eqref{alg:g-pd-ATC} as follows: \vspace{-0.3cm}
\begin{subequations}\label{eq:opt_primal-dual}
\begin{align}
\u^{k+1}&=\mathbf{A}(\x^k-\gamma(\nabla f(\x^k)+\hat{\y}^k)),\label{eq:opt_primal-dual_u}\\
\x^{k+1}&=\u^{k+1}+\alpha_k(\u^{k+1}-\u^k),\label{eq:opt_primal-dual_x}\\
\hat{\x}^{k+1}&=\sigma_k\x^{k+1}+(1-\sigma_k)\u^{k+1}\label{eq:opt_primal-dual_hatx}\\
\y^{k+1}&=\y^k+\tau_k \mathbf{B} \hat{\x}^{k+1},\label{eq:opt_primal-dual_y}\\
\hat{\y}^{k+1}&=\y^{k+1}+\beta_k(\y^{k+1}-\y^k), \label{eq:opt_primal-dual_haty}
\end{align}
\end{subequations}
where $\u^k,$ $\hat{\x}^k,$ and $\hat{\y}^k$ are auxiliary variables and $\alpha_k,\sigma_k,\tau_k,\beta_k$ are parameters to be properly chosen.  Roughly speaking,   \eqref{eq:opt_primal-dual_u},~\eqref{eq:opt_primal-dual_y} and \eqref{eq:opt_primal-dual_haty} are the standard primal-dual steps while \eqref{eq:opt_primal-dual_x} and \eqref{eq:opt_primal-dual_hatx} are the extra steps meant for the acceleration, with  \eqref{eq:opt_primal-dual_x} being the standard Nesterov momentum step and \eqref{eq:opt_primal-dual_hatx} being a correction step. Note that  setting $\alpha_k\equiv 0,\sigma_k\equiv 1,\tau_k\equiv \tau,\beta_k\equiv 1$, the  algorithm   reduces to the   primal-dual method~\eqref{alg:g-pd-ATC}.  
We provide next an instance of \eqref{eq:opt_primal-dual} that is suitable for a distributed implementation.

Let $T$ be the overall number of iterations being carried out. The free  parameters in \eqref{eq:opt_primal-dual} is chosen as follows: 
\begin{equation}\label{eq:alg_param_set}\begin{aligned}
&\mathbf{A}=\I-\Lap/\lambda_m(\Lap), \,\, \mathbf{B}=\Lap/\lambda_m(\Lap),\,\, \gamma=\frac{\nu}{\nu L_f+ T},
\\
&\tau=\frac{1}{\nu T\,\lambda_m(\mathbf{B})}\,\,, \frac{1}{\theta_k}=\frac{1+\sqrt{1+4(\frac{1}{\theta_{k-1}})^2}}{2} \text{ with } \theta_1 = 1, \\
& \sigma_k=\frac{1}{\theta_{k+1}},\,\,\alpha_k=\frac{\theta_{k+1}}{\theta_k}-\theta_{k+1},\,\, \beta_k=\frac{\tau_{k+1}}{\tau_k},\,\,  \tau_k=\frac{\tau}{\theta_k }.\end{aligned}
\end{equation}

 
The resulting scheme is summarized in  Algorithm~\ref{alg:acc_primal-dual}, and its convergence properties are stated in Theorem \ref{thm:opt-pd-upperbound}. 
 {We point out that Theorem \ref{thm:opt-pd-upperbound}, although stated for Algorithm \ref{alg:acc_primal-dual}, can be readily extended to the more general accelerated primal-dual scheme (\ref{eq:opt_primal-dual}),  with other   choices of $\mathbf{A}$ and $\mathbf{B}$  just  satisfying Assumption~\ref{assum:A_B_weight_matrices}.} 

\begin{algorithm}[htpb]
\caption{OPTRA-N}\label{alg:acc_primal-dual} 
 {\bf Input}: number of iterations $T$, Laplacian matrix $\Lap$, $\nu>0$ \\
 {\bf Output}: $(\u^T,\y^T)$ \\
{\bf Initialization}: $y_i^1=0, \forall i\in\Vx$ and $\theta_1=1$ 
\begin{algorithmic}[1]
\State $\hat{\y}^1 = \tau_1 \mathbf{B}  \x^1${, $\u^1 = \x^1$}
\For {$k=1,2,...,T$}
\State compute $\theta_k$ according to~\eqref{eq:alg_param_set},
\For {$\forall i\in\Vx$} in parallel
\State compute the next iterate according to \eqref{eq:opt_primal-dual}, using the tuning as in~\eqref{eq:alg_param_set},
\EndFor
\EndFor
\State {\bf Return} $(\u^T,\y^T)$
\end{algorithmic}
\end{algorithm}

\begin{thm}\label{thm:opt-pd-upperbound}
 {
Consider Problem \eqref{prob:dop} under Assumption \ref{assum:L_f-smooth_convex};   
let $\u^{(t)}$  be the value of the $\u$-vector generated by Algorithm~\ref{alg:acc_primal-dual} at time $t\in \mathbb{R}_+$,  {under Assumptions~\ref{assum:weight_matrix} and \ref{assum:A_B_weight_matrices}}, 
  and  the parameter setting   in~\eqref{eq:alg_param_set}.    
If $\nu=\sqrt{\eta}$, then\vspace{-0.1cm} 
\[
G(\u^{(t)}) = \Upbound{ \frac{L_f R^2}{\left(\frac{t}{1+\tau_c} \right)^2}+\frac{ R^2 + \norm{\nabla f(\x^\star)}^2}{\sqrt{\eta}\frac{t}{1+\tau_c} }}.\vspace{-0.1cm}
\]
If one can set $\nu =  \Upbound{\sqrt{\eta}R /\norm{\nabla f(\x^\star)}},$ the above bound can be improved to\vspace{-0.1cm}
\begin{equation}\label{eq:rate-acc-Nest}
G(\u^{(t)}) = \Upbound{ \frac{L_f R^2}{\left(\frac{t}{1+\tau_c} \right)^2}+\frac{ R \norm{\nabla f(\x^\star)}}{\sqrt{\eta}\frac{t}{1+\tau_c} }}. 
\end{equation}
Furthermore, the consensus error decays as
\begin{align}
	&\norm{\left(\I-\avector\right)\u^{(t)}}=\nonumber\\
&\qquad\Upbound{ \frac{L_f R^2}{\norm{\nabla f(\x^\star)} \left(\frac{t}{1+\tau_c} \right)^2}+\frac{ R^2 + \norm{\nabla f(\x^\star)}^2}{\norm{\nabla f(\x^\star)} \sqrt{\eta}\frac{t}{1+\tau_c} }}.
\end{align}

}
\end{thm}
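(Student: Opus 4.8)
The plan is to track the Lagrangian gap $\Phi(\u^{k+1},\y)-\Phi(\x,\y^{k+1})$ generated by \eqref{eq:opt_primal-dual} and to exploit the identity \eqref{eq:Bregman_Lagrangian_relation}, which at the saddle point gives $\Phi(\u^{k+1},\y^\star)-\Phi(\x^\star,\y^{k+1})=G(\u^{k+1})$. First I would establish a one-step inequality. For the primal block \eqref{eq:opt_primal-dual_u}--\eqref{eq:opt_primal-dual_x} I combine the $L_f$-smoothness of $f$ along $\u^{k+1}-\x^k$ with convexity, and for the dual ascent \eqref{eq:opt_primal-dual_y} I use the three-point (cosine) identity; together these yield, for every $(\x,\y)$,
\begin{equation*}
\tfrac{1}{\theta_k}\big(\Phi(\u^{k+1},\y)-\Phi(\x,\y^{k+1})\big)\le \big(P_k-P_{k+1}\big)(\x)+\big(D_k-D_{k+1}\big)(\y)-C_k+E_k,
\end{equation*}
where $P_k$ gathers the primal distances scaled by $1/(2\gamma)$, $D_k$ the dual distances scaled by $1/(2\tau_k)$, $C_k$ the coupling terms coming from the bilinear part $\innprod{\y}{\x}$, and $E_k$ the second-order residual from smoothness. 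The role of the correction step \eqref{eq:opt_primal-dual_hatx} and of the momentum step \eqref{eq:opt_primal-dual_x} is precisely to recast $C_k$ as a telescoping quantity once $\sigma_k,\alpha_k,\beta_k,\tau_k$ are fixed as in \eqref{eq:alg_param_set}.

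Next I would multiply by the accelerating weights and sum over $k=1,\dots,T$. The recursion $1/\theta_k=(1+\sqrt{1+4/\theta_{k-1}^2})/2$ is equivalent to $1/\theta_k^2-1/\theta_k=1/\theta_{k-1}^2$, and this is what lets consecutive gap terms combine through the momentum, so that after summation only the final gap survives, carrying the weight $1/\theta_T^2=\Theta(T^2)$. Setting $(\x,\y)=(\x^\star,\y^\star)$, dropping the nonnegative terminal distances, and checking the step-size feasibility $(1-\gamma L_f)\I-\gamma\tau_k\mathbf{B}\succeq\zeros$ for the largest step $\tau_T=\tau/\theta_T$ (which the tuning \eqref{eq:alg_param_set} guarantees for all $T\ge 1$), I arrive at
\begin{equation*}
\frac{1}{\theta_T^2}\,G(\u^T)\ \le\ \frac{R^2}{2\gamma}+\frac{\norm{\y^1-\y^\star}^2}{2\,\tau\,\lambda_2(\mathbf{B})}.
\end{equation*}
The eigenvalue $\lambda_2(\mathbf{B})$ (the smallest nonzero one, relevant because the dual iterates live in $\mathcal{C}^\perp=\Span{\ones}^\perp$) appears in the dual term, while the stability of the dual step is governed by $\lambda_m(\mathbf{B})$; the ratio of the two is what produces the network dependence.

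It then remains to substitute the tuning. Using $1/\theta_T=\Theta(T)$, $\gamma=\nu/(\nu L_f+T)$, $\tau=1/(\nu T\lambda_m(\mathbf{B}))$, $\lambda_m(\mathbf{B})=1$, $\lambda_2(\mathbf{B})=\eta$, and $\norm{\y^1-\y^\star}=\norm{\nabla f(\x^\star)}$ (from $\y^1=\zeros$ and $\y^\star=-\nabla f(\x^\star)$ by \eqref{eq:kkt_condtion_y}) gives
\begin{equation*}
G(\u^T)=\Upbound{\frac{L_f R^2}{T^2}+\frac{R^2}{\nu T}+\frac{\nu\,\norm{\nabla f(\x^\star)}^2}{\eta\,T}}.
\end{equation*}
Choosing $\nu=\sqrt{\eta}$ makes the last two terms equal to $(R^2+\norm{\nabla f(\x^\star)}^2)/(\sqrt{\eta}\,T)$, giving the first claim; minimizing those two terms over $\nu$ gives $\nu=\sqrt{\eta}\,R/\norm{\nabla f(\x^\star)}$ and the improved rate $R\,\norm{\nabla f(\x^\star)}/(\sqrt{\eta}\,T)$. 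Since one iteration costs one gradient evaluation and $\Upbound{1}$ gossip rounds, i.e.\ time $\Theta(1+\tau_c)$, we have $T=\Theta(t/(1+\tau_c))$, which turns both displays into the stated bounds on $G(\u^{(t)})$.

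For the consensus error I would reuse the telescoped bound with $\x=\x^\star$ but a perturbed dual $\y=\y^\star+\rho\,v$, where $v\in\mathcal{C}^\perp$ is the unit vector along $(\I-\avector)\u^T$; the bilinear term then exposes $\rho\,\norm{(\I-\avector)\u^T}$ on the left, while the right gains only $\Upbound{\rho^2/(\tau\lambda_2(\mathbf{B}))}$ plus a cross term. Taking $\rho=\Theta(\norm{\nabla f(\x^\star)})$ and dividing through shows $\norm{(\I-\avector)\u^T}=\Upbound{G(\u^T)/\norm{\nabla f(\x^\star)}}$, which is exactly the stated decay. The main obstacle is the one-step inequality together with the cross-term telescoping: one must verify that the specific momentum and correction parameters in \eqref{eq:alg_param_set} collapse all coupling terms $C_k$ and smoothness residuals $E_k$ into a single telescoping potential while preserving the accelerated weight $1/\theta_T^2$ on the primal, and simultaneously keep $\lambda_2(\mathbf{B})$ and $\lambda_m(\mathbf{B})$ separate so that the $1/\sqrt{\eta}$ dependence is tight; once this is in place the substitutions above are routine.
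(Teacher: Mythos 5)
Your proposal follows essentially the same route as the paper's proof: a one-step primal--dual inequality (Lemmas~\ref{lem:fundamental_inequality}--\ref{lem:fundamental_inequality_II} in the appendix), telescoping via the recursion $1/\theta_k^2-1/\theta_k=1/\theta_{k-1}^2$ to obtain $T^2 G(\u^T)=\Upbound{R^2/\gamma+\norm{\y^\star}^2/(\tau\lambda_2(\mathbf{B}))}$, the same substitution of the tuning and optimization over $\nu$, and the same test-dual trick for the consensus error (the paper takes $\y=2\norm{\y^\star}\,(\I-\avector)\u^T/\norm{(\I-\avector)\u^T}$, which is your $\y^\star+\rho v$ argument in a slightly different guise). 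The only nit is that the step-size feasibility condition actually needed is $(1-\gamma L_f)\I-\frac{\gamma\tau}{\theta_k^2}\mathbf{B}\succeq\zeros$ (with $\theta_k^2$ rather than $\theta_k$, because of the $1/\theta_k^2$ reweighting), which the tuning in \eqref{eq:alg_param_set} still guarantees for all $k\leq T-1$.
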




While the convergence time of Algorithm~\ref{alg:acc_primal-dual} benefits from 
the  Nesterov acceleration of the computation step, it is not optimal in terms of communications (optimal dependence on $\eta$). In fact, when the network is poorly connected, the  second term on the RHS of   (\ref{eq:rate-acc-Nest}) becomes dominant with respect to the first one, and (\ref{eq:rate-acc-Nest}) overall will be  larger than  \eqref{eq:lowerbound_eigengap}.  This is due to the fact that Algorithm~\ref{alg:acc_primal-dual} performs   a   one-consensus-one-gradient update while the lower bound shows an optimal ratio of  $\lceil 1/\sqrt{\eta}\rceil$ in the worst case (cf. Remark \ref{rmk:lower_bound}).  This optimal ratio can be achieved  accelerating also the communication step, as described in the next section.\vspace{-0.2cm}

\subsection{Optimal primal-dual algorithms with Chebyshev acceleration}\label{sec:cheb}\vspace{-0.2cm}
We employ the acceleration of the communication step in Algorithm \ref{alg:acc_primal-dual}  by replacing the  gossip matrix $\Lap$ with $P_K(\Lap)$, where $P_K(\cdot)$ is a polynomial of at most $K$ degree that maximizes the eigengap of $P_K(\Lap)$. This   
leads to a widely used acceleration scheme known as Chebyshev acceleration and the choice $P_K(x) = 1 - T_K(c_1(1-x))/T_K(c_1)$, with  $c_1=(1+\eta(\Lap))/(1-\eta(\Lap))$ and $T_K(\cdot)$, are the Chebyshev polynomials \citep{auzinger2011iterative}.  It is not difficult to check that such a  $P_K(\Lap)$ is still a gossip matrix. Using  in~\eqref{eq:opt_primal-dual} the following setting: \begin{equation}\label{eq:Ceby-acc}\begin{aligned}
&\mathbf{A}=\I-c_2P_K(\Lap),\,\,\mathbf{B}=P_K(\Lap), \,\,K=\left\lceil 1/\sqrt{\eta(\Lap)}\right\rceil,
\, \text{with}\\ & c_2 =  { \bracket{1+2\frac{c_0^K}{(1+c_0^{2K})}}}^{-1}, \,\, c_0 = \frac{1-\sqrt{\eta(\Lap)}}{1+\sqrt{\eta(\Lap)}}, \end{aligned}\end{equation} 
leads to the  distributed scheme  described in 
Algorithm~\ref{alg:opt_primal-dual},  whose convergence rate achieves the lower bound \eqref{eq:Bregman_lowbound}, as   proved in Theorem \ref{thm:opt-pd-upperbound-cheby} below. 
 Although  the idea of using  Chebyshev polynomial   has been already used in some (centralized and distributed)   algorithms   in the literature  \citep{auzinger2011iterative,scaman17optimal},  Algorithm~\ref{alg:opt_primal-dual} substantially differs from that of \citet{scaman17optimal}, which assumes strongly-convex cost functions and is not rate-optimal in the setting considered in this paper (cf.~Sec.~\ref{sec_discussion_scaman} in the supporting material for more details).


\begin{thm}\label{thm:opt-pd-upperbound-cheby}
 {
Consider Problem \eqref{prob:dop} under Assumption \ref{assum:L_f-smooth_convex}; 
let $\u^{(t)}$   be the value of the $u$-vector generated by Algorithm~\ref{alg:opt_primal-dual} at time $t\in \mathbb{R}_+$,  under Assumptions~\ref{assum:weight_matrix} and \ref{assum:A_B_weight_matrices}, 
the parameter setting    in~\eqref{eq:alg_param_set}, and employing  the Chebyshev acceleration   \eqref{eq:Ceby-acc}.   
If $\nu = 1$, then
\[
G(\u^{(t)}) = \Upbound{ \frac{L_f R^2}{\left(\frac{t}{1+\ceil{\frac{1}{\sqrt{\eta}}}\tau_c} \right)^2}+\frac{ R^2 + \norm{\nabla f(\x^\star)}^2}{\frac{t}{1+\ceil{\frac{1}{\sqrt{\eta}}}\tau_c} }}.
\]
If one can set  $\nu = \Upbound{{R}/{\norm{\nabla f(\x^\star)}}},$ the above bound can be improved to\vspace{-0.3cm}
\[
G(\u^{(t)}) = \Upbound{ \frac{L_f R^2}{\left(\frac{t}{1+ \ceil{\frac{1}{\sqrt{\eta}}} \tau_c} \right)^2}+\frac{ R \norm{\nabla f(\x^\star)}}{\frac{t}{1+ \ceil{\frac{1}{\sqrt{\eta}}}\tau_c} }}.
\]
Furthermore, the consensus error $\norm{(\I-\avector)\u^{(t)}}$ decays as
\[
\Upbound{ \frac{L_f R^2}{\norm{\nabla f(\x^\star)} \left(\frac{t}{1+ \ceil{\frac{1}{\sqrt{\eta}}} \tau_c} \right)^2}+\frac{ R^2 + \norm{\nabla f(\x^\star)}^2}{\norm{\nabla f(\x^\star)} \frac{t}{1+\ceil{\frac{1}{\sqrt{\eta}}} \tau_c} }}.
\]
}
\end{thm}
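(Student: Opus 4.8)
The plan is to reduce Theorem~\ref{thm:opt-pd-upperbound-cheby} to Theorem~\ref{thm:opt-pd-upperbound} by regarding the Chebyshev-accelerated scheme as an instance of the Nesterov-accelerated algorithm~\eqref{eq:opt_primal-dual} run on the \emph{preconditioned} gossip matrix $P_K(\Lap)$. Concretely, with the choice~\eqref{eq:Ceby-acc}, one outer iteration of Algorithm~\ref{alg:opt_primal-dual} coincides with one iteration of Algorithm~\ref{alg:acc_primal-dual} in which the pair $(\mathbf{A},\mathbf{B})$ is taken to be $(\I-c_2 P_K(\Lap),\,P_K(\Lap))$. Hence Theorem~\ref{thm:opt-pd-upperbound}, whose rate is phrased through the eigengap $\eta(\mathbf{B})$ of the dual matrix, applies \emph{verbatim} at the level of the iteration counter $k$, provided we (i) verify that $(\I-c_2P_K(\Lap),P_K(\Lap))$ satisfies Assumption~\ref{assum:A_B_weight_matrices}, and (ii) control the improved eigengap $\eta(P_K(\Lap))$.

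First I would check the assumptions. Since $P_K$ is a polynomial, $P_K(\Lap)$ is symmetric and diagonalizes in the eigenbasis of $\Lap$; by construction $P_K(0)=0$ while $P_K$ sends the nonzero eigenvalues of $\Lap$ into the positive reals, so $\Null{P_K(\Lap)}=\Span{\ones}$ and $P_K(\Lap)\succeq\zeros$, i.e. Assumption~\ref{assum:A_B_weight_matrices}(ii) holds. For the primal matrix, the normalizing constant $c_2$ in~\eqref{eq:Ceby-acc} is tuned precisely so that $0\leq c_2 P_K(\lambda)\leq 1$ on the spectrum of $\Lap$, which yields $\zeros\preceq\mathbf{A}\preceq\I$ together with $\Null{\I-\mathbf{A}}\supseteq\Span{\ones}$, i.e. Assumption~\ref{assum:A_B_weight_matrices}(i).

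The heart of the argument, and the step I expect to be the main obstacle, is the eigengap estimate: with $K=\ceil{1/\sqrt{\eta(\Lap)}}$ the preconditioned matrix must satisfy $\eta(P_K(\Lap))=\Theta(1)$, i.e. the eigengap is lifted from $\eta$ all the way to a universal constant. This is the classical Chebyshev speed-up. Writing $\tilde{\lambda}=\lambda/\lambda_m(\Lap)\in[\eta,1]$ for the normalized nonzero eigenvalues and using $T_K(z)=\tfrac{1}{2}\big((z+\sqrt{z^2-1})^K+(z+\sqrt{z^2-1})^{-K}\big)$ for $z\geq 1$ together with the identity $c_1+\sqrt{c_1^2-1}=1/c_0$, one bounds $1-P_K(\lambda)=T_K(c_1(1-\tilde{\lambda}))/T_K(c_1)$ uniformly over $\tilde{\lambda}\in[\eta,1]$, collapsing the image of the nonzero spectrum onto a band whose ratio $\lambda_2(P_K(\Lap))/\lambda_m(P_K(\Lap))$ is bounded below by an absolute constant as soon as $K\geq 1/\sqrt{\eta}$. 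The delicate point is the careful tracking of the constants, so that the gain is genuinely $\Theta(1)$ and not merely a bounded improvement, and that the same bookkeeping confirms $c_2$ keeps $\mathbf{A}$ inside $[\zeros,\I]$.

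Finally I would assemble the rate. Feeding $\eta(\mathbf{B})=\Theta(1)$ into Theorem~\ref{thm:opt-pd-upperbound} (with $\nu=1$, the counterpart of $\nu=\sqrt{\eta(\mathbf{B})}$, now of constant order, and with $\nu=\Upbound{R/\norm{\nabla f(\x^\star)}}$ for the sharpened variant) removes the $1/\sqrt{\eta}$ factor from the first-order terms, giving an iteration-counter bound $G(\u^k)=\Upbound{L_f R^2/k^2+(R^2+\norm{\nabla f(\x^\star)}^2)/k}$ (resp. with $R\norm{\nabla f(\x^\star)}/k$). It then remains to translate iterations into wall-clock time: each outer iteration performs one gradient evaluation (unit time) together with one application of $P_K(\Lap)$, realized through the three-term Chebyshev recurrence using $K=\ceil{1/\sqrt{\eta}}$ rounds of gossip (time $K\tau_c$). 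Thus by time $t$ the algorithm has completed $k=t/(1+\ceil{1/\sqrt{\eta}}\tau_c)$ outer iterations; substituting this $k$ into the iteration bounds yields the three displayed rates, and the consensus-error estimate follows identically from the corresponding bound in Theorem~\ref{thm:opt-pd-upperbound}.
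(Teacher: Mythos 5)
Your proposal is correct and follows essentially the same route as the paper's proof: Algorithm~\ref{alg:opt_primal-dual} is treated as an instance of~\eqref{eq:opt_primal-dual} with $(\mathbf{A},\mathbf{B})=(\I-c_2P_K(\Lap),P_K(\Lap))$, Assumption~\ref{assum:A_B_weight_matrices} is verified for this pair, the Chebyshev min–max bound $\max|P_K(x)-1|\le 2c_0^K/(1+c_0^{2K})=\delta$ with $K=\ceil{1/\sqrt{\eta}}$ shows the nonzero spectrum collapses to $[1-\delta,1+\delta]$ with $\sqrt{(1+\delta)/(1-\delta)}\le 2.5$ (i.e.\ the effective eigengap becomes a universal constant), and the rate of Theorem~\ref{thm:opt-pd-upperbound} is then invoked with the per-iteration cost $1+K\tau_c$. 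The only cosmetic difference is your normalization of the spectrum to $[\eta,1]$ versus the paper's preprocessing to $[1-c_1^{-1},1+c_1^{-1}]$, which is immaterial to the argument.
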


 
According to Theorem~\ref{thm:opt-pd-upperbound-cheby}, given $\epsilon>0,$ the time needed by the algorithm to  drive $G$ below  $\epsilon>0$  is   
\[
\Upbound{\left(1+ \frac{1}{\sqrt{\eta}}\tau_c\right)\bracket{\sqrt{\frac{L_f R^2}{\epsilon}}+\frac{R \| \nabla f (\x^\star)\|}{\epsilon} }},
\]
matching the lower complexity bound given in \eqref{eq:Bregman_lowbound}.

\begin{algorithm}[!ht]
\caption{OPTRA}\label{alg:opt_primal-dual}
{\bf Input}: number of iterations $T$, Laplacian matrix $\widetilde{\Lap}$, number of inner consensus $K = \ceil*{\frac{1}{\sqrt{\eta(\Lap)}}}$, $c_0 = \frac{1-\sqrt{\eta(\widetilde{\Lap})}}{1+\sqrt{\eta(\widetilde{\Lap})}}$, 
$c_1 = \frac{1+\eta(\widetilde{\Lap})}{1- \eta(\widetilde{\Lap})}$, $c_2 = 1/ \bracket{1+2\frac{c_0^K}{1+c_0^{2K}}  }$, $\tau=\frac{c_2}{\nu T  } $, $\gamma=\frac{\nu}{\nu L_f+ T}$, $\nu>0$. \\
{\bf Initialization}: $\y^1=\zeros $; \qquad  {\bf Preprocessing}: $\Lap = \frac{2}{\lambda_2(\widetilde{\Lap}) + \lambda_n(\widetilde{\Lap})} \widetilde{\Lap}.$\\
{\bf Output}: $(\u^T,\y^T)$
\begin{algorithmic}[1]
\State $\hat{\y}^1 = \tau_1 \cdot \Call{AccGossip}{\x^{1}, \Lap, K},\, \u^1 =\x^1$
\For {$k=1,2,...,T$}
\State $\u^{k+\frac{1}{2}}= \x^k-\gamma\bracket{\nabla f(\x^k)+\hat{\y}^k}$,
\State $\u^{k+1} = \u^{k+\frac{1}{2}} - c_2 \cdot \Call{AccGossip}{\u^{k+\frac{1}{2}}, \Lap, K}$, 
\State $\x^{k+1} =\u^{k+1} + \left( \frac{\theta_{k+1}}{\theta_{k}}-\theta_{k+1} \right) (\u^{k+1}-\u^k)$,
\State $\hat{\x}^{k+1} =\frac{1}{\theta_{k+1}} \x^{k+1}+  \left(1-\frac{1}{\theta_{k+1}} \right)  \u^{k+1}$,
\State $\y^{k+1}=\y^k+\frac{\tau}{\theta_k} \Call{AccGossip}{\hat{\x}^{k+1}, \Lap, K}$, 
\State $\hat{\y}^{k+1} =\y^{k+1}+\frac{\theta_k}{\theta_{k+1}}(\y^{k+1}-\y^k)$,
\EndFor
\State {\bf Return} $(\u^T,\y^T)$.
 \item[]
 \Procedure{AccGossip}{$\x, \Lap, K$}
 \State $a_0 = 1, a_1 = c_1 $
 \State $\z_0 = \x, \z_1 = c_1(\I - \Lap) \x$
 \For {$k = 1$ to $K-1$}
 \State $a_{k+1} = 2c_1 a_k - a_{k-1}$
 \State $\z_{k+1} = 2c_1 (\I- \Lap) \z_{k} - \z_{k-1}$
 \EndFor
 \State \Return $\z_0 - \frac{\z_K}{a_K}$
\EndProcedure
\end{algorithmic}
\end{algorithm}

\vspace{-0.2cm}

Note that the optimality is stated in terms of the G-metric and does not imply that the algorithm is rate optimal also in the FEM-metric (\ref{eq:FEM-metric}), which to date remains an open question. In our experiments (cf. Sec. \ref{sec:simulation})  we observed i) the same behavior of the two errors as a function of the total number of computations and communications; and ii) that Algorithm 2 in fact outperforms existing distributed schemes.  

\vspace{-0.2cm}

\begin{figure*}[ht]
\centering{\includegraphics[scale=0.38]{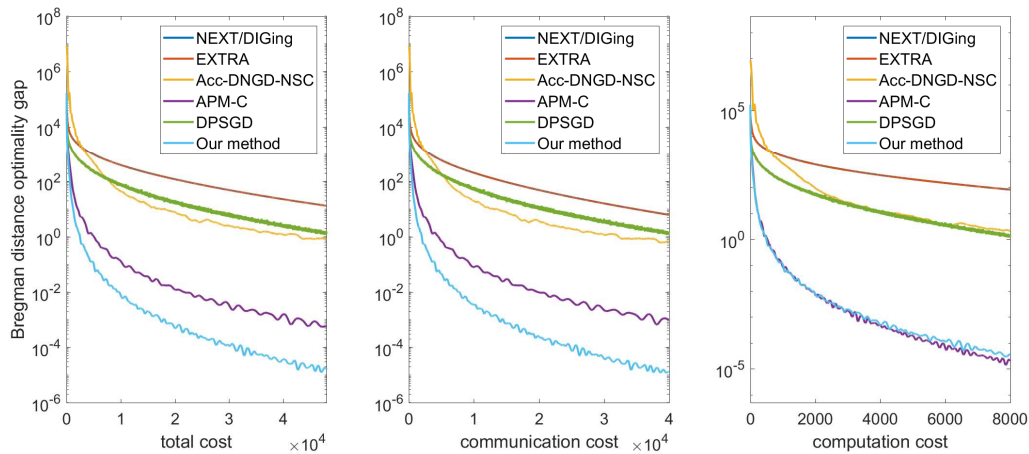}}

\vspace{-0.2cm}
\caption{Comparison of distributed first-order gradient algorithms:    Bregman distance versus  the total cost (left panel), the communication cost (middle panel), and the gradient computation cost (right panel).  The curves of DIGing/NEXT overlap with that of EXTRA.  }\label{fig:supplement_sim}\vspace{-0.4cm}
\end{figure*}

\section{Numerical Results}\label{sec:simulation}\vspace{-0.2cm}
We report here some preliminary numerical results\footnote{Code: \href{https://github.com/YeTian-93/OPTRA}{https://github.com/YeTian-93/OPTRA}.} validating our theoretical findings. We compare the  proposed rate-optimal algorithm--OPTRA--with existing accelerated ones designed for convex smooth problems, i.e., Acc-DNGD-NSC \citep{qu2017accelerated} and APM-C \citep{li2018sharp}. We also included non-accelerated schemes that perform quite well in practice, i.e., i) the  gradient tracking method, NEXT/DIGing \citep{di2016next,nedich2016achieving}; ii) the primal-dual method, EXTRA \citep{shi2015extra};
and iii) the decentralized stochastic gradient method, DPSGD \citep{lian2017can}. 

We tested the above algorithms on a decentralized   
 linear regression problem, in the form 
$
\min_{x \in \mathbb{R}^d} \norm{\mathbf{A}x - \mathbf{b}}^2,$
where $\mathbf{A} = [\mathbf{A}_1; \mathbf{A}_2;\cdots; \mathbf{A}_m] \in \mathbb{R}^{mr \times d}$ and $\mathbf{b} = [\mathbf{b}_1; \mathbf{b}_2;\cdots; \mathbf{b}_m] \in \mathbb{R}^{mr \times 1}$, with $\mathbf{A}_i \in \mathbb{R}^{r\times d}$ and $\mathbf{b}_i \in \mathbb{R}^{r\times 1}$,   $r = 10$, $d = 500$, and $m=20$.  Note that each agent $i$ can only   access  the data $(\mathbf{A}_i,\mathbf{b}_i).$  We generated the matrix  $\mathbf{A}$ of the feature vectors    according to the following  procedure, proposed  in \cite{agarwal2010fast}.  We first generate a random matrix $\mathbf{Z}$ with each entry   i.i.d. drawn from $\mathcal{N}(0,1).$  Using a control parameter $\omega \in [0,1)$, we   generate columns of $\mathbf{A}$ ($\mathbf{M}_{:,i}$ and $\mathbf{M}_{i,:}$  denote the $i$-th column and $i$-th row of a matrix $\mathbf{M}$, respectively) so that the first column is  $\mathbf{A}_{:,1} = \mathbf{Z}_{:,1}/\sqrt{1 - \omega^2}$ and the rest are recursively set as $\mathbf{A}_{:,i} = \omega \mathbf{A}_{:,i-1} + Z_{:,i}$, for $i = 2,\ldots, d$. As   result, each row $\mathbf{A}_{i,:}\in \mathbb{R}^d$ is a Gaussian random vector and its covariance matrix $\Sigma = \text{cov}(\mathbf{A}_{:,i})$ is the identity matrix if $\omega = 0$ and becomes extremely ill-conditioned as $\omega \to 1;$ we set $\omega=0.95$.  Finally we generate $x_0 \in \mathbb{R}^d$ with each entry   i.i.d. drawn  from $\mathcal{N}(0,1)$, and set $\mathbf{b}= \mathbf{A}x_0 + {\bm \xi},$ where each component of the noise ${\bm \xi}$ is  i.i.d. drawn from $\mathcal{N}(0,0.25).$  We simulated a network of  $m = 20$ agents, connected over a communication graph,   generated by the Erd\"{o}s-R\'{e}Tyi model; the probability of having an edge between any
two nodes is set to  0.1.   We calculated $L_f$ from the generated data and used the exact value whenever it is needed.  
 We tuned the free parameters of the simulated algorithms  manually to achieve the best practical performance for each algorithm. This leads to the following choices:   \textbf{i)}  the step size of DIGing/NEXT and EXTRA is set to  $10^{-5};$  \textbf{ii)} for Acc-DNGD-NSC, we used the fixed step-size rule, with   $\eta = 0.005/L_f$ (the one provided in \cite[Th.~5]{qu2017accelerated} is too conservative, resulting in poor practical performance);   \textbf{iii)} for APM-C, we set (see notation therein) $T_k = \ceil{c\cdot ({\log k}/{\sqrt{1-\sigma_2(\mathbf{W}))}}}$, with  $c = 0.2$ and $\beta_0 = 10^4;$  {\textbf{iv)} for DPSGD, we set the step size to $10^{-5}$; at each iteration, the gradient of each agent was computed using $20\%$ of the samples in the  local data set;}  and \textbf{v)} for OPTRA, we set $\nu = 100$ and $K = 2.$ 
 
 Our experiments are reported in   Figure~\ref{fig:supplement_sim}, where we plot the Bregman distance versus the overall number of communications and   computations performed by each agent (left panel), the number of communications (middle panel), and the number of  computations (right panel).  The  time   for local communications and gradient computations using all the local data samples is normalized to one;  for DPSGD, the computation time unit is scaled proportionally to the size of the local mini-batch. The plots in terms of the more traditional FEM-metric are reported in the supporting material, the behavior is consistent with the results in Figure~\ref{fig:supplement_sim}.  

The following comments are in order.  The accelerated schemes and the stochastic algorithm--DPSGD--converge faster than the non-accelerated schemes--NEXT/DIGing, EXTRA (the curves of EXTRA and NEXT/DIGing coincide in all the panels).  In our experiments,    we observed that this gap is quite evident when problems are ill-conditioned. 
From the right panel,  one can see that APM-C performs better than OPTRA and Acc-DNGD-NSC  in terms of the number of  gradient evaluations, which is expected since APM-C employs an increasing number of communication steps per  gradient evaluation. On the other hand,  APM-C suffers from high communication cost (which is evident from the middle panel), making it not competitive with respect to OPTRA in terms of communications.   When both  communication and computation costs are  considered (left panel),  OPTRA outperforms all the  other simulated schemes,   which support our theoretical findings. \vspace{-0.3cm}

\section{Conclusion}\vspace{-0.3cm}
We  studied   distributed gossip first-order methods for smooth convex optimization over networks. 
We  provided a novel  primal-dual distributed algorithm that employs Nesterov acceleration on the optimization step and acceleration of the communication step via Chebyshev  polynomials, balancing thus computation and communication. We also proved that the algorithm achieves the lower complexity bound in the Bregman distance-metric. Preliminary numerical results showed that the proposed scheme outperforms existing distributed algorithms proposed for the same class of problems.  An open question, currently under investigation, is whether the proposed distributed algorithms are rate optimal also in terms of the FEM metric. No such an algorithm is known so far in the literature. 

\section*{Acknowledgments}
This work has been supported by the following grants: NSF of USA under Grants CIF 1719205 and CMMI 1832688; in part by the Army Research Office under Grant W911NF1810238; and in part by NSF of China under Grants U1909207 and 61922058.
\newpage
\bibliographystyle{apa}
\bibliography{reference}


 \newpage 
 
 \onecolumn
 
\appendix
\begin{center}
\LARGE{\textsc{\LARGE Supporting Material -- Appendix}}
\end{center}
This document serves as supporting material of the paper entitled ``Accelerated Primal-Dual Algorithms for Distributed
Smooth Convex Optimization over Networks'' and contains all the proofs of the main results in the paper.

\section{Review of existing distributed algorithms and their connections}
\label{sec:connections_gradtrack_primaldual}
This section shows the generality of the first-order oracle $\mathcal{A}$ in \eqref{eq:oracle}    and the proposed distributed   primal-dual algorithmic framework \eqref{alg:g-pd-ATC} by casting several existing   distributed algorithms in the oracle form \eqref{eq:oracle} and algorithmic form  \eqref{alg:g-pd-ATC}. 
\subsection{Some distributed optimization methods}
\paragraph{Distributed gradient methods} One of the first distributed algorithms for Problem \eqref{prob:dop}  was proposed in the seminal work~\cite{nedic2009distributed} and called Distributed Gradient Algorithm (DGD). DGD employing constant step-size can be written in compact form as:
\begin{equation}\label{alg:grad}
\x^{k+1}=\W\x^k-\gamma\nabla f(\x^k),
\end{equation}
where $\W\in\mathcal{W}_\Gh$.  Defining $\x^{(t_k)}=\x^k$, DGD  can be rewritten in a piece-wise continuous form as 
\begin{equation}\label{alg:grad_vt}
\begin{aligned}
\x^{(t_{k+1})}&=\W\x^{(t_k)}-\gamma\nabla f(\x^{(t_k)}),\\
\x^{(t)}&=\x^{(t_k)},~t_k\leq t< t_{k+1},
\end{aligned}
\end{equation}
which  is an instance of the oracle $\mathcal{A}$.
\paragraph{Distributed gradient tracking methods} 
The distributed gradient tracking algorithm,  first proposed in \cite{di2016next,xu2015augmented} and further analyzed in \cite{nedich2016achieving,qu2017harnessing}, reads
\begin{subequations}\label{alg:gradtracking}
\begin{align}
\x^{k+1}&=\W\x^k-\gamma\y^k \label{alg:gradtracking_x}\\
\y^{k+1}&=\W\y^k+\nabla f(\x^{k+1})-\nabla f(\x^k) \label{alg:gradtracking_y}
\end{align}
\end{subequations}
where $\y_k$ is an auxiliary variable aiming at tracking the gradient of the sum-cost function. The above algorithm is proved to converge at linear rate to a solution of  Problem~\eqref{prob:ocp},  under proper conditions on the stepsize $\gamma$. To show its relationship to the oracle, we first rewrite  \eqref{alg:gradtracking} absorbing the tracking variable  $\y$, which yields
\begin{equation*}
\label{eq:gradtracking_equi_form_x}
\x^{k+2}=2\W\x^{k+1}-\W^2\x^k-\gamma(\nabla f(\x^{k+1})-\nabla f(\x^k)),
\end{equation*}
with $\x^1=\W\x^0-\gamma\nabla f(\x^0)$. It is clear that the gradient tracking algorithm   belongs to the oracle $\mathcal{S}$, as each iteration $k$ only involves the historical neighboring information and local gradients at $k-1$ and $k-2$. 
\paragraph{Distributed primal-dual methods} 
Distributed primal-dual algorithms can be generally written in the following form~\cite{shi2015extra}
\begin{subequations}\label{alg:primal-dual}
\begin{align}
\x^{k+1}&=\W\x^k-\gamma\nabla f(\x^k)-\y^k \label{alg:primal-dual_x}\\
\y^{k+1}&=\y^k+(\I-\W)\x^{k+1} \label{alg:primal-dual_y}
\end{align}
\end{subequations}
where $\y_k$ is the dual variable. When $\y^0=\zeros$, the algorithm \ref{alg:primal-dual} can  solve  problem~\eqref{prob:ocp}. Evaluating \eqref{alg:primal-dual_x} at $k+1$ and substituting it into \eqref{alg:primal-dual_y} yields
\begin{equation}
	\label{eq:primal-dual-reqritten}
\x^{k+2}=2\W\x^{k+1}-\W\x^k-\gamma(\nabla f(\x^{k+1})-\nabla f(\x^k)),
\end{equation}
with $\x^1=\W\x^0-\gamma\W\nabla f(\x^0)$. It is easy to check that \eqref{eq:primal-dual-reqritten}   belongs to the oracle $\mathcal{A}$. \begin{rem}
There are some other distributed algorithms that do not belong to the categories above  such as \cite{chen2012diffusion}. However, using similar arguments as above, one can show that they are instances of the oracle $\mathcal{A}$.
\end{rem}
\subsection{Connections between gradient tracking and primal-dual methods} 
We  reveal here an interesting connection between  primal-dual methods and  gradient tracking based methods. More specifically,  setting in \eqref{alg:g-pd-ATC_x} $\mathbf{A}=\W^2$ and $\mathbf{B}=(\mathbf{\I-\W})^2$, one can easily recover gradient tracking methods from the primal-dual ones. To simplify the presentation, we consider a slightly different form of \eqref{alg:g-pd-ATC_x}, i.e., 
\begin{subequations}\label{alg:g-pd-ATC-simplified}
\begin{align}
\x^{k+1}&=\W^2(\x^k-\gamma(\nabla f(\x^k))+(\I-\W)\y^k,\label{alg:g-pd-ATC-simplified_x}\\
\y^{k+1}&=\y^k+(\I-\W)\x^{k+1}. \label{alg:g-pd-ATC_y}
\end{align}
\end{subequations}
Then, from \eqref{alg:g-pd-ATC-simplified_x}, we have at iteration $k+1$
\[
\x^{k+2}=\W^2\x^{k+1}-\gamma\W^2\nabla f(\x^{k+1})-(\I-\W)\y^{k+1}
\]
Subtracting \eqref{alg:g-pd-ATC-simplified_x} from the above equation we have
\[
\begin{aligned}
\x^{k+2}-\x^{k+1}&=\W^2\x^{k+1}-\W^2\x^k-\gamma\W^2(\nabla f(\x^{k+1})-\nabla f(\x^k))-(\I-\W)(\y^{k+1}-\y^k)\\
&=\W^2\x^{k+1}-\W^2\x^k-\gamma\W^2(\nabla f(\x^{k+1})-\nabla f(\x^k))-(\I-\W)^2\x^{k+1}\\
&=2\W\x^{k+1}-\x^{k+1}-\W^2\x^k-\gamma\W^2(\nabla f(\x^{k+1})-\nabla f(\x^k)).
\end{aligned}
\]
Rearranging terms leads to
\[
\x^{k+2}-\W\x^{k+1}=\W(\x^{k+1}-\W\x^k)-\gamma\W^2(\nabla f(\x^{k+1})-\nabla f(\x^k)).
\]
Let $-\gamma\W\y^k=\x^{k+1}-\W\x^k$ and suppose $\W$ is invertible. Then, we have
\begin{subequations}\label{alg:augdgm}
\begin{align}
\x^{k+1}&=\W(\x^k-\gamma\y^k) \label{alg:augdgm_x}\\
\y^{k+1}&=\W(\y^k+\nabla f(\x^{k+1})-\nabla f(\x^k)) \label{alg:augdgm_y}
\end{align}
\end{subequations}
which is exactly the standard gradient tracking method in the ATC form \citep{di2016next,xu2015augmented}. 

\section{Proof of Proposition \ref{prop:properties_G}}\label{sec:properties_G}
Statement (a) is a direct result of    \cite[Prop. 6.1.1]{bertsekas2003convex}. We prove next statement (b). Suppose that there are two optimal solutions $\x^\star$ and $\widetilde{\x}^\star$ such that
\[
\nabla f(\x^\star),\, \nabla f(\widetilde{\x}^\star) \in\mathcal{C}^\perp, \qquad 
\x^\star,\,\widetilde{\x}^\star \in\mathcal{C}\qquad \text{and} \qquad f(\x^\star)=f(\widetilde{\x}^\star).
\]
Since $G(\x,\x^\star)=f(\x)-f(\x^\star)-\innprod{\nabla f(\x^\star)}{\x-\x^\star}\geq 0$  for all $\x\in\mathbb{R}^{m\times d}$, and $G(\widetilde{\x}^\star,\x^\star)=0$,   $\widetilde{\x}^\star$ is the global minimizer of   $G$. Hence, it must be   $\nabla f(\x^\star)=\nabla f(\widetilde{\x}^\star)$, implying  
\[
\begin{aligned}
G(\x,\x^\star)&=f(\x)-f(\x^\star)-\innprod{\nabla f(\x^\star)}{\x-\x^\star}\\
&=f(\x)-f(\widetilde{\x}^\star)-\innprod{\nabla f(\widetilde{\x}^\star)}{\x-\widetilde{\x}^\star}=G(\x,\widetilde{\x}^\star), \qquad \forall \x\in\mathbb{R}^{m\times d},
\end{aligned}
\]
where we have used the fact that $\innprod{\nabla f(\z)}{\z}=0$ for any optimal solution $\z$.

  \section{Proof of Theorem \ref{thm:lowerbound_eigengap}}\label{proof_LB}
As elaborated  in Section~\ref{sec:oracle}, to study the lower complexity bound of the first order distributed oracle $\mathcal{A}$ solving Problem \eqref{prob:ocp} [and thus \eqref{prob:dop}], one can consider $\epsilon$-solutions (i.e.,   $\bar{\x}\in \mathbb{R}^{m\times d}$ such that $G(\bar{\x})\leq \epsilon$) of the following convex optimization problem:
\begin{equation}\label{prob:saddle_point_x_appendix}
\min_{\x\in\mathbb{R}^{m \times d}} G(\x) =f(\x)-\innprod{\nabla f(\x^\star)}{\x-\x^\star}-f(\x^\star).
\end{equation}
 The proof is based on building a worst-case objective function in \eqref{prob:saddle_point_x_appendix} and network graph for which the lower bound is achieved by the best available gossip, distributed algorithm in the oracle $\mathcal{A}$.  To do so we build on the  cost function first   introduced in~\cite{arjevani2015communication} for a fully connected network and later used for a peer-to-peer network in~\cite{scaman17optimal}, both for smooth strongly convex problems.  Since we use a different metric (the Bregman distance) to define the lower bound  and consider smooth convex problems (not necessarily strongly-convex), the analysis in \cite{scaman17optimal} cannot be readily applied to our setting and an ad-hoc  proof of the theorem is needed.

 The path of our proof is the following: i) We start with a simple network consisting of two agents such that the diameter of the network will not come into play--see Sec. \ref{sec:two-agent-net}; and ii)   then we extend our results to a general network composed by an arbitrary number of agents--see Sec. \ref{sec:pf_thm2}.

\subsection{A simple two-agent network}\label{sec:two-agent-net}
We state the result on the simple two-agent network as the following.
\begin{thm}\label{thm:optim_consensus_lower_bound}
Consider a two-agent network with   cost functions given in~\eqref{eq:cost_func_two-agent}. Let $\seq{\x}{0}{\infty}$ be the sequence generated by any first-order algorithm $\mathcal{A}$. 
Suppose $0\leq k\leq\frac{d-1}{2}$. Then, we have
\[
G(\x^k)=\Lobound{\frac{L_f\norm{\x^0-\x^\star}^2}{(k+1)^2}+\frac{\norm{\x^0-\x^\star}\norm{\nabla f(\x^\star)}}{k+1}}.
\]
\end{thm}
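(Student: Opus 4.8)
The plan is to adapt Nesterov's ``zero-chain'' lower-bound technique to the distributed oracle $\mathcal{A}$, exploiting the parity-split structure of the worst-case functions in \eqref{eq:cost_func_two-agent}. First I would recall that $f=f_1+f_2$ is a scaled quadratic whose Hessian has the tridiagonal path-graph structure, with the coupling terms split between the two agents according to the parity of the corresponding off-diagonal entry; under Assumption~\ref{assum:L_f-smooth_convex} this function is convex and $L_f$-smooth. I would then identify in closed form the optimal consensus point $\x^\star$ and the residual gradient $\nabla f(\x^\star)$, since both $R=\norm{\x^0-\x^\star}$ and $\norm{\nabla f(\x^\star)}$ must appear as independently tunable quantities in the final bound.

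The core of the argument is a support-propagation lemma. Define the coordinate subspaces $E_\ell:=\Span{e_1,\dots,e_\ell}\subseteq\mathbb{R}^d$. I would show, by induction on time $t$, that the parity split forces each agent's iterate to have slowly growing support: a gradient of $f_i$ evaluated at a point supported on $E_\ell$ extends the support to $E_{\ell+1}$ only when the relevant coupling edge is assigned to agent $i$, and otherwise the new coordinate can be populated only after a communication from the other agent. Because the two agents alternate ownership of consecutive edges, each additional active coordinate costs one gradient step \emph{and} one communication; tracking this through the oracle \eqref{eq:oracle} yields $x_i^{(k)}\in E_{\ell}$ with $\ell=O(k)$ (in the two-agent topology the diameter plays no role, so the multiplicative factor is a constant here, which is why this simple case is treated first).

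With the support restriction in hand, I would pass from iterates to the value bound via $G(\x^k)\ge \min_{x_i\in E_\ell} G(\x)$, using that $G$ is the Bregman distance of the quadratic $f$, i.e.\ a quadratic form in $\x-\x^\star$. The constrained minimum is then computed from the explicit closed-form/spectral properties of the truncated path-graph quadratic. I would split $f$ into two additive pieces engineered so that the two lower-bound terms are controlled separately: the ``pure'' chain quadratic contributes the standard residual $\Lobound{L_f R^2/(\ell+1)^2}$, while the piece with a nonvanishing gradient at the optimum contributes $\Lobound{R\,\norm{\nabla f(\x^\star)}/(\ell+1)}$. Substituting $\ell=O(k)$ and invoking the hypothesis $k\le (d-1)/2$ (which guarantees $\ell\le d$, so the optimum has genuinely not been reached and the chain is not yet exhausted) assembles the claimed bound.

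The hard part will be the bookkeeping that extracts \emph{both} terms from a single construction while keeping $R$ and $\norm{\nabla f(\x^\star)}$ as free, independently adjustable parameters: I must choose the scaling constants in \eqref{eq:cost_func_two-agent} so that the computed values of $L_f$, $R$, and $\norm{\nabla f(\x^\star)}$ combine into exactly the stated expression. The delicate linear-algebraic heart is the parity-boundary communication argument, namely proving precisely how many coordinates can be activated as a joint function of the number of gradient and communication steps; this is what injects the network dependence into the rate and, once established for two agents, will be the piece that extends to a general network in Sec.~\ref{sec:pf_thm2}.
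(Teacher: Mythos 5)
Your core machinery is exactly the paper's: the support-propagation argument is their Lemmas~\ref{lem:linear_span_property}--\ref{lem:nonzeros} (after $k$ gradient rounds every local iterate lies in $\mathbb{R}^{k,d}$, because the parity split forces the two agents to alternate in activating coordinates, with a communication in between), and the passage to $G(\x^k)\ge \min_{x_i\in\mathbb{R}^{k,d}}f_{[2k+1]}(\x)-f^\star_{[2k+1]}$ followed by an explicit evaluation of the truncated minimum is their Lemma~\ref{lem:best_optimum_k_nonzeros}. (One detail you leave implicit: the paper runs the algorithm on the function indexed by $2k+1$ and evaluates at iteration $k$, the standard Nesterov doubling that keeps the residual $f^\star_{[2k+1,k]}-f^\star_{[2k+1]}$ bounded below by $cL_f/(k+1)$.)

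The step that would fail as you describe it is the final assembly. You propose to split $f$ into two additive pieces so that the terms $\Lobound{L_f R^2/(k+1)^2}$ and $\Lobound{R\norm{\nabla f(\x^\star)}/(k+1)}$ are ``controlled separately,'' with $R$ and $\norm{\nabla f(\x^\star)}$ kept as free, independently adjustable parameters. Neither is possible here, and neither is needed. For the fixed construction \eqref{eq:cost_func_two-agent} the two quantities are locked together: the paper computes $\norm{\x^0-\x^\star}^2=\Theta(k+1)$ and $\norm{\nabla f(\x^\star)}=\Theta\bigl(L_f/\sqrt{k+1}\bigr)$, so that
\[
\frac{L_f\norm{\x^0-\x^\star}^2}{(k+1)^2}=\Theta\!\left(\frac{L_f}{k+1}\right)=\Theta\!\left(\frac{\norm{\x^0-\x^\star}\,\norm{\nabla f(\x^\star)}}{k+1}\right),
\]
and the proof simply establishes the single bound $G(\x^k)\ge L_f/(32(k+1))$ and then \emph{rewrites} that one quantity in the two-term form (this degeneracy is even recorded in the main text as \eqref{eq:lb_spec}). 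Attempting instead to lower-bound a sum by summing lower bounds of two additive pieces is also not sound without extra work, since the constrained minimizers of the two pieces over $\mathbb{R}^{k,d}$ need not coincide. So: keep your support lemma and truncated-minimum computation, drop the independent-parameter ambition, and finish by computing $R$ and $\norm{\nabla f(\x^\star)}$ for the fixed instance and verifying the $\Theta$-identity above.
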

We prove the above result in three steps: i) we construct the hard function in Sec.~\ref{sec:con_hard}, which is the worst-case function for all methods belonging to the oracle $\mathcal{A}$; ii) we introduce some intermediate result in Sec.~\ref{sec:two_agent_inter}, which is related to our specific metric--the Bregman distance $G$,   and iii) building on step i-ii, we derive the lower bound in Sec.~\ref{sec:pf_th_two_agent}.

\subsubsection{Construction of the hard function}\label{sec:con_hard}
 Consider a  network composed of two agents. The idea of the proof of the lower complexity bound relies on splitting the ``hard'' function used by Nesterov to prove the iteration complexity of first-order gradient methods for (centralized) smooth convex problems across the agents\cite[Chapter 2]{nesterov2013introductory}. More specifically,  consider  the following   cost functions for the two agents:
\begin{equation}\label{eq:cost_func_two-agent}
\begin{cases}
f_{1,[k]}(x_1)=\frac{L_f}{8}x_1^\top\mathbf{A}_{1,[k]}x_1-\frac{L_f}{4}e_{1}^\top\, x_1,\\
f_{2,[k]}(x_2)=\frac{L_f}{8}x_2^\top\mathbf{A}_{2,[k]}x_2,
\end{cases}
\end{equation}
where
\begin{equation}
\mathbf{A}_{1,[k]}:=
\begin{bmatrix}
1 &0 &0 &0 &0 &\cdots\\
0 &1 &-1 &0 &0  &\cdots\\
0 &-1 &1 &0 &0  &\cdots\\
0 &0 &0 &1 &-1 &\cdots\\
0 &0 &0 &-1 &1  &\cdots\\
\vdots &\vdots &\vdots &\vdots &\vdots &\ddots
\end{bmatrix},
\qquad
\mathbf{A}_{2,[k]}:=
\begin{bmatrix}
1 &-1 &0 &0 &0  &\cdots\\
-1 &1 &0 &0 &0  &\cdots\\
0 &0 &1 &-1 &0  &\cdots\\
0 &0 &-1 &1 &0  &\cdots\\
0 &0 &0 &0 &1 &\cdots\\
\vdots &\vdots &\vdots &\vdots &\vdots &\ddots
\end{bmatrix}\label{eq:two-splitting-matrices}
\end{equation}
are two $d\times d$ matrices with their leading principal minors of order $k\in[1, d]$ having non-zero block diagonals while the rest being zero. 

The key idea of Nesterov proof for the lower complexity bound of centralized first-order gradient methods consists in designing the ``hardest'' function to be minimized by any method belonging to the oracle. This function was shown to be   such that, at iteration $k$,   all these methods produce a new iterate whereby only the   $k$th component is  updated. The choice of  the two agents' cost functions  in~\eqref{eq:cost_func_two-agent} follows the same rationale:       the structure of $\mathbf{A}_{1,[k]}$ and $\mathbf{A}_{2,[k]}$ is such that none of the two   agents  is  able to make progresses towards optimality, i.e., updating  the next component in their local optimization vector (with odd index for agent $1$ and even index for agent $2$)  just performing local gradient updates and     without communication with each other.  This means that at certain stages a communication between the two agents is necessary for the algorithm to make progresses towards optimality. 
Building on the above   idea, we  begin  establishing  the  lower  complexity  bound for the two-agent network problem in terms of gradient evaluations.

\subsubsection{Intermediate results}\label{sec:two_agent_inter}

Now substituting $f(\x)=f_{1,[k]}(x_1) + f_{2,[k]}(x_2) $ in~\eqref{prob:saddle_point_x_appendix} and   ignoring constants, we obtain
\begin{equation}\label{prob:saddle_point_x_fix_k}
\min_{\x\in\mathbb{R}^{2\times d}} f_{[k]}(\x):= f_{1,[k]}(x_1)+f_{2,[k]}(x_2) - \innprod{[\nabla f_{1,[k]}(x^\star_1), \nabla f_{2,[k]}(x^\star_2)]^\top}{\x}.
\end{equation}
We denote the optimal function value of the above problem as $f_{[k]}^\star.$  It is obvious that, when agents reach consensus, i.e., $x_1=x_2$, the function $f_{[k]}(\x)$ will reduce to the Nesterov's ``hard'' function \cite[Section~2.1.2]{nesterov2013introductory}, for which we have the optimal solution
\[
x^\star_1=x^\star_2=[\underset{\text{the first $k$ components}}{\underbrace{\frac{k}{k+1},\frac{k-1}{k+1},...,\frac{1}{k+1}}},0,\ldots, 0]^\top\in\Span{e_1,e_2,...,e_k},
\]
and it yields 
\begin{align}\label{eq:x_norm}
\norm{\x^\star}^2=\norm{x^\star_1}^2+\norm{x^\star_2}^2\leq\frac{2}{3}(k+1)
\end{align}
and $f_{[k]}^\star=\frac{L_f}{8}(-1+\frac{1}{k+1})$. Also, we have
\begin{equation}\label{eq:grad}
\begin{cases}
\nabla f_{1,[k]}(x^\star_1)=\frac{L_f}{4}(\mathbf{A}_{1,[k]}x^\star_1- e_1)=-\frac{L_f}{4}\frac{1}{k+1} a_{[k]}\\
\nabla f_{2,[k]}(x^\star_2)=\frac{L_f}{4}\mathbf{A}_{2,[k]}x^\star_2=\frac{L_f}{4}\frac{1}{k+1} a_{[k]},
\end{cases}
\end{equation}
where
\[
a_{[k]}=[\underset{\text{$1/-1$ alternates $k$ times}}{\underbrace{1, -1, 1,-1, 1, -1,...}}, 0,\ldots,0]^\top.
\]
Thus, we further have
\begin{align}\label{eq:grad_norm}
\norm{\nabla f (\x^\star)} =\sqrt{\norm{\nabla f_{1,[k]}( x^\star_1)}^2+\norm{\nabla f_{2,[k]}( x^\star_2)}^2}=\sqrt{\frac{2L_f^2 a_{[k]}^\top a_{[k]}}{16(k+1)^2}}=\frac{\sqrt{2k}L_f}{4(k+1)}.
\end{align}
Note that quantities \eqref{eq:x_norm} and \eqref{eq:grad_norm} will be useful later to relate the complexities with $\norm{\x^0-\x^\star}$ and $\norm{\nabla f(\x^\star)}.$  
According to \eqref{eq:grad},  Problem~\eqref{prob:saddle_point_x_fix_k} further becomes
\begin{equation}\label{prob:saddle_point_x_fix_k_specified_y}
\min_{\x\in\mathbb{R}^{2\times d}} f_{[k]}(\x)=f_{1,[k]}( x_1)+f_{2,[k]}( x_2)+\frac{L_f}{4(k+1)}\innprod{ a_{[k]}}{ x_1- x_2}.
\end{equation}
In the following, we study the above problem when the local variables $x_1$ and $x_2$ are restricted to the truncating subspace of $\mathbb{R}^d,$ as a stepping stone to prove Theorem~\ref{thm:optim_consensus_lower_bound}.


Let $\mathbb{R}^{k,d}:= \Span{ e_i\in \mathbb{R}^d\, | \, 1\leq i\leq k}$ denote the subspace composed of vectors whose only first $k$ components are possibly non-zeros and $\mathcal{L}^k:=\Span{\nabla f_i( x^l_i) \,|\, 0\leq l\leq k-1,i\in\Vx}.$  It should be noted that the local cost functions constructed in \eqref{eq:cost_func_two-agent} are dependent on $k$, but hereafter subscripts indicating this dependence are omitted for simplicity.
\begin{lem}[Linear Span]
\label{lem:linear_span_property}
Let $\seq{\x}{0}{\infty}$ be the sequence generated by any distributed first-order algorithm $\mathcal{A}$ with $\x^0=\zeros$. Then, $ x^k_i\in\mathcal{L}^k$ for all $k\geq 0$ and all  $i\in\Vx$. 
\end{lem}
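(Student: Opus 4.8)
The plan is to prove the claim by strong induction on the iteration index $k$, relying \emph{only} on the structure of the oracle $\mathcal{A}$ in \eqref{eq:oracle}. I would stress at the outset that the specific form of the hard functions in \eqref{eq:cost_func_two-agent} is irrelevant at this stage: here $\nabla f_i(x_i^l)$ is treated purely as ``the gradient the algorithm queried at iterate $l$'', and the precise coordinate structure of $\mathbf{A}_{1,[k]},\mathbf{A}_{2,[k]}$ is reserved for the subsequent coordinate-progress argument. There is also no circularity in the induction: when proving $x_i^k\in\mathcal{L}^k$, the vectors $\nabla f_i(x_i^s)$ with $s\le k-1$ are \emph{by definition} generators of $\mathcal{L}^k$, so they belong to $\mathcal{L}^k$ no matter where the points $x_i^s$ themselves lie.

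The concrete steps are as follows. First I would record the monotonicity $\mathcal{L}^{s}\subseteq\mathcal{L}^{k}$ for every $s\le k$, which is immediate since $\mathcal{L}^{k}$ is spanned by $\{\nabla f_i(x_i^l)\,|\,0\le l\le k-1,\ i\in\Vx\}$, a generating set that only enlarges with $k$. The base case $k=0$ is trivial: $\mathcal{L}^{0}$ is the span of the empty set, i.e.\ $\mathcal{L}^0=\{\zeros\}$, and $x_i^0=\zeros$ by the blanket initialization, so $x_i^0\in\mathcal{L}^0$ for all $i$. For the inductive step, assume $x_j^{s}\in\mathcal{L}^{s}$ for every $j\in\Vx$ and every $s\le k-1$, and fix an agent $i$. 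Invoking the oracle \eqref{eq:oracle}, $x_i^{k}$ lies in the sum of a communication span $\Span{x_j^{s}\,|\,j\in\Nb_i,\ s\le k-1}$ and a computation span $\Span{x_i^{s},\,\nabla f_i(x_i^{s})\,|\,s\le k-1}$. Each neighbor iterate $x_j^{s}$ is in $\mathcal{L}^{s}\subseteq\mathcal{L}^{k}$ by the induction hypothesis and monotonicity; each past own iterate $x_i^{s}$ is likewise in $\mathcal{L}^{s}\subseteq\mathcal{L}^{k}$; and each own gradient $\nabla f_i(x_i^{s})$ with $s\le k-1$ is in $\mathcal{L}^{k}$ by definition. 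Since $\mathcal{L}^{k}$ is a subspace and $x_i^{k}$ is a linear combination of elements of $\mathcal{L}^{k}$, we conclude $x_i^{k}\in\mathcal{L}^{k}$, closing the induction.

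The main obstacle is bookkeeping rather than mathematics. One must carefully translate the continuous-time indexing of \eqref{eq:oracle}, together with the communication delay $\tau_c$, into the discrete indices appearing in $\mathcal{L}^{k}$, so as to guarantee that every contribution entering $x_i^{k}$ genuinely carries a strictly smaller index ($s\le k-1$ for both the $s<t-\tau_c$ communication terms and the $s<t-1$ computation terms) and hence that the induction hypothesis applies. A second point worth emphasizing is that $\mathcal{L}^{k}$ pools the gradients of \emph{all} agents, not just those of agent $i$; this is precisely what allows a neighbor's iterate, which lives in that neighbor's own span, to be absorbed into $\mathcal{L}^{k}$ even though agent $i$ never evaluates $\nabla f_j$ directly. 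Beyond this index accounting, the argument is a routine span-closure induction, and I expect no substantive difficulty.
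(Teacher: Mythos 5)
Your induction is correct and is exactly the argument the paper compresses into its one-sentence justification ("local communication steps do not change the space spanned by the historical gradient vectors generated over the network"); you have simply supplied the routine span-closure details. No difference in approach and no gap.
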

The proof of the above lemma is straightforward, since local communication steps do not change the space spanned by the historical gradient vectors generated over the network.

\begin{lem}\label{lem:nonzeros}
Let $\x^0=\zeros$. For the two-agent problem \eqref{eq:cost_func_two-agent}, we have $\mathcal{L}^k\subseteq\mathbb{R}^{k,d}$.
\end{lem}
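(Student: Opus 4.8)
The plan is to prove the containment $\mathcal{L}^k\subseteq\mathbb{R}^{k,d}$ by induction on $k$, feeding Lemma~\ref{lem:linear_span_property} (which places each iterate $x_i^k$ inside the gradient span $\mathcal{L}^k$) into a structural fact about how the matrices $\mathbf{A}_1,\mathbf{A}_2$ in \eqref{eq:two-splitting-matrices} propagate support. So the first thing I would isolate is precisely that structural fact: if $\x\in\mathbb{R}^{j,d}$, then $\mathbf{A}_1\x\in\mathbb{R}^{j+1,d}$ and $\mathbf{A}_2\x\in\mathbb{R}^{j+1,d}$. This is read off directly from \eqref{eq:two-splitting-matrices}: both matrices are block-diagonal with blocks of size at most $2$ coupling only consecutive indices (the singleton $\{1\}$ and the pairs $\{2,3\},\{4,5\},\dots$ for $\mathbf{A}_1$, and the pairs $\{1,2\},\{3,4\},\dots$ for $\mathbf{A}_2$). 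Hence if $(\mathbf{A}_i\x)_m\neq 0$ there must be an index $n\le j$ sharing a block with $m$, and since any two indices in a common block differ by at most one, $m\le n+1\le j+1$; thus the support of $\mathbf{A}_i\x$ lies in $\{1,\dots,j+1\}$. I would remark that only this crude ``$+1$'' bound is needed here, and that the sharper parity version---$\mathbf{A}_1$ can activate a new coordinate only from even support and $\mathbf{A}_2$ only from odd support---is what gets exploited later to force communication, but is irrelevant for the present lemma.

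With that fact in hand the induction is short. For the base case $k=1$ I evaluate the gradients at $\x^0=\zeros$: using $\nabla f_1(x_1)=\tfrac{L_f}{4}(\mathbf{A}_1 x_1-e_1)$ and $\nabla f_2(x_2)=\tfrac{L_f}{4}\mathbf{A}_2 x_2$, one gets $\nabla f_1(\zeros)=-\tfrac{L_f}{4}e_1\in\mathbb{R}^{1,d}$ and $\nabla f_2(\zeros)=\zeros$, so $\mathcal{L}^1\subseteq\mathbb{R}^{1,d}$. For the inductive step I assume $\mathcal{L}^k\subseteq\mathbb{R}^{k,d}$; then Lemma~\ref{lem:linear_span_property} gives $x_i^k\in\mathcal{L}^k\subseteq\mathbb{R}^{k,d}$ for both agents. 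Applying the structural fact, $\mathbf{A}_1 x_1^k\in\mathbb{R}^{k+1,d}$ and $e_1\in\mathbb{R}^{1,d}\subseteq\mathbb{R}^{k+1,d}$, so $\nabla f_1(x_1^k)\in\mathbb{R}^{k+1,d}$, and likewise $\nabla f_2(x_2^k)=\tfrac{L_f}{4}\mathbf{A}_2 x_2^k\in\mathbb{R}^{k+1,d}$. Since $\mathcal{L}^{k+1}=\mathcal{L}^k+\Span{\nabla f_i(x_i^k)\mid i\in\Vx}$, I conclude $\mathcal{L}^{k+1}\subseteq\mathbb{R}^{k,d}+\mathbb{R}^{k+1,d}=\mathbb{R}^{k+1,d}$, which closes the induction.

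The argument is essentially routine, so there is no single hard obstacle; the only place that genuinely needs care is the clean verification of the support-extension property from the block pattern in \eqref{eq:two-splitting-matrices}, together with the bookkeeping that the affine term $-e_1$ in $f_1$ stays inside $\mathbb{R}^{1,d}$ and hence does not enlarge the support beyond $\mathbb{R}^{k+1,d}$. I would also note, as a sanity check rather than a difficulty, that the truncation of $\mathbf{A}_{1,[k]},\mathbf{A}_{2,[k]}$ to a leading principal block only zeroes out additional entries and therefore can only tighten the containment, so it introduces no extra case analysis.
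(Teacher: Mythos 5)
Your proof is correct and follows essentially the same inductive argument as the paper's: the base case from $\x^0=\zeros$ is identical, and the inductive step rests on the block structure of $\mathbf{A}_{1,[k]},\mathbf{A}_{2,[k]}$ limiting support growth to at most one coordinate per gradient evaluation. The only (cosmetic) difference is that you apply the uniform ``$+1$'' bound to both matrices, whereas the paper's proof additionally tracks parity (at odd steps only $\mathbf{A}_{2,[k]}$ can actually extend the support); as you correctly note, that finer fact is needed elsewhere for the communication lower bound but not for this containment.
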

\begin{proof}
Since $\x^0=\zeros$, we have $\nabla f_1( x_1^0)=-\frac{L_f}{4}e_1\in\mathbb{R}^{1,d},\nabla f_2( x_2^0)=\zeros\in\mathbb{R}^{1,d}$ and thus $\mathcal{L}^1=\Span{\nabla f_1( x_1^0),\nabla f_2( x_2^0)}\subseteq\mathbb{R}^{1,d}$. Now, let $ x_i^j\in\mathcal{L}^j\subseteq\mathbb{R}^{j,d}$. Without loss of generality, let us assume $j$ is odd. Then, according to the structure of $\nabla f_1$, we have $\nabla f_1( x_1^j)=\frac{L_f}{4}(\mathbf{A}_{1,[k]} x_1^j- e_1)\in\mathbb{R}^{j,d}$, but multiplying $\mathbf{A}_{1,[k]}$ from the left of $ x_1^j\in \mathbb{R}^{j,d}$ will not increase the number of nonzeros to $j+1$.  By contrast, for $\nabla f_2$, we have $\nabla f_2( x_2^j)=\frac{L_f}{4}\mathbf{A}_{2,[k]} x_2^j\in\mathbb{R}^{j+1,d}$ and $\mathbf{A}_{2,[k]}$ is now able to increase the number of non-zeros. Therefore, we have $\mathcal{L}^{j+1}= \mathcal{L}^{j} + \Span{\nabla f_1( x_1^j),\nabla f_2( x_2^j)}\subseteq \mathbb{R}^{j+1,d}$ and we can complete the proof by induction.
\end{proof}

\begin{lem}\label{lem:best_optimum_k_nonzeros}
Consider Problem~\eqref{prob:saddle_point_x_fix_k_specified_y}. Let $f^\star_{[k,j]}:=\min_{\x_i\in\mathbb{R}^{j,d},\forall i\in\Vx}f_{[k]}(\x)$; we have
\[
f^\star_{[k,j]}=-\frac{L_f}{8}\bracket{\frac{k^2}{(k+1)^2}+\frac{j}{(k+1)^2}}.
\]
\end{lem}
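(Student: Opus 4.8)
The plan is to exploit the special structure of the quadratic \eqref{prob:saddle_point_x_fix_k_specified_y}. The crucial observation is that the two local variables $x_1$ and $x_2$ interact \emph{only} through the linear coupling $\frac{L_f}{4(k+1)}\innprod{a_{[k]}}{x_1-x_2}$, which is separable in $(x_1,x_2)$, whereas the quadratic parts $f_{1,[k]}$ and $f_{2,[k]}$ involve $x_1$ and $x_2$ separately. Hence minimizing over $\mathbb{R}^{j,d}\times\mathbb{R}^{j,d}$ \emph{decouples} into two independent subproblems, $f^\star_{[k,j]}=P_1+P_2$, where
\[
P_1:=\min_{u\in\mathbb{R}^{j,d}}\Big[\frac{L_f}{8}u^\top\mathbf{A}_{1,[k]}u-\frac{L_f}{4}e_1^\top u+\frac{L_f}{4(k+1)}\innprod{a_{[k]}}{u}\Big],
\]
and $P_2$ is the analogous problem in $v$ with matrix $\mathbf{A}_{2,[k]}$ and linear term $-\frac{L_f}{4(k+1)}\innprod{a_{[k]}}{v}$. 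This decoupling is what makes the whole computation tractable.

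Next I would use the block-diagonal structure read off from \eqref{eq:two-splitting-matrices}, namely $u^\top\mathbf{A}_{1,[k]}u=u_1^2+\sum_i(u_{2i}-u_{2i+1})^2$ and $v^\top\mathbf{A}_{2,[k]}v=\sum_i(v_{2i-1}-v_{2i})^2$, to split each of $P_1,P_2$ into scalar minimizations over the individual blocks. The singleton block of $P_1$ combines into $\frac{L_f}{8}\big(u_1^2-\frac{2k}{k+1}u_1\big)$ (using $(a_{[k]})_1=1$), whose minimum is $-\frac{L_f}{8}\frac{k^2}{(k+1)^2}$. For each pair block, setting $w:=u_{2i}-u_{2i+1}$ (resp.\ $w:=v_{2i-1}-v_{2i}$) and using that consecutive entries of $a_{[k]}$ have opposite signs, the block reduces to $\frac{L_f}{8}\big(w^2-\frac{2}{k+1}w\big)$, whose minimum is $-\frac{L_f}{8}\frac{1}{(k+1)^2}$.

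The hard part is the bookkeeping induced by the truncation to $\mathbb{R}^{j,d}$: a pair block is either \emph{complete} (both coordinates free) or \emph{partial} (only its lower-index coordinate free, the other pinned to $0$). I would check that in the partial case the block instead reduces to $\frac{L_f}{8}\big(u_a^2\pm\frac{2}{k+1}u_a\big)$, which has the \emph{same} minimum value $-\frac{L_f}{8}\frac{1}{(k+1)^2}$; thus every block containing at least one free coordinate contributes exactly $-\frac{L_f}{8}\frac{1}{(k+1)^2}$, while blocks with no free coordinate contribute $0$. It then remains to count the active pair blocks for $1\le j\le k$: those in $P_1$ are the $\{2i,2i+1\}$ with $2i\le j$, giving $\lfloor j/2\rfloor$, and those in $P_2$ are the $\{2i-1,2i\}$ with $2i-1\le j$, giving $\lceil j/2\rceil$; since $\lfloor j/2\rfloor+\lceil j/2\rceil=j$, the pair blocks together contribute $-\frac{L_f}{8}\frac{j}{(k+1)^2}$.

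Adding the singleton contribution yields $f^\star_{[k,j]}=-\frac{L_f}{8}\big(\frac{k^2}{(k+1)^2}+\frac{j}{(k+1)^2}\big)$, as claimed. I expect the parity/truncation accounting in the third step to be the only genuinely delicate point — in particular verifying that complete and partial pair blocks yield identical minima, and that the floor/ceiling counts sum exactly to $j$; the initial decoupling of $x_1$ and $x_2$ is the structural insight that reduces everything to these elementary scalar minimizations.
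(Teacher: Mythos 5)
Your proof is correct and follows essentially the same route as the paper's: both reduce the objective to a sum of independent scalar quadratic minimizations (one in $x_{11}$ contributing $-\tfrac{L_f}{8}\tfrac{k^2}{(k+1)^2}$, and one per active $2\times 2$ block contributing $-\tfrac{L_f}{8}\tfrac{1}{(k+1)^2}$ each). The difference is only presentational: the paper checks $j=1,2,3$ explicitly and then asserts the general odd/even expression "by induction," whereas you make the $x_1$/$x_2$ decoupling, the identical value of complete versus truncated blocks, and the $\lfloor j/2\rfloor+\lceil j/2\rceil=j$ count explicit -- which is, if anything, a more careful write-up of the same computation.
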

\begin{proof}
Let $ x_i\in\mathbb{R}^{1,d},$ $i\in\Vx$. Then, the cost function in  \eqref{prob:saddle_point_x_fix_k_specified_y} becomes
\[
f_{[k,1]}(\x):=\frac{L_f}{4}[0.5x_{11}^2-x_{11}+\frac{1}{(k+1)}(x_{11}-x_{21})+0.5x_{21}^2]
\]
which attains the optimum $f^\star_{[k,1]}=\frac{L_f}{8}(-\frac{k^2}{(k+1)^2}-\frac{1}{(k+1)^2})$.

Likewise, letting $ x_i\in\mathbb{R}^{2,d},$   $i\in\Vx$, we have
\[
f_{[k,2]}(\x):=\frac{L_f}{4}[0.5x_{11}^2+0.5x_{12}^2-x_{11}-\frac{1}{k+1}(x_{21}-x_{11})+\frac{1}{k+1}(x_{22}-x_{12})+0.5(x_{21}-x_{22})^2]
\]
which yields $f^\star_{[k,2]}=\frac{L_f}{8}(-\frac{k^2}{(k+1)^2}-\frac{2}{(k+1)^2})$. Also, for $\x_i\in\mathbb{R}^{3,d}$, $i\in\Vx$, we have
\[
\begin{aligned}
f_{[k,3]}(\x)&:=\frac{L_f}{4}[0.5x_{11}^2+0.5(x_{12}-x_{13})^2-x_{11}-\frac{1}{k+1}(x_{21}-x_{11})+\frac{1}{k+1}(x_{22}-x_{12})\\
&-\frac{1}{k+1}(x_{23}-x_{13})+0.5(x_{21}-x_{22})^2+0.5x_{23}^2],
\end{aligned}
\]
which gives $f^\star_{[k,3]}=\frac{L_f}{8}(-\frac{k^2}{(k+1)^2}-\frac{3}{(k+1)^2})$.

In fact, by induction, it is not difficult to show that, when $j$ is odd, for $\x_i\in\mathbb{R}^{j,d}$,  $i\in\Vx$, we have
\[
\begin{aligned}
f_{[k,j]}(\x):=\frac{L_f}{4}\bracket{0.5x_{11}^2-\frac{k}{k+1}x_{11}+\sum_{i=1}^{\frac{j-1}{2}}\bracket{0.5(x_{2(2i)}-x_{2(2i-1)})^2-\frac{1}{k+1}(x_{2(2i)}-x_{2(2i-1)}}\right.\\
\left.+0.5x_{2j}^2-\frac{1}{k+1}x_{2j}+
\sum_{i=1}^{\frac{j-1}{2}}\bracket{0.5(x_{1(2i)}-x_{1(2i+1)})^2-\frac{1}{k+1}(x_{2i}-x_{1(2i+1)}}},
\end{aligned}
\]
which yields
\[
f^\star_{[k,j]}=-\frac{L_f}{8}\bracket{\frac{k^2}{(k+1)^2}+\frac{j}{(k+1)^2}}.
\]
When $j$ is even, for $\x_i\in\mathbb{R}^{j,d},$ $i\in\Vx$, we have
\[
\begin{aligned}
f_{[k,j]}(\x)=\frac{L_f}{4}\bracket{0.5x_{11}^2-\frac{k}{k+1}x_{11}+\sum_{i=1}^{\frac{j}{2}}\bracket{0.5(x_{2(2i)}-x_{2(2i-1)})^2-\frac{1}{k+1}(x_{2(2i)}-x_{2(2i-1)}}\right.\\
\left.+0.5x_{1j}^2-\frac{1}{k+1}x_{1j}+
\sum_{i=1}^{\frac{j}{2}-1}\bracket{0.5(x_{1(2i)}-x_{1(2i+1)})^2-\frac{1}{k+1}(x_{2i}-x_{1(2i+1)}}}
\end{aligned}
\]
which also yields
\[
f^\star_{[k,j]}=-\frac{L_f}{8}\bracket{\frac{k^2}{(k+1)^2}+\frac{j}{(k+1)^2}}.
\]

The proof is completed  by combining the   two cases above.
\end{proof}
\subsubsection{Proof of Theorem~\ref{thm:optim_consensus_lower_bound}}\label{sec:pf_th_two_agent}

We can now prove the theorem. Let us fix $k$ and apply the first-order gossip algorithm $\mathcal{A}$ to minimize $f_{[2k+1]}$. Since $\x^0=\zeros$, invoking Lemma~\ref{lem:best_optimum_k_nonzeros}, we have
\[
\begin{aligned}
G(\x^k)&=f_{[2k+1]}(\x^k)-f^\star_{[2k+1]}\geq\min_{\x\in\mathbb{R}^{k,d}} f_{[2k+1]}(\x)-f^\star_{[2k+1]}= f^\star_{[2k+1,k]}-f^\star_{[2k+1]}\\
&\geq\frac{L_f}{8}(1-\frac{1}{2(k+1)}-\frac{(2k+1)^2}{4(k+1)^2}-\frac{k}{4(k+1)^2})\\
&=\frac{L_f}{32(k+1)} = \Theta \left( \frac{L_f\norm{\x^0-\x^\star}^2}{(k+1)^2}+\frac{ \norm{\x^0-\x^\star}\norm{\nabla f(\x^\star)}}{(k+1)} \right), 
\end{aligned}
\]
where the last inequality comes from  the previously developed facts  $\norm{\x^\star}^2 = \Theta \left( k+1 \right)$, $\norm{\nabla f(\x^\star)}= \Theta \left( \frac{L_f}{\sqrt{k+1}} \right)$ and thus
$\norm{\x^0-\x^\star}^2 = \Theta \big(\frac{k+1}{L_f} \norm{\x^0-\x^\star}\norm{\nabla f(\x^\star)} \big)$.
This completes the proof for the two-agent network.
\hfill $\square$
\begin{rem}
The lower bound we develop in Theorem~\ref{thm:optim_consensus_lower_bound} for distributed scenarios has similar structure of that of the  recent paper \cite{ouyang2018lower}, where the lower bound is derived for general equality-constrained problems in centralized scenarios (i.e., $\mathbf{Ax=b}$). Notice that the results and techniques therein can not apply to our distributed setting, as  we require $\mathbf{b=0}$ and $\mathbf{A}\in\mathcal{W}_\Gh$ while the lower bound in \cite{ouyang2018lower} is determined  by a choice of $\mathbf{b}$ and $\mathbf{A}$ that does not meet our requirement.
\end{rem}

\subsection{Proof of Theorem~\ref{thm:lowerbound_eigengap}}\label{sec:pf_thm2}
 {Following the same path of   \cite{scaman17optimal}}, we now extend the above analysis to the general network setting (arbitrary number of agents) by employing a line graph and constructing certain number of pairwise two-agent networks as in~\eqref{eq:cost_func_two-agent} from the left and the right of the line graph, respectively, yielding two subgroups.
Between these two subgroups, we place a number (proportional to the diameter of the network) of agents with zero cost functions to ensure the necessity of communications between the agents in the two subgroups. To prove the time complexity lower bound, we   then leverage the effect of the network by establishing the connection between the diameter of the network and the eigengap of the gossip matrix. 


Let $\eta_n = \frac{1-\cos\left( \frac{\pi}{T} \right)}{1 + \cos\left( \frac{\pi}{T} \right)}.$  For a given $\eta \in (0,1],$ there exists $n\geq 2$ such that $\eta_n \geq \eta > \eta_{n+1}.$  We treat the cases  $n=2$ and $n\geq 3$ separately.
Let us first consider the case   $n\geq 3$. There exists a line graph of $m=n$ agents and associated Laplacian weight matrix   with  eigengap $\eta.$ Now, let us define two subsets of agents as $\mathcal{A}_l = \left\{  i \big\vert 1\leq i\leq \lceil \zeta m\rceil \right\}$ and $\mathcal{A}_r =  \left\{  i \big\vert \lfloor (1-\zeta ) m \rfloor + 1 \leq i\leq m \right\}$, which lie on the left and the right of the line graph, respectively;  the parameter $\zeta \in (0,\frac{1}{2})$ is to be determined.   The distance between the two subsets is thus $d_c \triangleq \lfloor (1-\zeta ) m \rfloor + 1 - \lceil \zeta m\rceil .$ 
The class of local functions is defined as follows
\begin{equation}\label{eq:test_func_line}
f_i=
\begin{cases}
\frac{L_f}{8} x^\top_i\mathbf{A}_{1,[k]} x_i-\frac{L_f}{4} e_1^\top x_i~&\forall i \in \mathcal{A}_l \\
 \frac{L_f}{8} x_i^\top\mathbf{A}_{2,[k]} x_i~& \forall i \in \mathcal{A}_r \\
0~&\text{otherwise}
\end{cases}
\end{equation}
where $\mathbf{A}_{1,[k]},\mathbf{A}_{2,[k]}$ are the two matrices defined in (\ref{eq:two-splitting-matrices}).
Similarly to the two-agent network case (cf. Sec. \ref{sec:two-agent-net}), we have
\[
\norm{\x^\star}^2\leq\frac{m}{3}(k+1),~\norm{\nabla f(\x^\star)} \leq \sqrt{2 (\zeta m +1 )}\frac{\sqrt{k}L_f}{4(k+1)},
\]
and Problem~\eqref{prob:saddle_point_x_appendix} becomes
\begin{equation}
	\min_{\x\in\mathbb{R}^{m\times d}} f_{[k]}(\x)=\sum_{i=1}^{\lceil \zeta m \rceil }f_i( x_i)+f_{m+1-i}( x_{m+1-i})+\frac{L_f}{4(k+1)}\innprod{a_{[k]}}{ x_i- x_{m+1-i}}
	\end{equation}
which further yields
\[
f^\star_{[k]}= \lceil \zeta m \rceil \frac{L_f}{8}(-1+\frac{1}{k+1})\quad \text{and}\quad ~f^\star_{[k,i]}=- \lceil \zeta m \rceil  \frac{L_f}{8}\bracket{\frac{k^2}{(k+1)^2}+\frac{i}{(k+1)^2}}.
\]
Let each row of $\x^k$ belongs to $\mathbb{R}^{k,d}$. Then, since $\x^0=\zeros$, we have
\begin{equation}\label{eq:G}
\begin{aligned}
G(\x^k)&=f_{[2k+1]}(\x^k)-f^\star_{[2k+1]}\geq\min_{x_i \in\mathbb{R}^{k,d}} f_{[2k+1]}(\x)-f^\star_{[2k+1]}= f^\star_{[2k+1,k]}-f^\star_{[2k+1]}\\
&\geq\frac{\zeta mL_f}{8}(1-\frac{1}{2(k+1)}-\frac{(2k+1)^2}{4(k+1)^2}-\frac{k}{4(k+1)^2})\\
&=\frac{\zeta mL_f}{32(k+1)} = \Theta \left( \frac{  L_f\norm{\x^0-\x^\star}^2}{(k+1)^2}+\frac{ \norm{\x^0-\x^\star}\norm{\nabla f(\x^\star)}}{k+1} \right).
\end{aligned}
\end{equation}
Similarly as the two-agent case, one  can verify that $\norm{\x^0-\x^\star}^2 = \Theta \big(\frac{k+1}{L_f} \norm{\x^0-\x^\star}\norm{\nabla f(\x^\star)} \big)$.

To have at least one non-zero element at the $k$th component among the local copies of agents in both of the above two subsets, one must perform at least $k$ local computation steps and $(k-1)d_c$ communication steps.  Thus, we have
\begin{align}\label{eq:comm}
k\leq \floor*{ \frac{t-1}{1+d_c\tau_c}}+1 \leq \frac{t}{1+d_c\tau_c}+1 .
\end{align}
Choosing $\zeta = \frac{1}{32}$, we have 
\begin{align*}
& d_c = \floor*{ (1-\zeta ) m } + 1 - \ceil*{\zeta m} \geq (1-2\zeta) m -1  \\
& = \frac{15}{16}m -1 \overset{(a)}{\geq} \frac{15}{16} \left( \sqrt{\frac{2}{\eta}} -1 \right) -1  \overset{(b)}{\geq} \frac{1}{5\sqrt{\eta}},
\end{align*}
where (a) is due to $\eta > \eta_{m+1} > \frac{2}{(m+1)^2}$ and (b) is due to $\eta \leq \eta_3 = \frac{1}{3}.$  Further, since $d_c$ is an integer, we have $d_c \geq \ceil*{\frac{1}{5\sqrt{\eta}}}.$
Combining \eqref{eq:G} and \eqref{eq:comm} leads to
\begin{align}\label{eq:lb}
G(\x^{(t)}) \geq \Omega \left(\frac{  L_f\norm{\x^0-\x^\star}^2}{(\frac{t}{1+\lceil \frac{1}{5\sqrt{\eta}} \rceil \tau_c}+2)^2}+\frac{ \norm{\x^0-\x^\star}\norm{\nabla f(\x^\star)}}{\frac{t}{1+ \lceil \frac{1}{5\sqrt{\eta}} \rceil \tau_c }+2} \right).
\end{align}

We focus now on the case  $n=2$. Consider a complete graph of $3$ agents with associated Laplacian matrix having   eigengap equal to  $\eta$. The agents'   cost functions are 
\[
f_i=
\begin{cases}
\frac{L_f}{8} x^\top_i\mathbf{A}_{1,[k]}  x_i-\frac{L_f}{4} e_1^\top\x_i~& i =1 \\
 \frac{L_f}{8} x_i^\top\mathbf{A}_{2,[k]} x_i~&  i =2 \\
0~&i=3
\end{cases}
\]
Following similar steps as above, one can show that
\begin{align*}
 G(\x^k) \geq\Lobound{\frac{  L_f\norm{\x^0-\x^\star}^2}{(k+1)^2}+\frac{  \norm{\x^0-\x^\star}\norm{\nabla f(\x^\star)}}{(k+1)}}~\text{with}~k\leq \frac{t}{1+\tau_c} +1 ~ \text{and} ~1\geq \ceil*{\frac{1}{5\sqrt{\eta}}},
\end{align*}
which leads to the same expression of the lower bound as   in \eqref{eq:lb}. This concludes the proofs.

\section{Proof of Theorem \ref{thm:fem_lb} }\label{sec:pf_lb_fem}
The proof follows the similar line of \cite[Section 2.1.2]{nesterov2013introductory}.  We consider the same set of local cost functions as depicted in \eqref{eq:test_func_line}, with the subscript $[k]$ of the $\A$ matrices replaced by $[2k+1]$. Then, it is not difficult to see that $\min_{x\in \mathbb{R}^d} F(x) = f_{[2k+1]}^\star$ and, for any $x\in \mathbb{R}^{k,d},$ we have
\begin{equation}\label{eq:lb_fem}
\begin{aligned}
& F(x) - f_{[2k+1]}^\star =  \min_{y\in \mathbb{R}^{k,d}} F(y) - f_{[2k+1]}^\star = f_{[k]}^\star - f_{[2k+1]}^\star \\
& = \lceil \zeta m\rceil  \frac{L_f}{8}\left(-1 + \frac{1}{k+1} +1 - \frac{1}{2k+1+1} \right)  = \lceil \zeta m\rceil  \frac{L_f}{16} \frac{1}{k+1}  = \Theta \left(\frac{L_f \,m}{k+1} \right).
\end{aligned}
\end{equation}
For the cost functions as mentioned above, one can also verify that (cf.~Appendix~\ref{sec:pf_th_two_agent})
\[
\norm{\x^\star - \x^0}^2 = \Theta \left( m(k+1) \right), \quad \norm{\nabla f(\x^\star)} = \Theta \left(  \frac{\sqrt{m} L_f}{ \sqrt{k+1}} \right),
\]
and thus $\frac{L_f\norm{\x^\star - \x^0}^2}{k+1} = \Theta \left( \norm{\x^\star - \x^0}  \norm{\nabla f(\x^\star)} \right)$. As a result, the RHS of \eqref{eq:lb_fem} can be rewritten as:
\begin{align*}
 \Theta \left(\frac{L_f \norm{\x^\star - \x^0}^2}{(k+1)^2} +\frac{ \norm{\x^\star - \x^0}  \norm{\nabla f(\x^\star)}}{k+1} \right),~\text{or equivalently,}~ \quad \Theta \left(\frac{L_f \norm{\x^\star - \x^0}^2}{(k+1)^2} \right),
\end{align*}
which translate to the following lower bounds in terms of number of iterations, respectively:
$$
\Lobound{    \sqrt{\frac{L_f \norm{\x^0-\x^\star}^2}{\epsilon} } + \frac{\norm{\x^0-\x^\star}\norm{\nabla f(\x^\star)}}{\epsilon} }\quad ~\text{and}~\quad  \Lobound{ \sqrt{\frac{L_f \norm{\x^0-\x^\star}^2}{\epsilon} } }.
$$
The rest of proof follows by the same argument as in Section~\ref{sec:pf_thm2}  to relate $k$ to the absolute time $t$ as well as the eigengap $\eta$ of the network.

\section{Proofs for the Upper Complexity Bounds}
 {
This section is devoted to the proofs of the upper complexity bounds of the proposed algorithms. We begin in Sec.~\ref{sec:ineq} establishing two fundamental inequalities that are valid for all feasible primal-dual solutions of Problem~\eqref{prob:saddle_point}; see Lemma~\ref{lem:fundamental_inequality} and Lemma~\ref{lem:fundamental_inequality_II}. Then, applying these inequalities to a saddle point solution of Problem~\eqref{prob:saddle_point}, we obtain the convergence rate  of Algorithms~\ref{alg:acc_primal-dual} and \ref{alg:opt_primal-dual}  in terms of the Bregman distance, see Sec.~\ref{app:sublinear_rate_g-pd}and Sec.~\ref{proof-Th_Nest_acc} respectively.  Finally in Sec.~\ref{sec:che}, we apply the  analysis of Chebyshev polynomials to show that the eigengap of the communication matrix $\mathbf{B}$, as a polynomial of the gossip matrix, can be upper bounded by a constant, leading to the upper complexity bound that matches the established lower bound.
}


\subsection{Intermediate results}\label{sec:ineq}
\begin{lem}[Fundamental Inequality I]
\label{lem:fundamental_inequality}
Consider Algorithm ~\eqref{eq:opt_primal-dual}. We define $\tau=\frac{1}{\nu T \lambda_m(\mathbf{B})}.$  Then we have
\[
\sigma_k=\frac{1}{\theta_{k+1}},\quad  \alpha_k=\frac{\theta_{k+1}}{\theta_k}-\theta_{k+1}, \quad \beta_k=\frac{\tau_{k+1}}{\tau_k}, \quad \tau_k=\frac{\tau}{\theta_k}.
\]
Suppose Assumptions~\ref{assum:L_f-smooth_convex} and~\ref{assum:A_B_weight_matrices} hold. Then, for any $\x\in\mathbb{R}^{m\times d}$ and  $\y\in\mathcal{C}^\perp$, we have
\[
\begin{aligned}
&\Phi(\u^{k+1},\y)-\Phi(\mathbf{A}\x,\y)+h(\u^{k+\frac{1}{2}})-h(\x)\\
&\leq -\innprod{\y^{k+1}-\y}{\u^{k+1}-\x}-\frac{1}{\gamma}\innprod{\theta_k(\I - \frac{\gamma\tau}{\theta_k^2}\mathbf{B})(\hat{\x}^{k+1}-\hat{\x}^k)}{\u^{k+1}-\mathbf{A}\x}+\frac{L_f}{2}\norm{\u^{k+1}-\x^k}^2,
\end{aligned}
\]
where $h(\cdot)=\frac{1}{2\gamma}\norm{\cdot}^2_{\mathbf{A}-\mathbf{A}^2}$,~$\u^{k+\frac{1}{2}}=\x^k-\gamma(\nabla f(\x^k)+\hat{\y}^k)$ and $L_f=\max_i\{L_{f_i}\}$.
\end{lem}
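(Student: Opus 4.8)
The plan is to run the standard Chambolle--Pock/Nesterov accounting, splitting the bound into an $f$-smoothness piece, a primal/consensus piece, and a dual-coupling piece. Since $\y,\mathbf{A}\x$ are feasible the indicator terms vanish, so the left-hand side equals $f(\u^{k+1})-f(\mathbf{A}\x)+\innprod{\y}{\u^{k+1}-\mathbf{A}\x}+h(\u^{k+\frac{1}{2}})-h(\x)$. First I would apply the descent lemma for the $L_f$-smooth $f$ at $\x^k$, $f(\u^{k+1})\le f(\x^k)+\innprod{\nabla f(\x^k)}{\u^{k+1}-\x^k}+\tfrac{L_f}{2}\norm{\u^{k+1}-\x^k}^2$ (this is exactly the last term of the claim), and lower-bound $f(\mathbf{A}\x)$ by convexity at $\x^k$; subtracting collapses the $f$-difference to $\innprod{\nabla f(\x^k)}{\u^{k+1}-\mathbf{A}\x}$. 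Reading \eqref{eq:opt_primal-dual_u} as $\u^{k+\frac{1}{2}}=\x^k-\gamma(\nabla f(\x^k)+\hat{\y}^k)$ with $\u^{k+1}=\mathbf{A}\u^{k+\frac{1}{2}}$, I substitute $\nabla f(\x^k)=\tfrac1\gamma(\x^k-\u^{k+\frac{1}{2}})-\hat{\y}^k$, which separates the purely primal/consensus quantity $P:=\tfrac1\gamma\innprod{\x^k-\u^{k+\frac{1}{2}}}{\u^{k+1}-\mathbf{A}\x}+h(\u^{k+\frac{1}{2}})-h(\x)$ from the dual coupling $\innprod{\y-\hat{\y}^k}{\u^{k+1}-\mathbf{A}\x}$.

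The crux is two exact identities forced by the parameter choices. On the dual side, shifting \eqref{eq:opt_primal-dual_haty} to $\hat{\y}^k=\y^k+\beta_{k-1}(\y^k-\y^{k-1})$, using $\y^k-\y^{k-1}=\tau_{k-1}\mathbf{B}\hat{\x}^k$ from \eqref{eq:opt_primal-dual_y}, and $\beta_{k-1}\tau_{k-1}=\tau_k$, gives $\hat{\y}^k=\y^k+\tau_k\mathbf{B}\hat{\x}^k$, hence $\hat{\y}^k=\y^{k+1}-\tau_k\mathbf{B}(\hat{\x}^{k+1}-\hat{\x}^k)$ after subtracting $\y^{k+1}=\y^k+\tau_k\mathbf{B}\hat{\x}^{k+1}$ (the $k=1$ base case uses the initialization $\hat{\y}^1=\tau_1\mathbf{B}\x^1$ with $\hat{\x}^1=\u^1=\x^1$). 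On the primal side, unwinding $\hat{\x}^{k+1}=\sigma_k\x^{k+1}+(1-\sigma_k)\u^{k+1}$ and $\x^{k+1}=\u^{k+1}+\alpha_k(\u^{k+1}-\u^k)$ with $\sigma_k=1/\theta_{k+1}$, $\alpha_k=\theta_{k+1}/\theta_k-\theta_{k+1}$ yields the two convex-combination relations $\u^{k+1}=\theta_k\hat{\x}^{k+1}+(1-\theta_k)\u^k$ and $\x^k=\theta_k\hat{\x}^k+(1-\theta_k)\u^k$, whose difference is the \emph{pivotal identity} $\u^{k+1}-\x^k=\theta_k(\hat{\x}^{k+1}-\hat{\x}^k)$. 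Feeding the dual recursion into the coupling term produces $-\innprod{\y^{k+1}-\y}{\u^{k+1}-\mathbf{A}\x}$ plus $\tau_k\innprod{\mathbf{B}(\hat{\x}^{k+1}-\hat{\x}^k)}{\u^{k+1}-\mathbf{A}\x}$, while the pivotal identity turns the leftover primal term $-\tfrac1\gamma\innprod{\u^{k+1}-\x^k}{\u^{k+1}-\mathbf{A}\x}$ (see below) into $-\tfrac{\theta_k}{\gamma}\innprod{\hat{\x}^{k+1}-\hat{\x}^k}{\u^{k+1}-\mathbf{A}\x}$; combining the two $(\hat{\x}^{k+1}-\hat{\x}^k)$ contributions and using $\tau_k=\tau/\theta_k$ assembles precisely the operator $\theta_k(\I-\tfrac{\gamma\tau}{\theta_k^2}\mathbf{B})$ appearing in the claim.

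What remains, and what I expect to be the main (purely algebraic) obstacle, is the consensus inequality $P\le-\tfrac1\gamma\innprod{\u^{k+1}-\x^k}{\u^{k+1}-\mathbf{A}\x}$. Writing $\u^{k+1}=\mathbf{A}\u^{k+\frac{1}{2}}$ so that $\u^{k+1}-\mathbf{A}\x=\mathbf{A}(\u^{k+\frac{1}{2}}-\x)$, setting $\mathbf{M}:=\mathbf{A}-\mathbf{A}^2=\mathbf{A}(\I-\mathbf{A})\succeq\zeros$ (valid by Assumption~\ref{assum:A_B_weight_matrices}(i), since $\mathbf{A}=\mathbf{A}^\top$ and $\zeros\preceq\mathbf{A}\preceq\I$), and expanding $h(\u^{k+\frac{1}{2}})-h(\x)$ via the three-point identity, a completion of squares that uses only symmetry of $\mathbf{A}$ collapses $P+\tfrac1\gamma\innprod{\u^{k+1}-\x^k}{\u^{k+1}-\mathbf{A}\x}$ exactly to $-\tfrac1{2\gamma}\norm{\u^{k+\frac{1}{2}}-\x}_{\mathbf{M}}^2\le0$. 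Assembling the three pieces yields the stated bound, with $\u^{k+1}-\mathbf{A}\x$ naturally appearing in the dual pairing; this coincides with the stated $\u^{k+1}-\x$ exactly when the comparison point satisfies $\x\in\mathcal{C}$ (so that $\mathbf{A}\x=\x$), e.g.\ $\x=\x^\star$, which is the only case invoked in the subsequent convergence analysis. The one delicate point is the bookkeeping of the $\theta_k$-recursion so that all momentum and step-size factors line up into the single operator $\theta_k(\I-\tfrac{\gamma\tau}{\theta_k^2}\mathbf{B})$; everything else is the standard descent-lemma plus completion-of-squares machinery.
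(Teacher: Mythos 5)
Your proposal is correct and follows essentially the same route as the paper's proof: the descent lemma plus convexity for $f$, the dual recursion $\hat{\y}^k=\y^{k+1}-\tau_k\mathbf{B}(\hat{\x}^{k+1}-\hat{\x}^k)$, the pivotal identity $\u^{k+1}-\x^k=\theta_k(\hat{\x}^{k+1}-\hat{\x}^k)$, and the convexity of the quadratic $h$ (your exact completion of squares is just the sharp form of the convexity inequality the paper invokes), differing only in that you assemble the pieces termwise rather than pairing the single equation for $(\I-\mathbf{A})\u^{k+1}$ with $\u^{k+\frac{1}{2}}-\x$ as the paper does. Your observation that the dual pairing naturally comes out as $\u^{k+1}-\mathbf{A}\x$ matches the paper's own derivation (and its later application at $\x=\u^{k-\frac{1}{2}}$, where the $\mathbf{A}\x$ form is the one actually used), so the $\u^{k+1}-\x$ in the lemma statement is a typo rather than a gap in your argument.
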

\begin{proof}

Since $f$ is $L_f$-smooth by Assumption~\ref{assum:L_f-smooth_convex}, we have
\[
f(\u^{k+1})\leq f(\x^k)+\innprod{\nabla f(\x^k)}{\u^{k+1}-\x^k}+\frac{L_f}{2}\norm{\u^{k+1}-\x^k}^2,
\]
and using  $f(\mathbf{A}\x)\geq f(\x^k)+\innprod{\nabla f(\x^k)}{\mathbf{A}\x-\x^k}$, further gives
\begin{equation}
\label{eq:inequality_smooth_f}
f(\u^{k+1})\leq f(\mathbf{A}\x)+\innprod{\nabla f(\x^k)}{\u^{k+1}-\mathbf{A}\x}+\frac{L_f}{2}\norm{\u^{k+1}-\x^k}^2.
\end{equation}
Also, subtracting $\mathbf{A}\u^{k+1}$ from both sides of \eqref{eq:opt_primal-dual_u},
multiplying    \eqref{eq:opt_primal-dual_y} by $\gamma\mathbf{A}$, and adding the obtained two equations while using \eqref{eq:opt_primal-dual_haty} lead to
\begin{align*}
& (\I-\mathbf{A})\u^{k+1} = -\mathbf{A} \bracket{ \u^{k+1}-\x^k+\gamma\bracket{\nabla f(\x^k)+\y^{k+1}}} -\gamma\mathbf{A}\mathbf{B}(\tau_{k-1}\beta_{k-1}\hat{\x}^k-\tau_k\hat{\x}^{k+1}) \\
& \overset{(*)}{=}  -\mathbf{A} \bracket{\u^{k+1}-\x^k+\gamma\bracket{\nabla f(\x^k)+\y^{k+1}}} - \frac{\gamma \tau}{\theta_k}\mathbf{A}\mathbf{B}( \hat{\x}^k-\hat{\x}^{k+1}) ,
\end{align*}
where in $(*)$ we used $\beta_{k-1}=\frac{\tau_k}{\tau_{k-1}},\tau_k=\frac{\tau}{\theta_k}$.  
Notice that, for the above derivation, we implicitly assume that $k\geq 2.$ However, with the definition of $\hat{\x}^1 := \x^1$ and the fact that $ \hat{\y}^1 = \tau_1 \mathbf{B}  \x^1$, we still have $(\I-\mathbf{A})\u^{2}   = -\mathbf{A} \bracket{\u^{2}-\x^1 +\gamma\bracket{\nabla f(\x^1)+\y^{2}}} - \frac{\gamma \tau}{\theta_1}\mathbf{A}\mathbf{B}( \hat{\x}^1-\hat{\x}^{2}) $.

Multiplying $\u^{k+\frac{1}{2}}-\x$ from both sides of the above equation and using the convexity of $h(\cdot)$ and the fact that $\u^{k+1}=\mathbf{A}\u^{k+\frac{1}{2}}$ we obtain
\begin{equation}
\label{eq:inequality_convex_g}
h(\u^{k+\frac{1}{2}})\leq h(\x)-\frac{1}{\gamma}\innprod{\u^{k+1}-\x^k+\gamma\bracket{\nabla f(\x^k)+\y^{k+1}}-\frac{\gamma\tau}{\theta_k}\mathbf{B}(\hat{\x}^{k+1}-\hat{\x}^k)}{\u^{k+1}-\mathbf{A}\x}.
\end{equation}
Since $\sigma_k=\frac{1}{\theta_{k+1}}$ and $\alpha_k=\frac{\theta_{k+1}}{\theta_k}-\theta_{k+1}$, using \eqref{eq:opt_primal-dual_x} and \eqref{eq:opt_primal-dual_hatx} leads to
\begin{equation}
\label{eq:transform_u_to_xhat}
\theta_k\hat{\x}^{k+1}=\u^{k+1}-(1-\theta_k)\u^k
\end{equation}
and
\[
\begin{aligned}
\hat{\x}^{k+1}-\hat{\x}^k&=\frac{1}{\theta_k}(\u^{k+1}-(1-\theta_k)\u^k)-\frac{1}{\theta_{k-1}}(\u^k-(1-\theta_{k-1})\u^{k-1})\\
&=\frac{1}{\theta_k}\u^{k+1}-\frac{1}{\theta_k}\bracket{(1-\theta_k)\u^k+\frac{\theta_k}{\theta_{k-1}}\u^k-\frac{\theta_k}{\theta_{k-1}}(1-\theta_{k-1})\u^{k-1}}\\
&=\frac{1}{\theta_k}\u^{k+1}-\frac{1}{\theta_k}\bracket{\u^k+(\frac{\theta_k}{\theta_{k-1}}-\theta_k)(\u^k-\u^{k-1})}\\
&\overset{\eqref{eq:opt_primal-dual_x}}{=}\frac{1}{\theta_k}(\u^{k+1}-\x^k).
\end{aligned}
\]
We implicitly assumed $k\geq 2$;  still we have $\hat{\x}^2-\hat{\x}^1 = \frac{1}{\theta_k} \left( \u^2 - \x^1 \right)$, recalling that $\hat{\x}^1 = \x^1$.
Thus, \eqref{eq:inequality_convex_g} becomes
\begin{equation}
\label{eq:inequality_convex_g_simplified}
h(\u^{k+\frac{1}{2}})\leq h(\x)-\frac{1}{\gamma}\innprod{\theta_k(\I - \frac{\gamma\tau}{\theta_k^2}\mathbf{B})(\hat{\x}^{k+1}-\hat{\x}^k)+\gamma\bracket{\nabla f(\x^k)+\y^{k+1}}}{\u^{k+1}-\mathbf{A}\x}.
\end{equation}
Combining \eqref{eq:inequality_smooth_f} and \eqref{eq:inequality_convex_g_simplified} yields: for any $\x\in\mathbb{R}^{m\times d}$ and  $\y\in\mathcal{C}^\perp$,
\[
\begin{aligned}
&f(\u^{k+1})+h(\u^{k+\frac{1}{2}})+\innprod{\y}{\u^{k+1}-\mathbf{A}\x}-f(\mathbf{A}\x)-h(\x)\\
&\leq \innprod{-(\y^{k+1}-\y)-\frac{1}{\gamma}\theta_k( \I - \frac{\gamma\tau}{\theta_k^2}\mathbf{B} )(\hat{\x}^{k+1}-\hat{\x}^k)}{\u^{k+1}-\mathbf{A}\x}+\frac{L_f}{2}\norm{\u^{k+1}-\x^k}^2,
\end{aligned}
\]
which, recalling that $\Phi(\x,\y)=f(\x)+\innprod{\y}{\x}$, completes the proof.
\end{proof}

\begin{lem}[Fundamental Inequality II]
\label{lem:fundamental_inequality_II}
In the setting of Lemma~\ref{lem:fundamental_inequality}, let $\frac{1}{\theta_{k-1}^2}-\frac{1-\theta_k}{\theta_k^2}=0$, that is, $\frac{1}{\theta_k}=\frac{1+\sqrt{1+4(\frac{1}{\theta_{k-1}})^2}}{2}$, with $\theta_1=1$ and $(1-\gamma L_f)\I-\frac{\gamma\tau}{\theta_k^2}\mathbf{B} \succeq \zeros$, for all $1\leq k\leq T-1$. Suppose Assumptions~\ref{assum:L_f-smooth_convex} and~\ref{assum:A_B_weight_matrices} hold. Then, for any $\x\in\mathcal{C}, \y\in\mathcal{C}^\perp$, we have  
\[
\begin{aligned}
\Phi(\u^T,\y)-\Phi(\x,\y)&\leq \frac{1}{T^2}\bracket{\frac{2}{\gamma}\norm{\u^1-\x}^2+\frac{2}{\tau\lambda_2(\mathbf{B})}\norm{\y}^2}.
\end{aligned}
\]
where $N$ is the overall number of iterations. 
\end{lem}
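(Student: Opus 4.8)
The plan is to convert the one-step estimate of Lemma~\ref{lem:fundamental_inequality} into a Nesterov/FISTA-type weighted recursion, telescope it over $k=1,\dots,T-1$, and divide by the accumulated weight $A_{T-1}:=1/\theta_{T-1}^2$. First I would specialize the free reference to $\x\in\mathcal{C}$. Then $\mathbf{A}\x=\x$ (by Assumption~\ref{assum:A_B_weight_matrices}(i), since $\Span{\ones}\subseteq\Null{\I-\mathbf{A}}$), while $h(\x)=\frac{1}{2\gamma}\norm{\x}^2_{\mathbf{A}-\mathbf{A}^2}=0$ because $(\mathbf{A}-\mathbf{A}^2)\x=\mathbf{A}(\I-\mathbf{A})\x=\zeros$; and $\mathbf{A}-\mathbf{A}^2\succeq\zeros$ gives $h(\u^{k+\frac12})\ge0$. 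Hence Lemma~\ref{lem:fundamental_inequality} reduces to an upper bound on the single gap $G_{k+1}:=\Phi(\u^{k+1},\y)-\Phi(\x,\y)$ by three pieces: a dual coupling $-\innprod{\y^{k+1}-\y}{\u^{k+1}-\x}$, a primal cross term $-\tfrac{1}{\gamma}\innprod{\theta_k(\I-\tfrac{\gamma\tau}{\theta_k^2}\mathbf{B})(\hat{\x}^{k+1}-\hat{\x}^k)}{\u^{k+1}-\x}$, and the smoothness residual $\tfrac{L_f}{2}\norm{\u^{k+1}-\x^k}^2$.

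Setting $A_k:=1/\theta_k^2$, the recursion $\tfrac{1-\theta_k}{\theta_k^2}=\tfrac{1}{\theta_{k-1}^2}$ is exactly $A_{k-1}=(1-\theta_k)A_k$, with $A_1=1$ and the convention $A_0=0$. Using the identity $\u^{k+1}=(1-\theta_k)\u^k+\theta_k\hat{\x}^{k+1}$ (a rearrangement of \eqref{eq:transform_u_to_xhat}) and the convexity of $\Phi(\cdot,\y)$ (legitimate since $f$ is convex and, for $\y\in\mathcal{C}^\perp$, $\Phi(\cdot,\y)=f(\cdot)+\innprod{\y}{\cdot}$), I would apply the one-step estimate in the standard FISTA combination so that the fixed reference effectively becomes the convex combination $(1-\theta_k)\u^k+\theta_k\x$; weighting by $A_k$ then yields a recursion of the form $A_kG_{k+1}\le A_{k-1}G_k+\mathcal{R}_k$, where $\mathcal{R}_k$ collects the (weighted) dual, primal-cross, and smoothness terms. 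The averaging structure of $\mathbf{A}$ and $h\ge0$ are what keep the extra terms controllable under this combination.

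Next I would telescope the two coupling terms inside $\mathcal{R}_k$. For the primal part I would use $\hat{\x}^{k+1}-\hat{\x}^k=\tfrac{1}{\theta_k}(\u^{k+1}-\x^k)$ together with the three-point (polarization) identity to rewrite the weighted cross term as a difference of $\tfrac{1}{2\gamma}\norm{\,\cdot-\x\,}^2_{\I-\frac{\gamma\tau}{\theta_k^2}\mathbf{B}}$-type quantities plus a negative quadratic $-\tfrac{1}{2\gamma}\norm{\u^{k+1}-\x^k}^2_{\I-\frac{\gamma\tau}{\theta_k^2}\mathbf{B}}$. For the dual part I would use $\y^{k+1}-\y^k=\tau_k\mathbf{B}\hat{\x}^{k+1}$ with $\tau_k=\tau/\theta_k$, together with $\mathbf{B}\x=\zeros$ (as $\x\in\mathcal{C}=\Null{\mathbf{B}}$, so $\mathbf{B}(\hat{\x}^{k+1}-\x)=\mathbf{B}\hat{\x}^{k+1}$), and the three-point identity in the $\mathbf{B}^{\dagger}$-seminorm to produce telescoping $\tfrac{1}{2\tau_k}\norm{\y^{\bullet}-\y}^2_{\mathbf{B}^{\dagger}}$ terms; bounding $\norm{\,\cdot\,}^2_{\mathbf{B}^{\dagger}}\le\tfrac{1}{\lambda_2(\mathbf{B})}\norm{\,\cdot\,}^2$ on $\mathcal{C}^\perp$ and invoking $\y^1=\zeros$ isolates the initial dual distance $\tfrac{1}{2\tau\lambda_2(\mathbf{B})}\norm{\y}^2$.

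Summing over $k$, the $A_kG_{k+1}$ and $A_{k-1}G_k$ telescope to $A_{T-1}G_T$ (the $G_1$ contribution vanishes since $A_0=0$), and the quadratic differences telescope to the initial distances. The leftover negative quadratics $-\tfrac{1}{2\gamma}\norm{\u^{k+1}-\x^k}^2_{\I-\frac{\gamma\tau}{\theta_k^2}\mathbf{B}}$, combined with the smoothness residual $\tfrac{L_f}{2}\norm{\u^{k+1}-\x^k}^2$, are nonpositive \emph{exactly} by the step-size condition $(1-\gamma L_f)\I-\tfrac{\gamma\tau}{\theta_k^2}\mathbf{B}\succeq\zeros$ and are discarded. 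This gives
\[
A_{T-1}\,G_T\ \le\ \tfrac{1}{2\gamma}\norm{\u^1-\x}^2+\tfrac{1}{2\tau\lambda_2(\mathbf{B})}\norm{\y}^2 .
\]
Finally, an induction on $\tfrac{1}{\theta_k}=\tfrac{1+\sqrt{1+4/\theta_{k-1}^2}}{2}$ with $\theta_1=1$ gives $1/\theta_k\ge(k+1)/2$, hence $A_{T-1}\ge T^2/4$; dividing through produces the factor $4/T^2$ and the stated constants $2/\gamma$ and $2/(\tau\lambda_2(\mathbf{B}))$. The main obstacle is the \emph{simultaneous}, metric-consistent telescoping of the primal cross term (whose weight matrix $\I-\tfrac{\gamma\tau}{\theta_k^2}\mathbf{B}$ drifts with $k$) and the dual coupling (living in the $\mathbf{B}^{\dagger}$-seminorm), and then checking that every residual quadratic is absorbed by the positive-semidefiniteness condition: aligning the $\tfrac{\gamma\tau}{\theta_k^2}\mathbf{B}$ coefficient across the primal quadratic and the smoothness residual is the delicate bookkeeping that makes the step-size choice in \eqref{eq:alg_param_set} work.
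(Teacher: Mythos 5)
Your proposal is correct and follows essentially the same route as the paper's proof: it sets up the weighted FISTA-type recursion with weights $A_k=1/\theta_k^2$ (the paper realizes this by applying Lemma~\ref{lem:fundamental_inequality} at both $\x$ and $\u^{k-\frac{1}{2}}$ and combining with weights $\theta_k$ and $1-\theta_k$, which is equivalent to your convex-combination formulation), telescopes the primal cross term in the $\I-\frac{\gamma\tau}{\theta_k^2}\mathbf{B}$ metric and the dual coupling in the $(\mathbf{B}+\mathbf{J})^{-1}$ (equivalently $\mathbf{B}^{\dagger}$) seminorm, absorbs the residual quadratics via the condition $(1-\gamma L_f)\I-\frac{\gamma\tau}{\theta_k^2}\mathbf{B}\succeq\zeros$, and concludes with $1/\theta_{T-1}^2\geq T^2/4$. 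The only cosmetic discrepancy is your intermediate weight $\frac{1}{2\tau_k}$ on the dual telescoping terms (the correct constant weight after the $\theta_k^2$ normalization is $\frac{1}{2\tau}$, which is what your final bound uses anyway).
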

\begin{proof}

Applying Lemma~\ref{lem:fundamental_inequality} with $\x\in \mathcal{C}$, we have (note that $\mathbf{A}\x=\x$ by Assumption~\ref{assum:A_B_weight_matrices})
\begin{equation}\label{eq:basic_inequality_k+1_to_opt}
\begin{aligned}
&\Phi(\u^{k+1},\y)-\Phi(\x,\y)+h(\u^{k+\frac{1}{2}})-h(\x)\\
&\leq -\innprod{\y^{k+1}-\y}{\u^{k+1}-\x}-\innprod{\frac{1}{\gamma}\theta_k (\I - \frac{\gamma\tau}{\theta_k^2}\mathbf{B})(\hat{\x}^{k+1}-\hat{\x}^k)}{\u^{k+1}-\x}+\frac{L_f}{2}\norm{\u^{k+1}-\x^k}^2.
\end{aligned}
\end{equation}
Likewise, with $\x=\u^{k-\frac{1}{2}}$ we have
\begin{equation}\label{eq:basic_inequality_k+1_to_k}
\begin{aligned}
&\Phi(\u^{k+1},\y)-\Phi(\u^k,\y)+h(\u^{k+\frac{1}{2}})-h(\u^{k-\frac{1}{2}})\\
&\leq -\innprod{\y^{k+1}-\y}{\u^{k+1}-\u^k}-\innprod{\frac{1}{\gamma}\theta_k (\I - \frac{\gamma\tau}{\theta_k^2} \mathbf{B})(\hat{\x}^{k+1}-\hat{\x}^k)}{\u^{k+1}-\u^k}+\frac{L_f}{2}\norm{\u^{k+1}-\x^k}^2.
\end{aligned}
\end{equation}
Let $V_k=\Phi(\u^k,\y)-\Phi(\x,\y)+h(\u^{k+\frac{1}{2}})-h(\x)$. Then, multiplying \eqref{eq:basic_inequality_k+1_to_k} by $1-\theta_k$ and \eqref{eq:basic_inequality_k+1_to_opt} by $\theta_k$, and combing the obtained equations yield
\begin{equation}
\label{eq:basic_inequality_theta_comb}
\begin{aligned}
&V_{k+1}-(1-\theta_k)V_k\\
&\leq -\innprod{\y^{k+1}-\y}{\u^{k+1}-\theta_k\x-(1-\theta_k)\u^k}\\
&-\frac{1}{\gamma}\innprod{ \theta_k (\I - \frac{\gamma\tau}{\theta_k^2} \mathbf{B})(\hat{\x}^{k+1}-\hat{\x}^k) }{\u^{k+1}-\theta_k\x-(1-\theta_k)\u^k}+\frac{L_f}{2}\norm{\u^{k+1}-\x^k}^2\\
&\overset{\eqref{eq:transform_u_to_xhat}}{=} -\theta_k\innprod{\y^{k+1}-\y}{\hat{\x}^{k+1}-\x}-\frac{\theta_k^2}{\gamma}\innprod{\hat{\x}^{k+1}-\hat{\x}^k}{\hat{\x}^{k+1}-\x}_{\I - \frac{\gamma\tau}{\theta_k^2} \mathbf{B}}+\frac{\theta_k^2L_f}{2}\norm{\hat{\x}^{k+1}-\hat{\x}^k}^2\\
& =  -\frac{\theta_k^2}{\tau}\innprod{\y^{k+1}-\y}{\y^{k+1}-\y^k}_{(\mathbf{B}+\mathbf{J})^{-1}}-\frac{\theta_k^2}{\gamma}\innprod{\hat{\x}^{k+1}-\hat{\x}^k}{\hat{\x}^{k+1}-\x}_{\I - \frac{\gamma\tau}{\theta_k^2} \mathbf{B}}+\frac{\theta_k^2L_f}{2}\norm{\hat{\x}^{k+1}-\hat{\x}^k}^2,
\end{aligned}
\end{equation}
where in the last equality we used    $\ones^\top\y^k=\zeros,\forall k\geq 1$ and the following result (recall  $\mathbf{B}\mathbf{J}=\mathbf{J}\mathbf{B}=\zeros$ and $\y\in\mathcal{C}^\perp$):
\[
\begin{aligned}
&\innprod{\y^{k+1}-\y}{\hat{\x}^{k+1}-\x}\\
&\qquad=\innprod{(\mathbf{B}+\mathbf{J})^{-1}(\mathbf{B}+\mathbf{J})(\y^{k+1}-\y)}{\hat{\x}^{k+1}-\x}\\
&\qquad=\innprod{\mathbf{B}(\mathbf{B}+\mathbf{J})^{-1}(\y^{k+1}-\y)}{\hat{\x}^{k+1}-\x}\\
&\qquad=\innprod{\y^{k+1}-\y}{\mathbf{B}(\hat{\x}^{k+1}-\x)}_{(\mathbf{B}+\mathbf{J})^{-1}}\\
&\qquad\overset{\eqref{eq:opt_primal-dual_y}}{=}\frac{\theta_k}{\tau}\innprod{\y^{k+1}-\y}{\y^{k+1}-\y^k}_{(\mathbf{B}+\mathbf{J})^{-1}}.
\end{aligned}
\]
Dividing $\theta_k^2$ from both sides of \eqref{eq:basic_inequality_theta_comb} leads to
\begin{equation}
\label{eq:basic_inequality_Lyapunov}
\begin{aligned}
&\frac{V_{k+1}}{\theta_k^2}-\frac{1-\theta_k}{\theta_k^2}V_k\\
&\leq -\frac{1}{\gamma}\innprod{\hat{\x}^{k+1}-\hat{\x}^k}{\hat{\x}^{k+1}-\x}_{\I - \frac{\gamma\tau}{\theta_k^2} \mathbf{B}}-\frac{1}{\tau}\innprod{\y^{k+1}-\y}{\y^{k+1}-\y^k}_{(\mathbf{B}+\mathbf{J})^{-1}}+\frac{L_f}{2}\norm{\hat{\x}^{k+1}-\hat{\x}^k}^2\\
& = -\frac{1}{2\gamma}\bracket{\norm{\hat{\x}^{k+1}-\hat{\x}^k}^2_{(1- \gamma L_f )\I - \frac{\gamma\tau}{\theta_k^2} \mathbf{B} }+\norm{\hat{\x}^{k+1}-\x}^2_{\I - \frac{\gamma\tau}{\theta_k^2} \mathbf{B} }-\norm{\hat{\x}^k-\x}^2_{\I - \frac{\gamma\tau}{\theta_k^2} \mathbf{B} }}\\
&~~~-\frac{1}{2\tau}\bracket{\norm{\y^{k+1}-\y^k}^2_{(\mathbf{B}+\mathbf{J})^{-1}}+\norm{\y^{k+1}-\y}^2_{(\mathbf{B}+\mathbf{J})^{-1}}-\norm{\y^k-\y}^2_{(\mathbf{B}+\mathbf{J})^{-1}}},
\end{aligned}
\end{equation}
where in the last equality we used 
\[
\mathbf{2\innprod{a-c}{b-c}_\G=\norm{a-c}^2_\G+\norm{b-c}^2_\G-\norm{a-b}^2_\G},~\forall \mathbf{a,b}\in\mathbb{R}^{m\times d}.
\]
Summing \eqref{eq:basic_inequality_Lyapunov} over $k$ from $1$ to $T-1$ yields
\begin{equation}
\label{eq:basic_inequality_Lyapunov_sum}
\begin{aligned}
&\frac{V_T}{\theta_{T-1}^2}-\frac{1-\theta_1}{\theta_1^2}V_1+\sum_{k=2}^{T-1}\bracket{\frac{1}{\theta_{k-1}^2}-\frac{1-\theta_k}{\theta_k^2}}V_k\\
&\leq -\frac{1}{2\gamma}\sum_{k=1}^{T-1}\norm{\hat{\x}^{k+1}-\hat{\x}^k}^2_{(1- \gamma L_f )\I - \frac{\gamma\tau}{\theta_k^2} \mathbf{B} }-\frac{1}{2\gamma}
\sum_{k=2}^{T-1} \gamma \tau ( \frac{1}{\theta_k^2} - \frac{1}{\theta_{k-1}^2} )\norm{\hat{\x}^k-\x}^2_\mathbf{B}\\
&~~~-\frac{1}{2\gamma}\bracket{\norm{\hat{\x}^T-\x}^2_{\I - \frac{\gamma\tau}{\theta_{T-1}^2 } \mathbf{B}} -\norm{\hat{\x}^1-\x}^2_{\I - \frac{\gamma\tau}{\theta_1^2} \mathbf{B}}}\\
&~~~-\frac{1}{2\tau}\bracket{\sum_{k=1}^{T-1}\norm{\y^{k+1}-\y^k}^2_{(\mathbf{B}+\mathbf{J})^{-1}}+\norm{\y^T-\y}^2_{(\mathbf{B}+\mathbf{J})^{-1}}-\norm{\y^1-\y}^2_{(\mathbf{B}+\mathbf{J})^{-1}}}.
\end{aligned}
\end{equation}

Recalling that $\frac{1}{\theta_{k-1}^2}-\frac{1-\theta_k}{\theta_k^2}=0$  and $\theta_1=1$, by induction it is easy to see that $k+1>\frac{1}{\theta_k}\geq\frac{k+1}{2}$ and thus $\frac{1}{\theta^2_k}-\frac{1}{\theta^2_{k-1}}=\frac{1}{\theta_k}>0$. Then, with $\hat{\x}^1=\x^1 :=\u^1,\y^1:= \zeros$, \eqref{eq:basic_inequality_Lyapunov_sum} can be simplified as
\begin{equation}
\label{eq:basic_inequality_Lyapunov_sum_simplified}
\begin{aligned}
\frac{T^2}{4}V_T+\sum_{i=1}^T\frac{1}{2\tau}\norm{\y^{k+1}-\y^k}^2_{(\mathbf{B}+\mathbf{J})^{-1}}+\frac{1}{2\gamma}\sum_{k=1}^{T-1}\norm{\hat{\x}^{k+1}-\hat{\x}^k}^2_{(1- \gamma L_f )\I - \frac{\gamma\tau}{\theta_k^2} \mathbf{B} }\\
\leq \frac{1}{2\gamma}\norm{\u^1-\x}^2_{\I - \frac{\gamma\tau}{\theta_1^2} \mathbf{B}} +\frac{1}{2\tau}\norm{\y}^2_{(\mathbf{B}+\mathbf{J})^{-1}}.
\end{aligned}
\end{equation}
Since $\rho\left( \left(\mathbf{B} + \mathbf{J} \right)^{-1} \right) = \frac{1}{\lambda_{\text{min}}\left( \mathbf{B} + \mathbf{J} \right)} = \frac{1}{\lambda_2(\mathbf{B})}$, $\mathbf{B}\succeq\zeros$ and $(1-\gamma L_f)\I-\frac{\gamma\tau}{\theta_k^2}\mathbf{B} \succeq \zeros$, we further have
\[
\begin{aligned}
\frac{T^2}{4}V_T\leq \frac{1}{2\gamma}\norm{\u^1-\x}^2 +\frac{1}{2\tau}\frac{\norm{\y}^2}{\lambda_2(\mathbf{B})},
\end{aligned}
\]
which, together with the fact that $V_k\geq\Phi(\u^k,\y)-\Phi(\x,\y)$, completes the proof.
\end{proof}

\subsection{Proof of Theorem~\ref{thm:sublinear_rate_g-pd}}\label{app:sublinear_rate_g-pd}
 {
Note that the primal-dual method~\eqref{alg:g-pd-ATC} is a special case of the update~\eqref{eq:opt_primal-dual} with the setting  $\theta_k \equiv 1, \alpha_k\equiv 0,\sigma_k\equiv 1,\tau_k\equiv \tau,\beta_k\equiv 1$. Furthermore,  $\gamma$ and $\tau$ defined in \eqref{eq:Alg-general-setting} satisfy $(1-\gamma L_f)\I-\gamma\tau\mathbf{B}\geq 0$ and $\x^k\equiv \u^k$.  Invoking~\eqref{eq:basic_inequality_Lyapunov_sum} with these parameter settings and $\x:= \x^\star,\, \y:=\y^\star=-\nabla f(\x^\star),\, \hat{\x}^1 := \x^1, \y^1:=\zeros,$ we have
\[
\begin{aligned}
\sum_{k=2}^{T}(\Phi(\x^k,\y^\star)-\Phi(\x^\star,\y^\star))\leq \frac{1}{2\gamma}\norm{\x^1-\x^\star}^2 +\frac{1}{2\tau}\frac{\norm{\y^\star}^2}{\lambda_2(\mathbf{B})},
\end{aligned}
\]
Let $\bar{\x}^T:=\frac{1}{T-1}\sum_{k=2}^T \x_k$. Using the convexity of $\Phi$, we furhter have
\[
\begin{aligned}
\Phi(\bar{\x}^T,\y^\star)-\Phi(\x^\star,\y^\star))& \leq \frac{1}{T-1}\bracket{\frac{1}{2\gamma}\norm{\x^1-\x^\star}^2 +\frac{1}{2\tau}\frac{\norm{\y^\star}^2}{\lambda_2(\mathbf{B})}} \\
& \leq \frac{1}{T-1} \bracket{\frac{L_f}{2}\norm{\x^1-\x^\star}^2 + \frac{1}{2\nu}\norm{\x^1-\x^\star}^2  + \frac{\nu }{2\eta(\mathbf{B})} \norm{\y^\star}^2}.
\end{aligned}
\]
Setting $\nu = \frac{\sqrt{\eta(\mathbf{B})}\norm{\x^1 - \x^\star}}{\nabla f(\x^\star)}$  yields \eqref{eq:rate-Alg-general}.  Finally, it follows from  \eqref{eq:basic_inequality_Lyapunov_sum} that $\hat{\x}^T=\x^T$ is bounded, for every $T\in \mathbb{N}_+$.  The rest of proof is to show that $\x^T\rightarrow \x^\star$, which follows the standard cluster point analysis, as in the proof of~\cite[Th.~1]{chambolle2011first} (refer also to \cite[Remark 3]{chambolle2016ergodic}).
}

\subsection{Proof of Theorem~\ref{thm:opt-pd-upperbound}}\label{proof-Th_Nest_acc}
Since $\gamma=\frac{\nu}{\nu L_f+T},\tau=\frac{1}{\nu T \lambda_m(\mathbf{B})}$ and $\frac{1}{k+1}<\theta_k<\frac{2}{k+1}$, we have
\begin{align}\label{eq:psd}
(1-\gamma L_f)\I-\frac{\gamma\tau}{\theta_k^2}\mathbf{B} \succeq \zeros,\forall 1\leq k\leq T-1.
\end{align}
Then, invoking Lemma~\ref{lem:fundamental_inequality_II} with $\x=\x^\star,\y=\y^\star=-\nabla f(\x^\star)$ and knowing that $\Phi(\u^k,\y^\star)-\Phi(\x^\star,\y^\star)=G(\u^k)\geq 0$ (cf., the relation~\eqref{eq:Bregman_Lagrangian_relation} in the main text), we obtain
\[
\begin{aligned}
G(\u^T)&\leq \frac{\frac{2}{\gamma}{R}_x+\frac{2}{\tau}\frac{{R}_y}{\lambda_2(\mathbf{B})}}{T^2}=\frac{2(L_f+ T/\nu ){R}_x+2\nu T \lambda_m(\mathbf{B}) \frac{{R}_y}{\lambda_2(\mathbf{B})}}{T^2}\\
&=\frac{2L_f{R}_x}{T^2}+\frac{ \frac{2}{\nu}{R}_x+ 2\nu\frac{{R}_y}{\eta(\mathbf{B})} }{T},
\end{aligned}
\]
where $R_x=\norm{\u^1-\x^\star}^2,R_y=\norm{\nabla f(\x^\star)}^2$. Setting $\nu=\sqrt{\eta(\mathbf{B})}$ we have
\[
G(\u^T)\leq\frac{2L_f}{T^2}R_x+\frac{2}{\sqrt{\eta(\mathbf{B})}T}(R_x+R_y),
\]
which, together with the time ($1+t_c$) needed at each iteration, gives the overall time complexity. 

If we set\footnote{Note that this requires accurate estimates on the ratio of $R_x/R_y$, which, indeed, plays a key role of trade-off parameter balancing  gradient computation steps and communication steps.} $\nu=\sqrt{\frac{\eta(\mathbf{B})R_x}{R_y}}$, we have $$G(\u^T,\x^\star) \leq \frac{2L_f{R}_x}{T^2}+\frac{ 4 \sqrt{{R}_x  {R}_y} }{\sqrt{ \eta(\mathbf{B})}T},$$ which matches the lower bound also with respect to $R_x,R_y$. 

In the following, we show that both consensus error and the absolute value of the objective error will converge at the same rate as the Bregman distance.
Invoking Lemma~\ref{lem:fundamental_inequality_II} with $\x=\x^\star$, $\gamma=\frac{\nu}{\nu L_f+T}$, $\tau=\frac{1}{\nu T \lambda_m(\mathbf{B})}$ and $\nu = \sqrt{\eta(\mathbf{B})}$, we have
\[
\begin{aligned}
f(\u^T)-f(\x^\star)+\innprod{\u^T}{\y}=\Phi(\u^T,\y)-\Phi(\x^\star,\y)&\leq \phi(\norm{\y})
\end{aligned}
\]
where $\phi(\cdot):=\frac{2L_f{R}_x}{T^2}+\frac{ \frac{2}{\sqrt{\eta}}({R}_x+ \bracket{\cdot}^2)}{T}$.

Now, setting $\y=2\frac{\tilde{\u}^T}{\norm{\tilde{\u}^T}}\norm{\y^\star}$ where $\tilde{\u}^T=(\I-\avector)\u^T$, we have
\[
f(\u^T)-f(\x^\star)+2\norm{\y^\star}\norm{\tilde{\u}^T}\leq\phi(2\norm{\y^\star})
\]
Also, since $f(\u^T)-f(\x^\star)+\innprod{\u^T}{\y^\star}\geq 0$, we have $f(\u^T)-f(\x^\star)\geq -\norm{\y^\star}\norm{\tilde{\u}^T}$. Thus, combining the above two inequalities yields
\[
\norm{\tilde{\u}^T}\leq \frac{\phi(2\norm{\y^\star})}{\norm{\y^\star}}~\text{and}~\abs{f(\u^T)-f(\x^\star)}\leq \phi(2\norm{\y^\star}).
\]
\hfill $\square$

\subsection{Proof of Theorem~\ref{thm:opt-pd-upperbound-cheby}}\label{sec:che}
 \label{proof_Th_Cheb_acc}
Following the similar lines in \cite[Theorem~4]{scaman17optimal}, we first consider the normalized Laplacian $\Lap$ has a spectrum in $[1-c_1^{-1}, 1+c_1^{-1}].$  According to \cite{scaman17optimal,auzinger2011iterative}, the Chebyshev polynomail $P_K(x) = 1 - \frac{T_K(c_1(1-x))}{T_K(c_1)}$ is the solution of the following problem
\[
\min_{p \in \mathbb{P}_K, p(0) = 0} \max_{x \in [1-c_1^{-1}, 1+c_1^{-1}]} \abs{p(x){}-1}.
\]
As a result, we have
\begin{align}\label{eq:new_spect}
\max_{x \in [1-c_1^{-1}, 1+c_1^{-1}]} \abs{P_K(x)-1} \leq 2\frac{c_0^K}{1+c_0^{2K}}.
\end{align}
Define $\delta = 2\frac{c_0^K}{1+c_0^{2K}}.$  Since Algorithm~\ref{alg:opt_primal-dual} amounts to an instance of Procedure~\eqref{eq:opt_primal-dual} with $\mathbf{A}=\I-c_2 \cdot P_K(\Lap)$ and $\mathbf{B}=P_K(\Lap)$, its convergence proof follows the same lines as that of Theorem~\ref{thm:opt-pd-upperbound} with the following properties of $P_K(\Lap)$: i) $P_K(\Lap)$ is symmetric; ii) according to \eqref{eq:new_spect}, $\zeros\preceq \I-c_2 \cdot P_K(\Lap) \preceq \I$ and $P_K(\Lap)\succeq \zeros$, and $\Null{P_K(\Lap)} = \mathcal{C};$ iii) The values given for $\gamma$ and $\tau$ in Algorithm~\ref{alg:opt_primal-dual} ensures that $(1-\gamma L_f)\I-\frac{\gamma\tau}{\theta_k^2} P_K(\Lap) \succeq \zeros$, analogus to \eqref{eq:psd}.  Therefore we have

\[
\begin{aligned}
G(\u^T) & \leq \frac{\frac{2}{\gamma}{R}_x+\frac{2}{\tau}\frac{{R}_y}{\lambda_{\text{min}}(P_K(\Lap) + \mathbf{J})}}{T^2} \leq \frac{2(L_f+ T/\nu ){R}_x+ 2 \nu T (1+ \delta) \frac{{R}_y}{1- \delta }}{T^2}\\
&  = \frac{2L_f{R}_x}{T^2}+\frac{ \frac{2}{\nu}R_x + 2\,\nu \,R_y \frac{1+\delta}{1-\delta}  }{ N}
\overset{(*)}{=} \frac{2L_f{R}_x}{T^2}+\frac{ 4 \sqrt{{R}_x  {R}_y} }{ N} \sqrt{ \frac{1+\delta}{1-\delta}},
\end{aligned}
\]
where (*) requires a specified $\nu$.  Finally, we have
\[
\sqrt{ \frac{1+\delta}{1-\delta}} =  \left( 1+ \left( \frac{1-\sqrt{\eta}}{1+\sqrt{\eta}} \right)^K \right) \Big/ \left( 1- \left( \frac{1-\sqrt{\eta}}{1+\sqrt{\eta}} \right)^K \right).
\]
Taking $K = \ceil*{\frac{1}{\sqrt{\eta}}}$, we have
\begin{align*}
& \left( \frac{1-\sqrt{\eta}}{1+\sqrt{\eta}} \right)^{\ceil*{\frac{1}{\sqrt{\eta}}}} = \left( 1- \frac{2\sqrt{\eta}}{1+\sqrt{\eta}} \right)^{\ceil*{\frac{1}{\sqrt{\eta}}}} \leq \left( 1- \frac{2}{1+\lceil \frac{1}{\sqrt{\eta}} \rceil} \right)^{\ceil*{\frac{1}{\sqrt{\eta}}}} \\
& \overset{(*)}{\leq} \left( 1- \frac{1}{\lceil \frac{1}{\sqrt{\eta}} \rceil} \right)^{\ceil*{\frac{1}{\sqrt{\eta}}}} < e^{-1},
\end{align*}
where (*) is due to the fact that $\ceil*{\frac{1}{\sqrt{\eta}}} \geq 1$.
Thus, we have $\sqrt{ \frac{1+\delta}{1-\delta}} \leq \frac{1+e^{-1}}{1 - e^{-1}} \leq 2.5$, which, together with the time $(1+K\tau_c)$ needed at each iteration, gives the time complexity as announced.
\hfill $\square$

\section{Additional numerical results  }\subsection{Supplementary plots for the experiments in Section~\ref{sec:simulation}}
\label{sec:supp_sim_FEM}

This section provides additional numerical results, complementing those reported in the paper (cf. Section~\ref{sec:simulation}). In Figure \ref{fig:linreg-fem} we   plot the FEM-metric (\ref{eq:FEM-metric})  versus the overall number of communications and   computations performed by each agent (left panel), the number of communications (middle panel), and the number of  computations (right panel).  The comparison of the different schemes suggests  to the same conclusions as in Sec.\ref{sec:simulation};  the only exception is that in the FEM-metric, the stochastic algorithm--DPSGD--does not present a significant advantage with respect to  the non-accelerated algorithms--NEXT, DIGing and EXTRA.

\begin{figure*}[ht]
\centering{\includegraphics[scale=0.4]{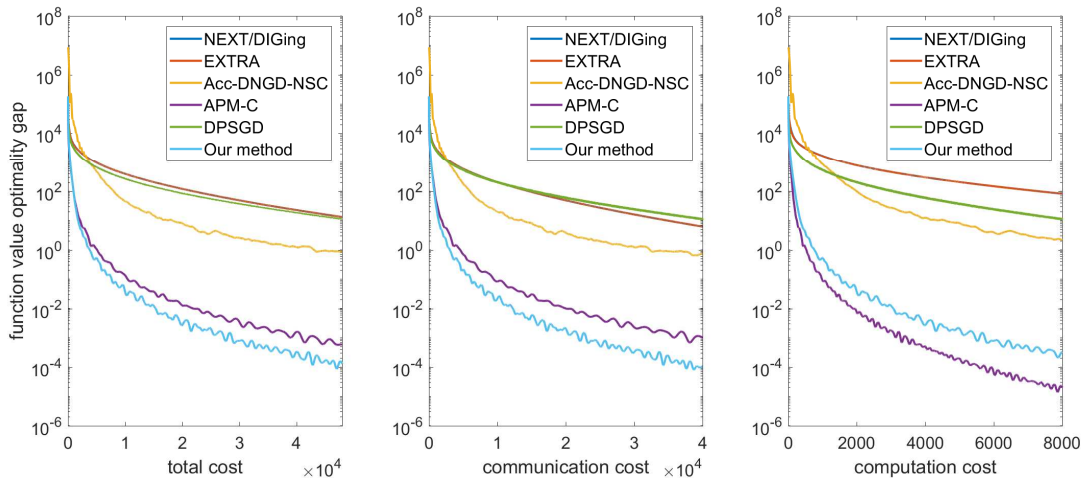}}
\caption{Comparison of distributed first-order gradient algorithms for solving the decentralized linear regression problem in terms of the traditional FEM-metric.}
\label{fig:linreg-fem}
\end{figure*}

\subsection{Decentralized logistic regression}\label{fig:logistic}

\begin{figure*}[ht]
\centering{\includegraphics[scale=0.35]{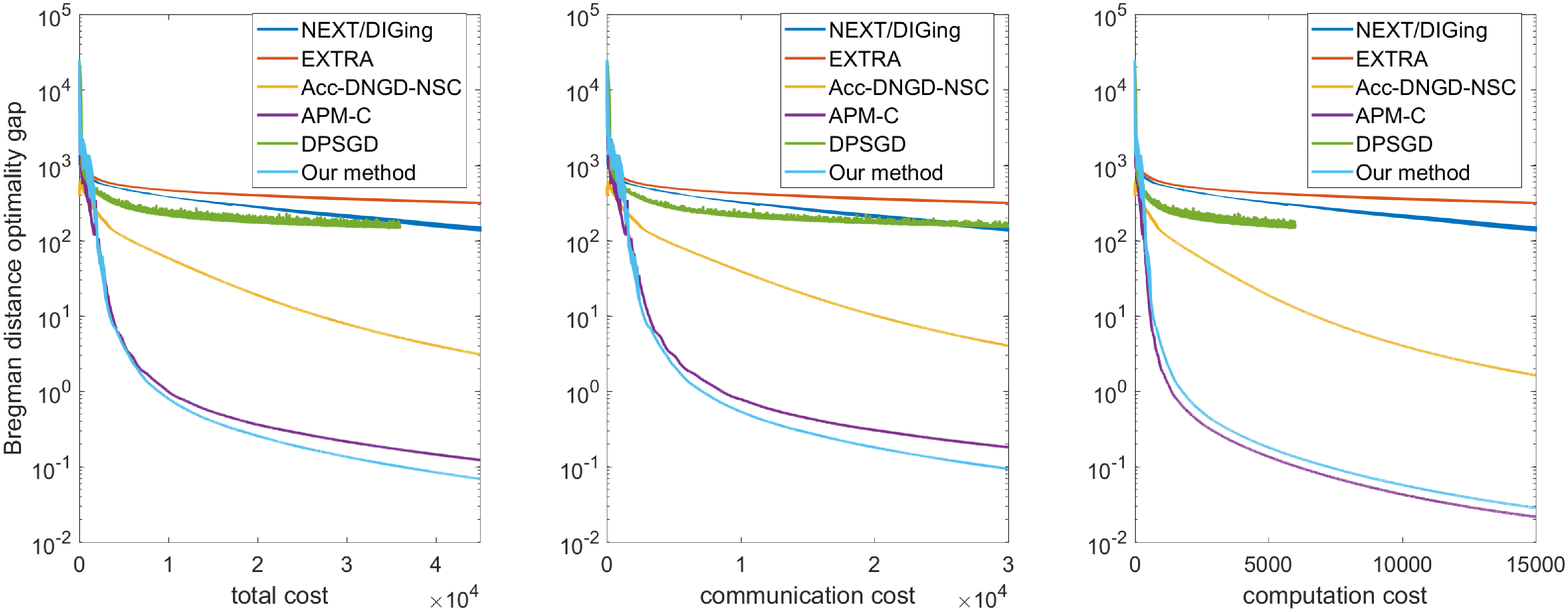}}
\centering{\includegraphics[scale=0.35]{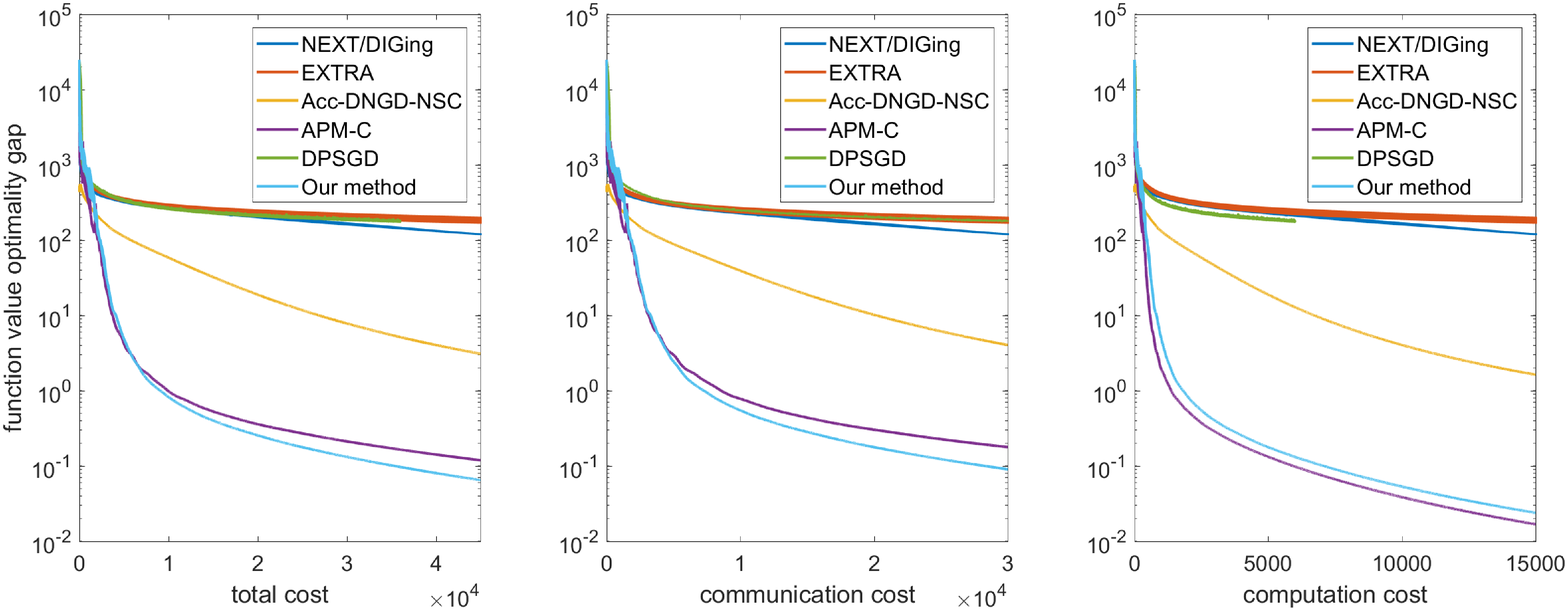}}

\vspace{-0.2cm}
\caption{Comparison of distributed first-order gradient algorithms for solving the decentralized logistic regression problem in terms of both the Bregman distance and the traditional FEM-metric.}
\label{fig:logreg}
\end{figure*}
To further verify the effectiveness of our proposed scheme, we also include a decentralized logistic regression task on the Parkinson's Disease Classification Data Set\footnote{The data set is available at \href{https://archive.ics.uci.edu/ml/datasets/Parkinson\%27s+Disease+Classification}{https://archive.ics.uci.edu/ml/datasets/Parkinson\%27s+Disease+Classification}}.  We preprocess the data by deleting the first column-id number, rescaling feature values to the range $(0,1)$, and changing the label notation from $\{1,0\}$ to $\{1,-1\}$.  We denote the processed data set as $\{(u_i,\, y_i)\}_{i\in \mathcal{D}}$, where  $u_i\in \mathbb{R}^d$ is the feature vector and $y_i\in \{1,-1\}$ is the  label of the $i$-th observation.
We simulated a network of 60 agents, generated by the Erd\"{o}s-R\'{e}Tyi model with the parameter of connection probability as 0.1.  Then, we distributed the data set to all agents evenly, corresponding to a partition of the index set $\mathcal{D}$ across agents as $\mathcal{D} = \cup_{i=1}^{60} \mathcal{D}_i$.  The decentralized logistic regression problem reads
\[
\min_{x\in \mathbb{R}} \sum_{i=1}^m \sum_{j\in \mathcal{D}_i} \log\left(1+\exp(-y_j u_j^\top x)\right).
\]
We estimated $L_f$ for the problem as $L_f=24$ and tuned the free parameters of the simulated algorithms  manually to achieve the best practical performance for each algorithm. This leads to the following choices:   \textbf{i)}  the step size of NEXT/DIGing is set to $0.01$;  \textbf{ii)} the step size of EXTRA is set to  $0.005;$  \textbf{iii)} for Acc-DNGD-NSC, we used the fixed step-size rule, with   $\eta = 0.01/L_f$;   \textbf{iv)} for APM-C, we set (see notation therein) $T_k = \ceil{c\cdot ({\log k}/{\sqrt{1-\sigma_2(\mathbf{W}))}}}$, with  $c = 0.2$ and $\beta_0 = 10^4;$  \textbf{v)} for DPSGD, we set its step size as $0.001$ and the portion of batch size to the full local data set as $20\%$ and for \textbf{vi)} for our algorithm, we set $\nu = 1500$ and $K = 2.$  

The experiment result is reported in Figure~\ref{fig:logreg}.  The first row of panels shows the Bregman distance versus  the total cost (left panel), the communication cost (middle panel), and the gradient computation cost (right panel). The second row plots the FEM-metric versus the same quantities as in the first row.  Both the communication time unit and the computation time unit for a full epoch of local data is set as 1.  For DPSGD, the computation time unit is scaled in proportion to the local batch size.  The only existing algorithm that has a comparable performance with the proposed OPTRA is APM-C.  As discussed in the task of decentralized linear regression, APM-C performs better than OPTRA in terms of the number of gradient computations, while suffers from high communication cost.  In terms of the overall number of communications and computations, OPTRA outperforms all the other simulated schemes under the above setting.

\subsection{Different ratio of communication time versus computation time}
In all the previous experiments, we set both the communication time unit and the computation time unit for a full epoch of local data as 1.  To incorporate scenarios where a full epoch computation of local gradient is much more expensive than one communication process, we re-conducted the previous experiments in the setting where the communication time unit is 1 while the computation time unit for a full epoch of local data is 5.  Note that all the process of data generation and parameters tunings are the same as in the Sec. \ref{fig:logistic}.  The results are reported in Figure~\ref{fig:rescale_linreg} and Figure~\ref{fig:rescale_logreg} respectively for decentralized linear regression problem and the decentralized logistic regression problem.  It can be seen that OPTRA outperforms all the other simulated schemes in terms of the overall number of communications and computations, especially when the communication cost is not negligible.

\begin{figure*}[!ht]
\centering{\includegraphics[scale=0.4]{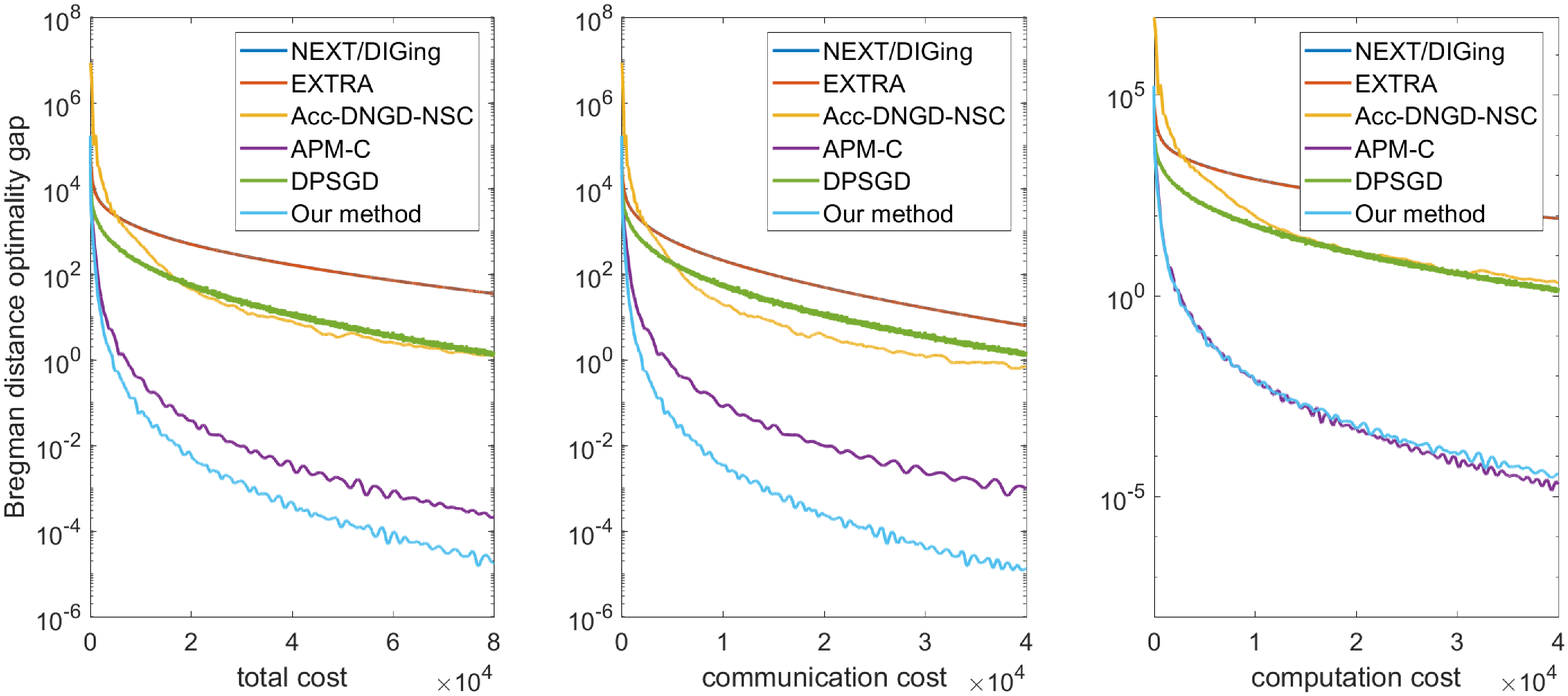}}
\centering{\includegraphics[scale=0.4]{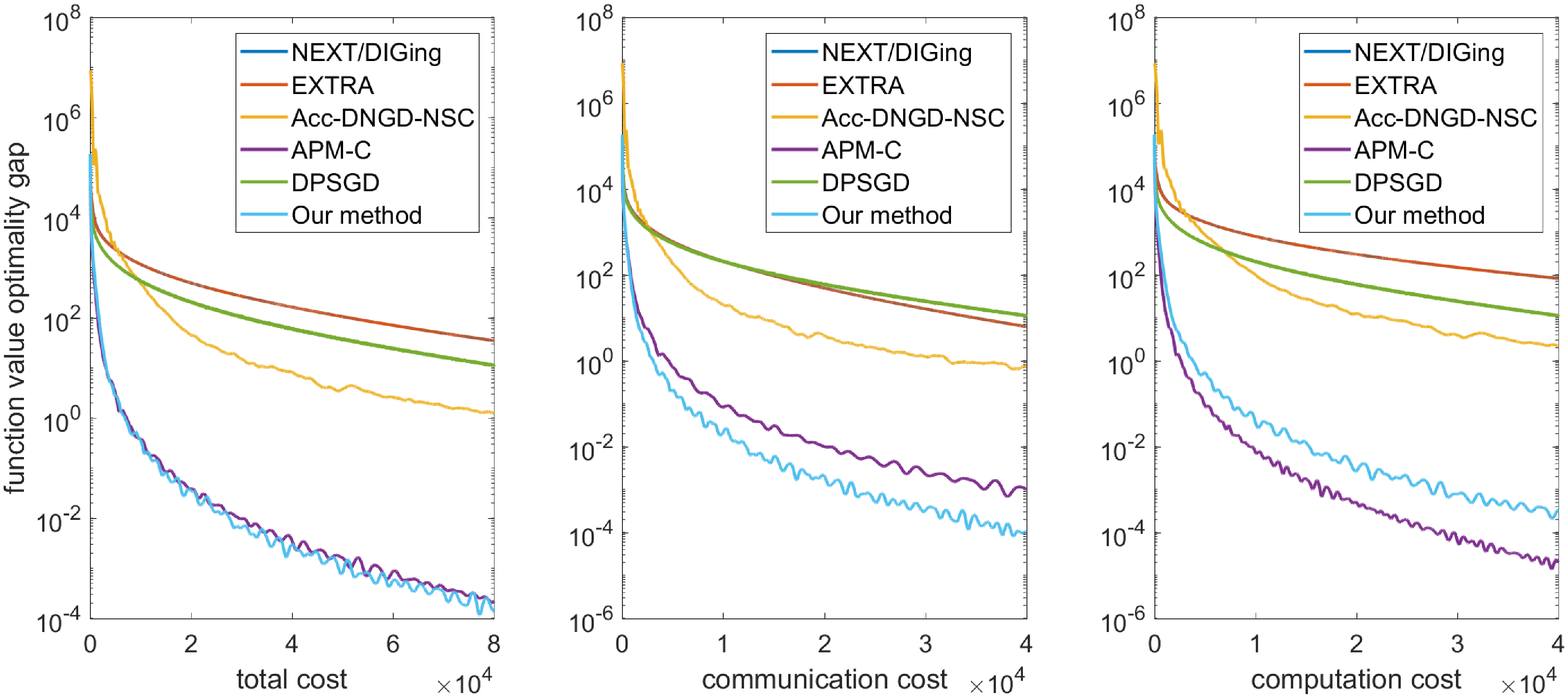}}

\vspace{-0.2cm}
\caption{Comparison for distributed algorithms for solving the decentralized linear regression problem with the communication time unit being ``$1$'' and the computation time unit ``$5$'' for a full epoch of local data.}
\label{fig:rescale_linreg}\vspace{-0.4cm}
\end{figure*}

\begin{figure*}[!ht]
\centering{\includegraphics[scale=0.35]{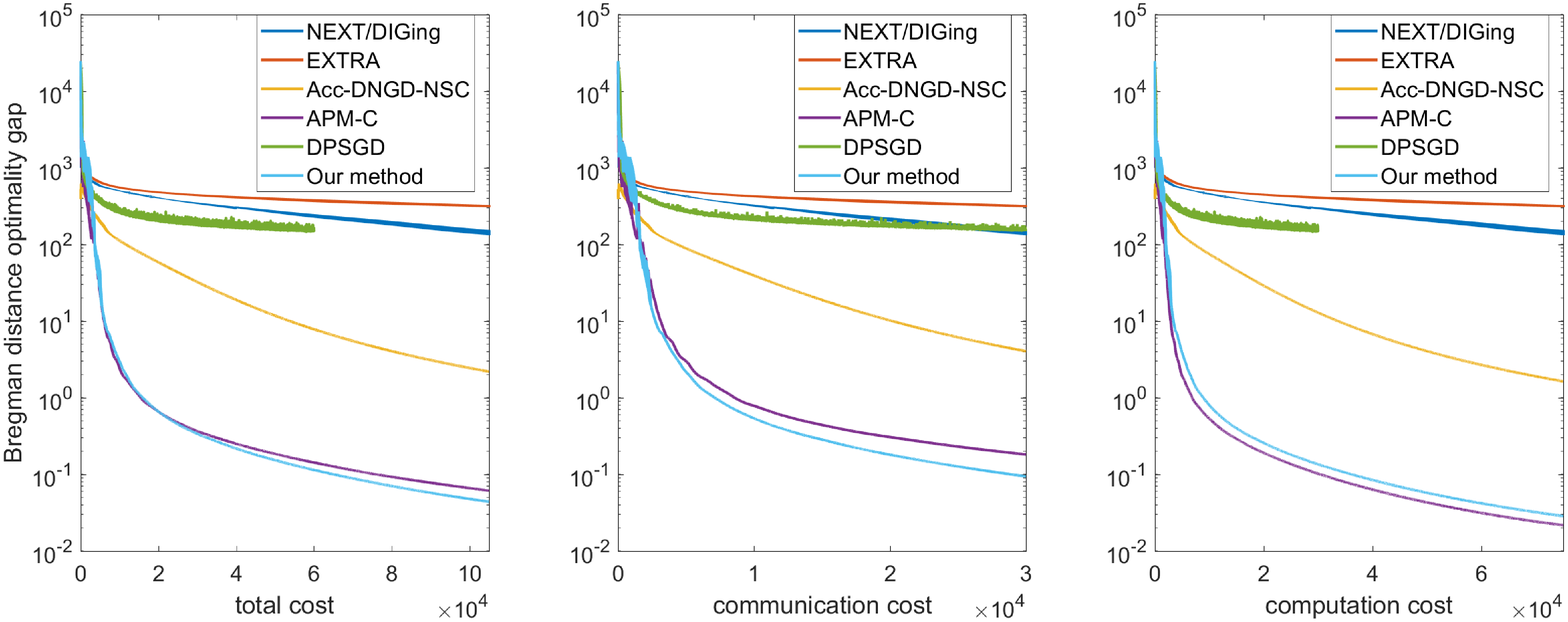}}
\centering{\includegraphics[scale=0.35]{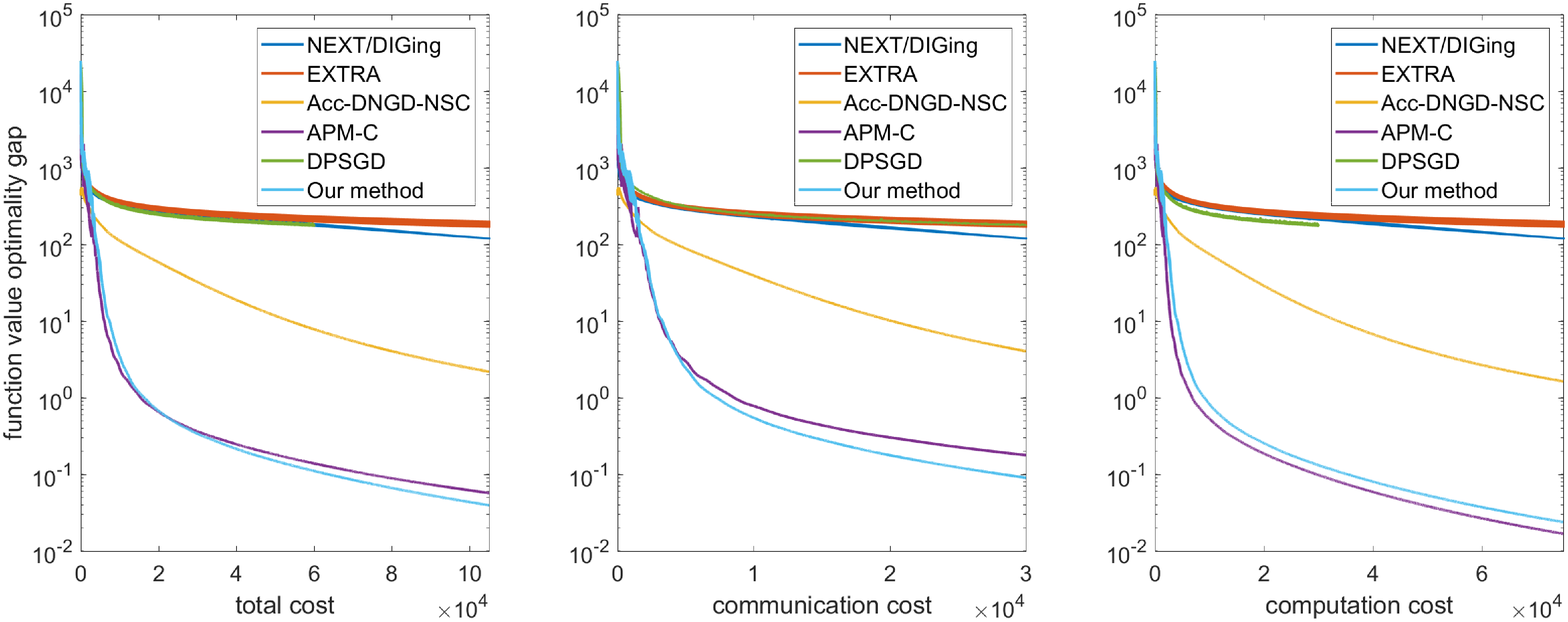}}

\vspace{-0.2cm}
\caption{Comparison for distributed algorithms for solving the decentralized logistic regression problem, in the setting where the communication time unit is 1 while the computation time unit for a full epoch of local data is 5.}
\label{fig:rescale_logreg}\vspace{-0.4cm}
\end{figure*}

\section{Additional Comments}
\label{sec_discussion_scaman}
\cite{scaman17optimal} presented optimal algorithms for decentralized optimization of strongly convex smooth functions. The functions considered in our paper are just convex (smooth). However, adding a small regularization (of the order of $\epsilon \norm{\x}^2/R^2$), the problem becomes strongly convex and the results of \citep{scaman17optimal}
apply. In so doing, one can show that the method in \citep{scaman17optimal} achieves an $\epsilon$-solution in $\Upbound{\left(1+\frac{1}{\sqrt{\eta}}\, \tau_c\right)\sqrt{\frac{L_f
R^2}{\epsilon}}\log\left(\frac{1}{\epsilon}\right)}$. However, this requires an accurate estimate of $R$ and the resulting rate differs from the lower bound for smooth convex functions by an extra log-factor ``$\log{1/\epsilon}$'', meaning that this is not optimal as our scheme. Also, more importantly, i) \cite{scaman17optimal}
requires the computation in closed form of the gradient of the Fenchel conjugate while our scheme does not have
this limitation; ii) the Lipschitz constant of the gradient of the Fenchel conjugate in the setting above will scale as
$1/\epsilon$ and thus the condition number of the dual problem is $O(1/\epsilon)$, which becomes arbitrarily large as $\epsilon$
decreases. As a consequence, \citep{scaman17optimal} significantly slow-downs in practice. This motivates our
design of distributed algorithms specifically for convex (but not strongly convex) functions.

\end{document}